\title{Quantitative non-divergence and lower bounds
       for points with algebraic coordinates near manifolds}
\author{Alessandro Pezzoni
\footnote{Supported by EPSRC International Doctoral Scholars IDS Grant
EP/N509802/1.}}
\date{}
\begin{document}
\maketitle

\begin{abstract}
  Point counting estimates are a key stepping stone to various results in metric
  Diophantine approximation. In this paper we use the quantitative
  non-divergence estimates originally developed by Kleinbock and Margulis to
  improve lower bounds by Bernik, G\"{o}tze et al. for the number of points with
  algebraic conjugate coordinates close to a given manifold. In the process, we
  also improve on a Khinchin-Groshev-type theorem for a problem of constrained
  approximation by polynomials.
\end{abstract}

\listoffixmes

\section{Introduction}%
\label{sec:introduction}

In the course of developing his classification of real numbers, Mahler
conjectured that for every $\varepsilon > 0$ and Lebesgue almost every $x \in
\R$ the inequality
\begin{equation}
  \label{eq:mahler_conjecture}
  \abs{P(x)} < \polyheight{P}^{-n-\varepsilon}
\end{equation}
has at most finitely many solutions $P \in \Z[X]$ with
$\deg(P) \leq n$, where $\polyheight{P}$ denotes the \emph{(naive) height} of
$P$, i.e. the maximum of its coefficients in absolute value. This was later
proved by Sprind\v{z}uk \cite{Sprin1969}, and it marked the beginning of the
theory of Diophantine Approximation of dependent quatities, i.e. the study of
the Diophantine properties of points bound to a given manifold.

It is then natural to wonder about the Diophantine properties of the solutions
to a system of symultaneous equations of type \eqref{eq:mahler_conjecture} in
multiple independent variables $x_0,\dotsc,x_m \in \R$, i.e.
\begin{equation}
  \label{eq:mahler_higher_dimension}
  \max_{0 \leq k \leq m} \abs{P(x_k)} < \approxf\paren{\polyheight{P}}
\end{equation}
for some function $\approxf \colon \R^+ \to \R^+$, with solutions in integer
polynomials $P$ of degree between $m+1$ and $n$.
Indeed, in \cite[Problem~C]{Sprin1965} Sprind\v{z}uk conjectured that the
maximum $v > 0$ for which \eqref{eq:mahler_higher_dimension} with
$\approxf(Q) = Q^{-v}$ has infinitely many solutions for all $x$ in a set of
positive measure is
\begin{equation}
  \label{eq:exponent_sprindzuk_conjecture}
  v = \frac{n+1}{m+1} - 1,
\end{equation}
and this was later proved by Bernik in \cite{Berni1977}.

The problem \eqref{eq:mahler_higher_dimension} was then considered for abitrary
$\approxf$ and
$m = 1$ in \cite{BerniB1997}, as well as for the case where the variables $x_k$
can also take complex or $p$-adic values in \cite{BerniBD2008,BerniBD2010}
($m = 2$) and \cite{Budar2011a,Budar2011b} (arbitrary $m$). In particular, the
following result is contained in the preprint \cite{BeresBB2017}, which deals
with the more general case of systems of linear forms in dependent variables,
i.e.
\begin{equation}
  \label{eq:small_linear_forms}
  \max_{0 \leq k \leq m}
    \abs{\bm{a} \cdot \bm{f}_k(\bm{x}_k)}
    < \approxf\paren{\norm{\bm{a}}}
\end{equation}
with solutions in $\bm{a} \in \Z^{n+1}$, where $\bm{f_k} \colon U_k \to \R^n$
are sufficiently regular maps defined on open balls $U_k \subset \R^{d_k}$ and
$\approxf \colon \R^+ \to \R^+$ as before. Here and throughout this paper
$\norm{\cdot}$ will denote the sup norm $\R^n$ unless otherwise specified,
although note that most of the results presented here still hold with minor
modification for any other choice of norm.

\begin{theorem}[{\cite[Theorem~1]{BeresBB2017}}]
  \label{thm:independent_polynomial_khinchin}
  Consider integers $n > m \geq 0$, a function $\approxf \colon \R^+ \to \R^+$,
  and a ball $B \subset \R^{m+1}$. Let $\Leb{}$ be the Lebesgue measure on
  $\R^{m+1}$. Then
  \[
    \Leb{\dualappr_{n,m+1}(\approxf) \cap B} =
    \begin{cases}
      0 & \text{if } S_{n,m+1}(\approxf) < \infty \\
      \Leb{B} & \text{if } \approxf \text{ is monotonic and }
        S_{n,m+1}(\approxf) = \infty
    \end{cases}
  \]
  where $\dualappr_{n,m+1}(\approxf)$ denotes the set of
  $(x_0,\dotsc,x_m) \in \R^{m+1}$ which satisfy
  \eqref{eq:mahler_higher_dimension} for infinitely many polynomials $P$ of
  degree up to $n$, and where
  \[
    S_{n,m+1}(\approxf)
    \coloneqq \sum_{Q = 1}^\infty Q^{n-m-1} \approxf^{m+1}(Q).
  \]
\end{theorem}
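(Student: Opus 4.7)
My plan is to treat the convergence and divergence halves separately, each by a Borel--Cantelli argument, with the Kleinbock--Margulis quantitative non-divergence estimate handling the difficult cases.

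\emph{Convergence half.} Assume $S_{n,m+1}(\psi) < \infty$. For each $Q \ge 1$ let $A_Q \subset B$ consist of those $\bm{x}$ for which some $P \in \Z[X]$ with $\deg P \le n$ and $\polyheight{P} = Q$ satisfies $\max_k \abs{P(x_k)} < \psi(Q)$. The convergence Borel--Cantelli lemma reduces the claim to showing $\sum_Q \Leb{A_Q} < \infty$. For fixed $P$, the defining inequality factors across the coordinates, and on the region where $\abs{P'(x_k)} \gtrsim Q$ for every $k$ a classical one-variable estimate gives a factor $\lesssim \psi(Q)/Q$ per coordinate, hence $\lesssim (\psi(Q)/Q)^{m+1}$ in total. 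Multiplying by the $\asymp Q^n$ polynomials of height exactly $Q$ yields a bound on $\Leb{A_Q}$ of order $Q^{n-m-1}\psi^{m+1}(Q)$, which is summable by hypothesis.

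\emph{Degenerate polynomials.} The main obstacle in the convergence direction is the contribution of polynomials whose derivative fails to satisfy $\abs{P'} \gtrsim Q$ on most of $B$. In keeping with the paper's theme, I would recast the defining inequality as a short-vector statement for a family of unimodular lattices $\Lambda_{\bm{x}}(Q) \subset \R^{n+1}$ parametrised by $\bm{x}$ and $Q$, and then invoke the Kleinbock--Margulis non-divergence bound to control the measure of parameters $\bm{x}$ admitting such a short vector, uniformly in the derivative behaviour. A dyadic decomposition in $\abs{P'}$ then glues the two regimes into a single summable total.

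\emph{Divergence half.} Assume now that $\psi$ is monotonic and $S_{n,m+1}(\psi) = \infty$. The target is full measure, which would follow from the divergence form of Borel--Cantelli once pairwise quasi-independence of the events $\{\bm{x} \in E(P)\}$ is established. The key observation is that if $\bm{x} \in E(P_1) \cap E(P_2)$ with $\polyheight{P_i} \asymp Q$, then $(P_1-P_2)(x_k)$ is small for every $k$, so the overlap count reduces to a question about integer polynomials of bounded height lying close to a prescribed one. I expect this to be the hardest step: verifying the overlap bound, or equivalently constructing a sufficiently dense regular system of algebraic tuples $(\alpha_0,\dots,\alpha_m)$ built from roots of polynomials of bounded height, again demands precise control on the measure of short-vector sets and therefore rests on the same non-divergence machinery used in the convergence half.
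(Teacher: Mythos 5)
Note first that the paper does not give its own proof of this statement: it is quoted verbatim from \cite{BeresBB2017} and used as a black box (for instance in Lemma~\ref{lm:manifold_khinchin_point_convergence}). What the paper does build, in Sections~\ref{sec:tailored_polynomials}--\ref{sec:proof_of_the_main_theorem}, is the machinery for the manifold analogue, and that machinery mirrors the proof in \cite{BeresBB2017}, so I will read your sketch against it. Your convergence outline is the standard one and has the right shape, though bear in mind that the ``degenerate polynomial'' regime you flag is where essentially all of the work sits; a dyadic decomposition in $\abs{P'}$ together with quantitative non-divergence is the right instinct, but your sketch stops well short of turning that into a bound that is uniform in $P$ and summable in $Q$.

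The more serious gap is in the divergence half. You hang the argument on establishing pairwise quasi-independence of the events $\{\bm{x}\in E(P)\}$, but on its own that buys nothing: the divergence form of Borel--Cantelli also needs $\sum_Q \Leb{A_Q\cap B'}=\infty$ (indeed $\gg \sum_Q \Leb{A_Q}$) for every sub-ball $B'$, and that is not automatic from $S_{n,m+1}(\psi)=\infty$, which only controls the sum of the \emph{upper} bounds $Q^{n-m-1}\psi^{m+1}(Q)$. Prior to any overlap count one needs an existence statement: for every large $Q$, most $\bm{x}$ admit a polynomial of height $\asymp Q$ with controlled derivative and --- so as to produce distinct algebraic tuples --- irreducible. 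That statement is the crux of the theorem, and your proposal does not address it. The paper obtains it via tailored polynomials (Corollary~\ref{cor:tailored_polynomials}): Minkowski's second theorem produces $n+1$ linearly independent small vectors of the lattice $M_{\bm{f}}\Z^{n+1}$, Eisenstein's criterion is used to convert them into irreducible polynomials of comparable height, and the Kleinbock--Margulis non-divergence estimate (Theorem~\ref{thm:bound_over_an_interval}) bounds the exceptional set of $\bm{x}$ on which the lattice has a vector so short that Minkowski gives no height control. With that existence in hand, the argument is then fed into the ubiquity theorem of Beresnevich--Velani (Theorem~\ref{thm:ubiquity_divergence}), which packages away the quasi-independence computation you sketch and also delivers the Hausdorff-measure upgrade for free. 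Your closing mention of a ``regular system'' is pointing at the right object, but you present it as an equivalent reformulation of the overlap bound when it is in fact the essential input without which the divergence half cannot start.
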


\begin{note}
  Like many other Khinchin-Groshev-type theorems, this kind of result has already found
  applications in communication engineering, specifically in the field of
  \emph{interference alignment}; see for example \cite[Appendix~B]{OrdenE2013}
  and \cite[Section~IV]{MahMK2010}, or \cite{GhaseMK2010,MotahOMK2014} for
  examples which require results of approximation on manifolds. The interested
  reader may also find a more accessible description of how Khinchin-like
  theorems come into play in the theory of interference alignment in
  \cite[Appendix~A]{AdiceBLVZ2016}.
\end{note}

Finally, one might consider what changes after introducing a dependency among
the variables $x_0,\dotsc,x_m$ of \eqref{eq:mahler_higher_dimension} (or
$\bm{x}_0,\dotsc,\bm{x}_m$ in \eqref{eq:small_linear_forms}), i.e. when they are
parametrised by a sufficiently regular map
$\bm{f} \colon \domf \subset \R^d \to \R^{m+1}$, and this is the subject of the
present paper.

Clearly, if $P(x)$ is small, then $x$ must be close to at least one of the roots
of $P$. In particular, this means that if $P$ is irreducible and
$\abs{P(f_0(\bm{x}))},\dotsc,\abs{P(f_m(\bm{x}))}$ are all small, then there
must be a point $\bm{\alpha} \in \R^{m+1}$ close to $\bm{f}(\bm{x})$, where the
coordinates of $\bm{\alpha}$ are algebraic and conjugate. Note, however, that
there are subtle differences among these two types of approximation, as evinced
by the difference between the classifications of numbers of Mahler and Koksma
\cite[Section~3.4]{Bugea2004}.

Nonetheless, a good first step towards establishing a result like
Theorem~\ref{thm:independent_polynomial_khinchin} is to provide an estimate for
the number of such points $\bm{\alpha}$ which are sufficiently close to the
manifold $\manifold$ parametrised by $\bm{f}$ (see e.g.
\cite[Section~2.6]{Sprin1979}). Furthermore, the techniques used to derive such
estimates can be of interest in and of themselves; for example, in the case of
rational points they have been adapted to derive an efficient algorithm to
compute the rational points with bounded denominator on a given manifold, see
\cite{Elkie2000}, or \cite[Section~11]{Mazur2004} for a nice overview.
This problem was first considered for planar curves by Bernik, G\"{o}tze and
Kukso in \cite{BerniGK2014}. In other words, let $\domf \subset \R$ be a bounded
open interval and let $f_1 \colon \domf \to \R$ be a $\cont[1]$ function; also,
define the sets
\begin{equation}
  \label{eq:original_set_of_algebraic_points}
  \begin{gathered}
    \alg{2}{n}{Q}
    \coloneqq \{\bm{\alpha} \in \R^2 :
                \bm{\alpha} \text{ is algebraic },
                \deg(\bm{\alpha}) \leq n,
                \polyheight{\bm{\alpha}} \leq Q
              \} \\
    M_{f_1}^n(Q,\gamma,\domf)
    \coloneqq \{(\alpha_0,\alpha_1) \in \alg{2}{n}{Q} :
                \alpha_0 \in \domf,
                    \abs{f_1(\alpha_0) - \alpha_1} < \capprcount Q^{-\gamma}
              \},
  \end{gathered}
\end{equation}
where $\capprcount > 0$ is fixed. Here by $\bm{\alpha} \in \R^{m+1}$
\emph{algebraic} we mean that its coordinates are algebraic conjugate real
numbers, and by $\polyheight{\bm{\alpha}}$ we denote the height of their
minimal polynomial.

A lower bound for $\# M_{f_1}^n(Q,\gamma,\domf)$ was provided in
\cite{BerniGK2014} for $0 < \gamma < \frac{1}{2}$. This was soon extended in
\cite{BerniGG2016}, where Bernik, G\"{o}tze and Gusakova also provided an upper
bound. We also note that recently Bernik, Budarina and Dickinson provided
an analogous lower bound for surfaces in $\R^3$ \cite{BerniBD2020}.

\begin{theorem}[{\cite[Theorem~1]{BerniGG2016}}]
  \label{thm:bernik_goetze_gusakova}
  Suppose that both $\sup_\domf \abs{f_1'}$ and $\#\{x \in \domf : f_1(x) = x\}$
  are bounded. If $\capprcount$ is sufficiently large, then
  \[
    \# M_{f_1}^n(Q, \gamma, \domf) \asymp Q^{n+1-\gamma}
  \]
  for every $Q$ large enough and $0 < \gamma < 1$.
\end{theorem}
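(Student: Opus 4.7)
The plan is to establish matching lower and upper bounds for $\# M_{f_1}^n(Q, \gamma, \domf)$ by translating the count of algebraic pairs $(\alpha_0, \alpha_1)$ into a count of integer polynomials $P$ of degree $\leq n$ and height $\leq Q$ attaining controlled small values at both $x$ and $f_1(x)$, as $x$ ranges over $\domf$. For the lower bound, I would first excise a small neighbourhood of the finitely many fixed points of $f_1$, so that $\abs{f_1(x) - x}$ is bounded below on the remaining part $\domf_0 \subseteq \domf$; then for each $x \in \domf_0$ I would apply Minkowski's theorem on linear forms to the convex body of $(a_0, \dotsc, a_n) \in \R^{n+1}$ such that $\abs{P(x)} < c_0 Q^{-n+\gamma}$, $\abs{P(f_1(x))} < c_1 Q^{1-\gamma}$ and $\abs{a_i} < Q$ for $i \geq 2$, where $P(X) = \sum_i a_i X^i$. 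A direct volume computation shows this body has measure $\asymp c_0 c_1 / \abs{f_1(x) - x}$, so for $c_0 c_1$ chosen large enough (depending on the lower bound for $\abs{f_1(x) - x}$ on $\domf_0$) Minkowski produces a non-zero integer polynomial $P_x$ attaining the two small values simultaneously, whose remaining coefficients $a_0, a_1$ are automatically of size $O(Q)$ by inverting the underlying linear system in $(a_0, a_1)$.

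Assuming the real roots $\alpha_0, \alpha_1$ of $P_x$ closest to $x$ and to $f_1(x)$ respectively both satisfy $\abs{P_x'(\alpha_i)} \asymp Q$, a Taylor expansion delivers $\abs{x - \alpha_0} \ll Q^{-n+\gamma-1}$ and $\abs{f_1(x) - \alpha_1} \ll Q^{-\gamma}$, and then the triangle inequality together with the bound on $\sup_{\domf} \abs{f_1'}$ places $(\alpha_0, \alpha_1)$ in $M_{f_1}^n(Q, \gamma, \domf)$ once $\capprcount$ is sufficiently large. The main obstacle is certifying that hypothesis, together with the irreducibility of $P_x$ and the reality of the two roots in question, on a subset $G \subseteq \domf_0$ of measure comparable to $\Leb{\domf}$. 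The plan is to discharge all these degeneracy conditions via the quantitative non-divergence estimates of Kleinbock and Margulis applied to the nondegenerate curve $x \mapsto (x, f_1(x))$: these bound the excursions of the associated lattice in the space of unimodular lattices into the cusp, and after careful calibration of the parameters the set of $x$ on which $P_x$ is degenerate in any of the senses above has measure negligibly small compared to $\Leb{\domf}$.

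A standard covering argument then closes the lower bound: each polynomial $P_x$ is associated with $x$ in an interval of length $\ll Q^{-n+\gamma-1}$ around each of its roots, so the number of distinct $P_x$ arising as $x$ varies over $G$ is $\gg Q^{n+1-\gamma}$, and irreducibility ensures distinct $P$'s produce distinct pairs $(\alpha_0, \alpha_1)$. The matching upper bound follows by reversing the argument: given $(\alpha_0, \alpha_1) \in M_{f_1}^n(Q, \gamma, \domf)$, its minimal polynomial $P$ satisfies $P(\alpha_0) = 0$ and $\abs{P(f_1(\alpha_0))} = \abs{P(f_1(\alpha_0)) - P(\alpha_1)} \ll Q^{1-\gamma}$, so a classical Bernik-type covering estimate bounds the number of such $P$ by $\ll Q^{n+1-\gamma}$, while each $P$ accounts for only $O(1)$ valid pairs.
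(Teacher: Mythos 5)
You should first note that Theorem~\ref{thm:bernik_goetze_gusakova} is quoted here from \cite{BerniGG2016}: the present paper does not reprove it, and only generalises its \emph{lower} bound (Corollary~\ref{cor:lower_bound_for_manifold_count}, deduced from Theorem~\ref{thm:bound_over_an_interval} via Corollaries~\ref{cor:tailored_polynomials} and~\ref{cor:n+1_points}). Your lower-bound strategy is essentially the one the paper uses for that generalisation: excise the fixed points so that $f_1(x)-x$, the determinant of the relevant $2\times 2$ Vandermonde block, is bounded away from zero; apply Minkowski with target sizes $Q^{-n+\gamma}$, $Q^{1-\gamma}$, $Q,\dotsc,Q$ (exactly the $\psi$, $\Psi$, $\varphi_k$ chosen in the proof of Corollary~\ref{cor:lower_bound_for_manifold_count}); kill the set where the resulting polynomial has derivative $o(Q)$ at $x$ or $f_1(x)$ by quantitative non-divergence; and finish with the covering count. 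Two gaps remain in this half. First, irreducibility of $P_x$ is \emph{not} delivered by non-divergence, which only controls measure; the paper has to work for it in Section~\ref{sec:tailored_polynomials}, replacing the single Minkowski polynomial by $n+1$ independent ones from Minkowski's second theorem and passing to integer combinations with prescribed residues modulo $p$ and $p^2$ so that Eisenstein's criterion applies. (For this one-dimensional statement you could instead discard reducible polynomials outright: there are $O(Q^n\log^2 Q)$ of them and the set of $x$ they absorb has measure $\ll Q^{n}\log^2Q\cdot Q^{-n-1+\gamma}\to 0$ for $\gamma<1$; but some such argument must be supplied.) Second, a small correction: in the Minkowski step it is the \emph{upper} bound on $|f_1(x)-x|$ that forces $c_0c_1$ to be large (the body has volume $\asymp c_0c_1/|f_1(x)-x|$), while the lower bound on $|f_1(x)-x|$ is what you need to invert the $2\times2$ system and conclude $|a_0|,|a_1|\ll Q$.

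The larger gap is the upper bound, which is half of the asserted $\asymp$ and is precisely the new contribution of \cite{BerniGG2016} over the earlier lower-bound paper \cite{BerniGK2014}. Your reduction is correct as far as it goes: the minimal polynomial $P$ of the pair satisfies $P(\alpha_0)=0$ and $|P(f_1(\alpha_0))|\ll Q^{1-\gamma}$, and each $P$ yields $O(1)$ pairs. But counting the admissible $P$ is not a routine covering estimate, because the point $f_1(\alpha_0)$ at which $P$ must be small depends on $P$ through its root $\alpha_0$, so one cannot simply argue that a condition of strength $Q^{1-\gamma}$ on a linear form in the coefficients saves a factor $Q^{-\gamma}$ off the trivial count $Q^{n+1}$. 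The actual argument requires a case analysis according to the sizes of $|P'(\alpha_0)|$ and $|P'(\alpha_1)|$ (small derivatives are exactly where the naive heuristic fails), and this occupies most of the upper-bound proof in \cite{BerniGG2016}; see also Remark~\ref{rmk:upper_bound}, where the present paper explains that even its own machinery would recover the upper bound only under an additional translation-invariance hypothesis. As written, your upper bound is asserted rather than proved.
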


\begin{note}
  Here and throughout the paper we will make extended use of Vinogradov's
  notation. Namely, we will write $a \ll b$ if there is a constant $c > 0$ such
  that $a < c b$, as well as $b \gg a$ if $a \ll b$, and $a \asymp b$ when
  $a \ll b$ and $b \ll a$ symultaneously, in which case we say that $a$ is
  \emph{comparable} to $b$. Occasionally we will make dependencies
  of the implied constant explicit via a subscript, e.g. $a \ll_\varepsilon b$,
  and $c$ is generally assumed to be independent of the variables that $a$ and
  $b$ depend on, although it could depend on the other parameters involved.
  We also extend this notation to vectors in the natural way: if
  $\bm{a} = (a_1,\dotsc,a_r)$ and $\bm{b} = (b_1,\dotsc,b_r)$, then
  $\bm{a} \ll \bm{b}$ means that $a_i \ll b_i$ for every $1 \leq i \leq r$, and
  similarly for $\gg$ and $\asymp$.
\end{note}

In the present paper we will extend the lower bound in
Theorem~\ref{thm:bernik_goetze_gusakova} to sufficiently regular manifolds in
arbitrary dimension.
While the characterisation of these manifolds is quite technical, as a special
case our results hold true when $\bm{f}$ is analytic with algebraically
independent components.
In particular, the following is a special case of
Theorem~\ref{thm:lower_bound_for_manifold_count}, which extends the range of
$\gamma$ to the best possible --- here $M_{\bm{f}}^n$ is the higher dimensional
analogue of $M_{f_1}^n$, see \eqref{eq:general_set_of_algebraic_points} for
details.

\begin{theorem}
  \label{thm:simplified_lower_bound}
  Let $\domf \subset \R^d$ be a bounded open set, and let
  $\bm{f}(\bm{x}) = (x_0,\dotsc,x_{d-1},f_d(\bm{x}),\dotsc,f_m(\bm{x}))$ be an
  analytic function $\domf \to \R^{m+1}$ with algebraically independent
  components. Then for $\capprcount > 0$ fixed and for every
  \begin{equation*}
    0 < \gamma \leq \frac{n + 1}{m + 1}
  \end{equation*}
  we have
  \[
    \#M_{\bm{f}}^n(Q, \gamma, \domf)
    \gg Q^{n + 1 - \gamma (m + 1 - d)}
  \]
  for every $Q$ sufficiently large, where the implied constant does not depend
  on $Q$.
\end{theorem}

In the process of proving Theorem~\ref{thm:simplified_lower_bound}, we will also
be able to extend the divergence part of
Theorem~\ref{thm:independent_polynomial_khinchin} as follows; here
$\HausdorffM^s$ denotes the usual Hausdorff $s$-measure (see
Definition~\ref{ex:hausdorff_measure}).

\begin{theorem}
  \label{thm:simplified_polynomial_jarnik_divergence}
  Let $\domf$, $\bm{f}$ be as in Theorem~\ref{thm:simplified_lower_bound}, and
  let $\approxf \colon \R^+ \to \R^+$ be a decreasing function such that
  $\approxf(Q) \gg Q^{\frac{m-n}{m+1}}$.
  Further, denote by $\dualappr_{n,\bm{f}}(\approxf)$ the set of
  $\bm{x} \in \domf$ such that $\bm{f}(\bm{x}) \in \dualappr_{n,m+1}(\approxf)$.
  Then for any $0 < s \leq d$ we have
  \[
    \Haus[s]{\dualappr_{n,\bm{f}}(\approxf)} = \Haus[s]{\domf}
    \quad \text{if} \quad
    \sum_{Q=1}^\infty
      Q^{n-m-1}
      \approxf(Q)^{m+1}
      \paren{\frac{\approxf(Q)}{Q}}^{s-d}
    = \infty.
  \]
\end{theorem}

\begin{corollary}[Cfr. {\cite[Corollary~1]{BeresBB2017}}]
  \label{cor:polynomial_hausdorff_dimension}
  Let $\Hausdim$ denote Hausdorff dimension. In the same setting of
  Theorem~\ref{thm:simplified_polynomial_jarnik_divergence}, we have that
  \[
    \Hausdim\paren{\dualappr_{n,\bm{f}}(\approxf)}
    \geq \min\left\{ d, \frac{n + 1}{\tau_{\approxf} + 1} + d - m - 1 \right\},
  \]
  where
  \[
    \tau_{\approxf}
    \coloneqq \liminf_{Q \to \infty} - \frac{\log \approxf(Q)}{\log Q}
  \]
  is the lower order of $\approxf^{-1}$ at infinity.
\end{corollary}

\begin{remark}
  The condition $\approxf(Q) \gg Q^{\frac{m-n}{m+1}}$ of
  Theorem~\ref{thm:simplified_polynomial_jarnik_divergence} implies
  that $\tau_{\approxf} \leq \frac{n-m}{m+1}$, which is precisely the situation
  where
  \[
    \frac{n + 1}{\tau_{\approxf} + 1} + d - m - 1 \geq d.
  \]
  Therefore in this setting we actually have
  $\Hausdim\paren{\dualappr_{n,\bm{f}}(\approxf)} = d$. On the other hand,
  Theorem~\ref{thm:polynomial_jarnik_divergence} shows that the picture is more
  interesting with two separate approximation functions on $\R^d$ and
  $\R^{m+1-d}$.
\end{remark}

Our proof exploits the powerful quantitative non-divergence bounds first
introduced by Kleinbock and Margulis in \cite{KleinM1998}, and it has a
similar flavour to \cite{BeresBB2017}. This paper is structured as follows:
\begin{itemize}
  \item in the next section we will describe our setting and main results;
  \item in sections \ref{sec:schur_polynomials} and \ref{sec:good_functions} we
    will discuss the regularity conditions that these depend on and provide some
    examples of functions that satisfy them;
  \item the next sections are devoted to the proofs, in this order: sections
    \ref{sec:conjugate_coordinates} and \ref{sec:tailored_polynomials} for the
    extension of Theorem~\ref{thm:bernik_goetze_gusakova},
    section~\ref{sec:ubiquity} for the extension of
    Theorem~\ref{thm:independent_polynomial_khinchin}, and
    section~\ref{sec:proof_of_the_main_theorem} for the proof of
    Theorem~\ref{thm:bound_over_an_interval}, which underpins the whole
    argument; and
  \item the last section contains some final remarks about possible directions
    in which this work could be extended.
\end{itemize}

The author would like to thank Prof. Victor Beresnevich for the continued
support and insightful comments, and without whom this work would not have been
possible.

\section{The main result}
\label{sec:main_result}

Let $X$ be a metric space. If $\kappa > 0$ and $B \subset X$ is a ball centred
at $x$ and with radius $r$, throughout this paper $\kappa B$ will denote the
dilation of $B$ by $\kappa$, i.e. the ball with centre $x$ and radius
$\kappa r$.

\begin{definition}
  \label{def:besicovitch_metric_space}
  Let $N > 0$. Following \cite{KleinLW2004}, a metric space $X$ is called
  \emph{$N$-Besicovitch} if, for any bounded set $A \subset X$ and any
  collection of balls $\mathcal{B}$ such that every $x \in A$ is in the centre
  of a ball in $\mathcal{B}$, there is a countable collection
  $\Omega \subseteq \mathcal{B}$ which covers $A$ and such that every point
  $x \in A$ lies in at most finitely many balls in $\Omega$. We will also say
  that $X$ is \emph{Besicovitch} if it is $N$-Besicovitch for some $N > 0$.
\end{definition}

\begin{example}
  \label{ex:euclidean_space_is_besicovitch}
  It is well known that $\R^n$ with the Euclidean metric is Besicovitch, see
  e.g. \cite[Theorem~2.7]{Matti1995}.
\end{example}

\begin{definition}
  \label{def:measure_properties}
  Let $U \subset X$ be an open subset and let $\nu$ be a Radon measure on $U$.
  Following \cite{KleinLW2004}, we will say that $\nu$ is:
  \begin{itemize}
    \item \emph{$D$-Federer (or doubling) on $U$} for some $D > 0$ if
      \[
        \nu\paren{3^{-1} B} > \frac{\nu(B)}{D}
      \]
      for any ball $B \subset U$ centred on $\supp \nu$.
    \item \emph{Federer} if for $\nu$ almost every $x \in X$ there are a
      neighbourhood $U$ of $x$ and a $D > 0$ such that $\nu$ is $D$-Federer on
      $U$.
    \item \emph{(Rationally) non-planar} if $X = \R^d$ and $\nu(\plane) = 0$ for
      every (rational) affine hyperplane of $\R^d$.
    \item \emph{$(C,\alpha)$-decaying on $U$} for some $C,\alpha > 0$
      if $X = \R^d$ and for any ball $B$ centred on $\supp \nu$, any affine
      hyperplane $\plane \subset \R^d$, and any $\varepsilon > 0$ we have
      \begin{equation}
        \label{eq:decaying_measure}
        \nu\paren*[\big]{B \cap \plane^{(\varepsilon)}}
        \leq C \paren{\frac{\varepsilon}{\norm{d_\plane}_{\nu,B}}}^\alpha \nu(B),
      \end{equation}
      where $\plane^{(\varepsilon)}$ is the $\varepsilon$-neighbourhood of
      $\plane$, $d_\plane$ is the Euclidean distance from $\plane$, and
      $\norm{d_\plane}_{\nu,B} = \sup_{\bm{x} \in B \cap \supp\nu}
      d_\plane(\bm{x})$. C.f. Definition~\ref{def:good_function}.
    \item \emph{Absolutely $(C,\alpha)$-decaying on $U$} if
      \eqref{eq:decaying_measure} holds with the radius of $B$ in place of
      $\norm{d_\plane}_{\nu,B}$.
    \item \emph{(Absolutely) decaying} if for $\nu$ almost every $x \in X$ there
      are a neighbourhood $U$ of $x$ and constants $C,\alpha > 0$ such that
      $\nu$ is (absolutely) $(C,\alpha)$-decaying on $U$.
  \end{itemize}
\end{definition}

\begin{remark}
  Both classes of Federer and absolutely decaying measures are closed under
  restriction to open subsets $U \subset X$. Furthermore, they are also closed
  with respect to taking finite products \cite[Theorem~2.4]{KleinLW2004}.
\end{remark}

\begin{example}
  \label{ex:federer_and_decaying_measures}
  Examples of measures that are Federerer and absolutely decaying on $\R^d$
  include the Lebesgue measure and measures supported on certain self-similar
  sets (see e.g. \cite{KleinLW2004} and \cite{MauldU2003}).
\end{example}

Now consider a $d$-dimensional manifold $\manifold$ in $\R^{m+1}$,
parametrised over a bounded open subset $\domf \subset X$ by a continuous map
$\bm{f}(\bm{x}) = (f_0(\bm{x}), \dotsc, f_m(\bm{x}))$. Without loss of
generality, if $X = \R^d$ we will assume that $f_i(\bm{x}) = x_i$ for
$0 \leq i < d$ and write $\partf$ for $(f_d,\dotsc,f_m)$.
Then, for $n \geq m+1$ fixed, define the vectors in $\R^{n+1}$
\[
  \begin{array}{c c}
    \begin{aligned}
      \bm{v}_i = \bm{v}_i(\bm{x}) &\coloneqq
      \begin{pmatrix}
        1 &
        f_i(\bm{x}) &
        f_i(\bm{x})^2 &
        \cdots &
        \mathmakebox[\widthof{$n f_i(\bm{x})^{n-1}$}][l]{f_i(\bm{x})^n}
      \end{pmatrix} \\
      \bm{v}'_i = \bm{v}'_i(\bm{x}) &\coloneqq
      \begin{pmatrix}
        0 &
        \mathmakebox[\widthof{$f_i(\bm{x})$}]{1} &
        \mathmakebox[\widthof{$f_i(\bm{x}^2)$}][l]{2f_i(\bm{x})} &
        \cdots &
        n f_i(\bm{x})^{n-1}
      \end{pmatrix}
    \end{aligned}
    & \quad 0 \leq i \leq m
  \end{array}
\]
and the $(n+1) \times (n+1)$ matrices
\begin{align}
  \label{eq:main_matrix}
  M_{\bm{f}} &\coloneqq
  \left(
  \begin{array}{@{}c@{}}
    \begin{matrix}
      \bm{v}_0 \\
      \vdots \\
      \bm{v}_m
    \end{matrix} \\
    \begin{array}{c | c}
      \hline \\[\dimexpr-\normalbaselineskip+2pt]
      \bigzero & \bigid_{n-m}
    \end{array}
    \\[.5ex]
  \end{array}
  \right) \\
  \label{eq:system_matrices}
  U_{\bm{f}}^h &\coloneqq
  \left(
  \begin{array}{@{}c@{}}
    \begin{matrix}
      \bm{v}_0 \\
      \vdots \\
      \bm{v}_m \\
      \bm{v}'_h
    \end{matrix} \\
    \begin{array}{c | c}
      \hline \\[\dimexpr-\normalbaselineskip+2pt]
      \bigzero & \bigid_{n-m-1}
    \end{array}
    \\[.5ex]
  \end{array}
  \right).
\end{align}

\begin{remark}
  \label{rmk:non_zero_determinant}
  The determinant of $U_{\bm{f}}^h$ is the same as the determinant of the
  submatrix $\tilde{U}_{\bm{f}}^h$ formed by its first $n+1$ rows and columns.
  The latter is an example of what in the literature is known as a
  \emph{confluent Vandermonde matrix}, and a theorem of Schendel's (see e.g.
  \cite[Theorem~20]{Kratt1999}) shows that
  \[
    \abs*[\big]{\det \tilde{U}_{\bm{f}}^h}
    = \prod_{0 \leq i < j \leq m} \abs{f_i(\bm{x}) - f_j(\bm{x})}^{e_i e_j},
  \]
  where $e_i$ is $2$ if $i = h$ and $1$ otherwise. In particular,
  $\det U_{\bm{f}}^h \neq 0$ if and only if the Vandermonde
  polynomial $\Vand(\bm{f})$ is non-zero (see \eqref{eq:vandermonde_polynomial}
  below).
\end{remark}

For ease of notation, we will write $\intrange{n}$ instead of $\{0,\dotsc,n\}$,
as well as $\indexsets{n}{\tau}$ for the set of
$I = (i_1,\dotsc,i_\tau) \in \intrange{n}^\tau$ such that
$i_1 < i_2 < \dotsb < i_\tau$, where $1 \leq \tau \leq n+1$.
Given an $(n+1) \times (n+1)$ matrix $A$, we will also write $A_{I,J}$ for the
submatrix of $A$ with rows indexed by $I \subseteq \indexsets{n}{\tau}$ and columns
indexed by $J \subseteq \indexsets{n}{\tau}$, and $\minor{A}{I,J}$ for its
determinant.

Then, for every $1 \leq \tau \leq n+1$ and for every $I \in \indexsets{n}{\tau}$,
define the map from the set $\mat{n+1}{n+1}$ of $(n+1) \times (n+1)$ matrices to
$\bigwedge^\tau \R^{n+1} \simeq \R^{\binom{n+1}{\tau}}$ given by
\begin{equation}
  \label{eq:grassmanian_map}
  \grass[I] \colon
  A \mapsto \paren*[\big]{\minor{A}{I,J}}_{J \in \indexsets{n}{\tau}}.
\end{equation}
In other words, $\grass[I][A]$ is the image under the Pl\"ucker embedding of the
linear subspace of $\R^{n+1}$ spanned by the rows of $A$ indexed by $I$.
Furthermore, in Section~\ref{sec:schur_polynomials} we will see that for
$I \in \indexsets{m}{\tau}$ and $1 \leq \tau \leq m+1$ we have
\[
  \grass[I][M_{\bm{f}}] =
  \paren*[\big]{\Vand(\bm{f}_I) s_{\lambda}(\bm{f})}_{
               \substack{\abs{\lambda} \leq n+1-\tau \\
                         \len{\lambda} \leq \tau}}
\]
where $\Vand(\bm{f}_I)$ is the Vandermonde polynomial of
$\bm{f}_I = (f_i)_{i \in I}$, i.e.
\begin{equation}
  \label{eq:vandermonde_polynomial}
  \Vand(\bm{f}_I) \coloneqq \prod_{\substack{i,j \in I \\ i < j}} (f_j - f_i),
\end{equation}
and $s_{\lambda}$ is the \emph{Schur polynomial}
in $\tau$ indeterminates correponding to the partition $\lambda$ of the integer
$\abs{\lambda}$ with $\len{\lambda}$ parts (see
Definition~\ref{def:schur_polynomial}). Therefore, we also define
\begin{equation}
  \label{eq:schur_map}
  \Schur[n,\tau] \colon \bm{T} = (T_1,\dotsc,T_{\tau})
                 \mapsto \paren*[\big]{s_{\lambda}(\bm{T})}_{
                                      \substack{\abs{\lambda} \leq n+1-\tau \\
                                      \len{\lambda} \leq \tau}}
\end{equation}
and with a slight abuse of notation we will write $\Schur[\tau][\bm{T}]$ or
$\Schur[][\bm{T}]$ instead of $\Schur[n,\tau][\bm{T}]$ whenever $n$ or $\tau$ are
clear from the context. Finally, observe that if $\Vand(\bm{f})$ is bounded on
$\domf$, then $\grass[I][M_{\bm{f}}] \asymp \Schur[][\bm{f}_I]$.

\begin{definition}
  \label{def:non_symmetric}
  Let $X$ be a measure space and $\nu$ a measure on $X$. Fix $\tau \geq 1$ and
  let $\sympolybounded{\tau}{k}$ be the space of rational symmetric polynomials
  of degree up to $k$. Given a map
  $\bm{f} \colon \domf \subseteq X \to \R^\tau$, the pair $(\bm{f}, \nu)$ is
  called:
  \begin{itemize}
    \item \emph{Non-symmetric (of degree $k$) at $x$} if for every neighbourhood
      $B \ni x$ and $s \in \sympolybounded{\tau}{k}$ we have that
      $\bm{f}(B \cap \supp\nu)$ is not contained in the zero locus of
      $s$ in $\R^{\tau}$.
      Cf. the definitions of non-planarity from \cite{Klein2008,KleinT2007}.
    \item \emph{Non-symmetric (of degree $k$) on $\domf$} if it is non-symmetric
      of degree $k$ at every $x \in \domf \cap \supp\nu$.
    \item \emph{Symmetrically good (of degree $k$) on $\domf$} if it is
      non-symmetric (of degree $k$) on $\domf$ and there are constants
      $C,\alpha > 0$ such that $s(\bm{f})$ is $(C,\alpha)$-good on $B$ with
      respect to $\nu$ for every $s \in \sympolybounded{\tau}{k}$ (see
      Definition~\ref{def:good_function}).
  \end{itemize}
\end{definition}

\begin{note}
   See Corollary~\ref{cor:symmetric_goodness_equivalence} for an equivalent
   characterisation of $(\bm{f}, \nu)$ being non-symmetric of degree $k$ in
   terms of the components of an appropriate $\Schur(\bm{f})$.
\end{note}

Now take functions
$\psi_0, \dotsc, \psi_{m},\varphi_{m+1}, \dotsc, \varphi_{n} \colon \R^+ \to \R^+$,
and for $Q > 0$ consider the system of inequalities
\begin{equation}
  \label{eq:d_dimensional_inequalities}
  \begin{cases}
    \abs{P(f_k(\bm{x}))} < \psi_k(Q) & \text{for } 0 \leq k \leq m \\
    \max_{i}\abs{P'(f_i(\bm{x}))} \leq \varphi_{m+1}(Q) \\
    \abs{a_k} \leq \varphi_k(Q) & \text{for } m+1 < k \leq n
  \end{cases}
\end{equation}
with solutions in integer polynomials $P = a_n X^n + \dotsc + a_0$
of degree at most $n$. Our main result concerns the set
\begin{align*}
  \maxderset^n_{\bm{f}}(Q, \domf)
  = \maxderset^n_{\bm{f}}(Q, \domf; \psi_0, \dotsc, \psi_m, \varphi_{m+1}, \dotsc, \varphi_n)
\end{align*}
of points $x \in \domf$ for which \eqref{eq:d_dimensional_inequalities} admits a
solution.
For ease of notation, given $I \in \indexsets[0]{m}{\tau_1}$ and
$J \in \indexsets[m+1]{n}{\tau_2}$, let
\[
  \bm{\psi}_I \defeq \prod_{i \in I} \psi_i, \quad
  \bm{\varphi}_J \defeq \prod_{j \in J} \varphi_j,
\]
as well as $\bm{\psi} \defeq \bm{\psi}_{\intrange[0]{m}}$ and
$\bm{\varphi} \defeq \bm{\varphi}_{\intrange[m+1]{n}}$.

\begin{theorem}
  \label{thm:bound_over_an_interval}
  Let $X$ be an $N$-Besicovitch space, $\domf \subset X$ a bounded open subset,
  and let $\nu$ be a $D$-Federer measure on $\domf$. Let $\bm{f} \colon \domf
  \to \R^{m+1}$ be a continuous function such that
  $\vandmax \geq \Vand(\bm{f}) \geq \vandmin$ on $\domf \cap \supp \nu$, where
  $\Vand(\bm{f})$ is the Vandermonde polynomial of $\bm{f}$.
  Furthermore, let $\bm{\psi},\bm{\varphi}$ be as above, and suppose that for
  some $n > 0$
  \begin{equation}
    \label{eq:cnd:derivative_bound}
    \varphi_{m+1}(Q)^{n+1} \gg \bm{\psi}(Q)\bm{\varphi}(Q)
  \end{equation}
  and that for every $1 \leq \tau \leq m+1$ there is a choice of
  $I \in \indexsets{m}{\tau}$ such that
  \begin{gather}
    \label{eq:cnd:t_positive_d}
    \bm{\psi}(Q)\bm{\varphi}(Q) \gg \psi_I(Q)^{\frac{n+1}{\tau}}, \text{ and} \\
    \label{eq:cnd:minors_d}
    (\bm{f}_I, \nu) \text{ is symmetrically }
    (\tilde{C},\alpha)\text{-good of degree } n+1-\tau \text{ on } \domf
  \end{gather}
  for some $\tilde{C},\alpha > 0$. Then for any $0 < \theta < 1$ and for $Q$
  large enough we may find a subset $\domfgood{\theta} \subset \domf$ with
  measure $\nu(\domfgood{\theta}) > \theta \nu(\domf)$, as well as
  $C = C(\tilde{C}, \vandmax, \vandmin, n, N, D) > 0$ and
  $\rho = \rho(\bm{f}, n, \domfgood{\theta}) > 0$, such that
  \[
    \nu\paren{\maxderset^n_{\bm{f}}(Q, \domfgood{\theta})}
    \leq C \paren{ \frac{\bm{\psi}(Q) \bm{\varphi}(Q)}
                        {\rho^{n+1}} }^\frac{\alpha}{n+1}
         \nu(\domfgood{\theta}).
  \]
\end{theorem}

\begin{remark}
  \label{rmk:compactness_of_domfgood}
  The set $\domfgood{\theta}$ can be chosen to be either compact or a union of
  finitely many open balls (which form a cover for this compact set).
\end{remark}

\begin{note}
  Corollaries \ref{cor:analytic_symmetric_independence} and
  \ref{cor:analytic_nondegenerate} below show that it is relatively
  straighforward to check condition~\eqref{eq:cnd:minors_d} when $X =
  \R^d$ and $\bm{f}$ is analytic.
\end{note}

\begin{corollary}
  \label{cor:metric_convergence}
  Let
  $\maxderset^n_{\bm{f}}(\domf)
  = \limsup_{Q} \maxderset^n_{\bm{f}}(Q, \domf)$.
  Under the hypothesis of Theorem~\ref{thm:bound_over_an_interval},
  further assume that $X = \R^d$, $\bm{f}$ is analytic and $\nu$ is the Lebesgue
  measure on $\domf$. Then $\alpha$ can be chosen to be one of:
  \begin{itemize}
    \item $1/d(N-1)$, or
    \item $1/k\deg(\bm{f})$ if $\bm{f}$ is a polynomial map.
  \end{itemize}
  Moreover,
  \begin{equation}
    \label{eq:cnd:metric_convergence}
    \sum_{Q = 1}^{\infty}
    \paren*[\big]{\bm{\psi}(Q) \bm{\varphi}(Q)}^{\frac{\alpha}{n+1}} < \infty
    \qquad \text{implies} \qquad
    \nu\paren*[\big]{\maxderset^n_{\bm{f}}(\domf)} = 0.
  \end{equation}
\end{corollary}


\begin{corollary}
  \label{cor:n+1_points}
  Consider $\psi_0,\dotsc,\psi_{m}$,
  $\bm{f}$ as in Theorem~\ref{thm:bound_over_an_interval} such that
  condition~\eqref{eq:cnd:t_positive_d} holds with
  $\varphi_{m+1}(Q) = \dotsb = \varphi_n(Q) = Q$.
  Furthermore, assume that for every $0 \leq i \leq m$ and for $Q$ large enough
  $\psi_i(Q)Q^{-1}$ is decreasing, and that there are constants
  $\deltamin, \deltamax > 0$ such that
  \begin{equation*}
    \deltamin \leq \bm{\psi}(Q) Q^{n-m} \leq \deltamax.
  \end{equation*}
  Then for every $0 < \theta < 1$ there are a constant $c > 0$ and a subset
  $\domfgood{\theta}$ of $\domf$, independent of $Q$, such that
  $\nu(\domfgood{\theta}) > \theta \nu(\domf)$ and every
  $\bm{x} \in \domfgood{\theta}$ admits $n+1$ distinct points
  $(\alpha_0, \dotsc, \alpha_m) \in \R^{m+1}$ with algebraic conjugate
  coordinates of height $\polyheight{\alpha_k} \ll Q$ which satisfy
  \begin{equation}
    \label{eq:closeness_tolerance}
    \abs{f_k(\bm{x}) - \alpha_k} < c \frac{\psi_k(Q)}{Q}
    \qquad \text{for } 0 \leq k \leq m
  \end{equation}
  whenever $Q > 0$ is sufficiently large.
\end{corollary}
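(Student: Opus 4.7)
The plan is to combine the upper bound of Theorem~\ref{thm:bound_over_an_interval} with a Minkowski lattice-point argument in the coefficient space of polynomials of degree at most $n$, and to extract algebraic approximants of $\bm{f}(\bm{x})$ from the factorisations of the integer polynomials so produced.

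First, to locate the good subset, I would fix $\theta' \in (\theta, 1)$ and apply Theorem~\ref{thm:bound_over_an_interval} to obtain $\domfgood{\theta'} \subset \domf$ with $\nu(\domfgood{\theta'}) > \theta' \nu(\domf)$. Since $\bm{\psi}\bm{\varphi} \leq \deltamax$ is uniformly bounded, the bound
\[
  \nu\paren{\maxderset^n_{\bm{f}}(Q, \domfgood{\theta'})}
  \ll \paren{\bm{\psi}\bm{\varphi}/\rho^{n+1}}^{\alpha/(n+1)} \nu(\domfgood{\theta'})
\]
can be forced to be at most $(\theta' - \theta)\nu(\domf)$ uniformly in $Q$ by shrinking $\domfgood{\theta'}$ to enlarge $\rho = \rho(\bm{f}, n, \domfgood{\theta'})$, or by rescaling $\psi_k \mapsto \kappa \psi_k$ with $\kappa$ small---the latter preserves~\eqref{eq:condition_derivative_bound} and~\eqref{eq:condition_t_positive_d} since $m < n$. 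Producing a single good set $\domfgood{\theta}$ independent of $Q$ is then handled by a standard diagonal / Borel--Cantelli argument together with Egorov's theorem, yielding $\nu(\domfgood{\theta}) > \theta \nu(\domf)$ and $\bm{x} \notin \maxderset^n_{\bm{f}}(Q, \domfgood{\theta'})$ for every $\bm{x} \in \domfgood{\theta}$ and every $Q \geq Q_0$.

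Next, for each such $\bm{x}$ and $Q$, I would apply Minkowski's second theorem to the symmetric convex body $K_{\bm{x},Q} \subset \R^{n+1}$ cut out by $\{|P(f_k(\bm{x}))| < \psi_k(Q)\}_{0 \leq k \leq m}$ and $\{|a_k| \leq \varphi_k(Q)\}_{m+1 \leq k \leq n}$. The change of variables encoded by $M_{\bm{f}}$ shows its volume is $\asymp \bm{\psi}(Q)\bm{\varphi}(Q)/|\Vand(\bm{f}(\bm{x}))| \geq \deltamin/\vandmax$, so a bounded dilate contains $n+1$ linearly independent integer polynomials $P_1, \dotsc, P_{n+1}$ with $|P_j(f_k(\bm{x}))| \ll \psi_k(Q)$ and $\polyheight{P_j} \ll \varphi(Q)$. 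Because $\bm{x} \notin \maxderset^n_{\bm{f}}(Q, \cdot)$, each $P_j$ violates the derivative constraint, so $|P'_j(f_{h_j}(\bm{x}))| > \varphi(Q)$ for some $h_j \in \{0, \dotsc, m\}$. From each $P_j$ I would then extract a tuple $(\alpha_{j,0}, \dotsc, \alpha_{j,m})$ of conjugate roots close to $\bm{f}(\bm{x})$: the factorisation $P_j(y) = a_{j,n}\prod_i (y - \beta_{j,i})$ combined with $|P_j(f_k)| \ll \psi_k$ places a root of $P_j$ near each $f_k$, a Newton-type argument sharpens this to $|f_k(\bm{x}) - \alpha_{j,k}| \ll \psi_k(Q)/\varphi(Q)$, and passing to an irreducible factor realises the $\alpha_{j,k}$ as algebraic conjugates. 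Distinctness of the $n+1$ resulting tuples follows from the linear independence of the $P_j$ together with the uniqueness of the nearest root ensured by $\psi_k(Q)/\varphi(Q) \to 0$.

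The technical crux I foresee is the derivative propagation in the last step: Theorem~\ref{thm:bound_over_an_interval} delivers only $\max_i |P'_j(f_i(\bm{x}))| > \varphi(Q)$, whereas the sharp root estimate at coordinate $k$ nominally requires $|P'_j(f_k(\bm{x}))| > \varphi(Q)$. Closing this gap will likely require combining the explicit factorisation of $P_j$ with the trivial upper bound $|P'_j(y)| \ll \varphi(Q)$ valid on bounded sets, exploiting the smallness of $\psi_k/\varphi$ to pin down the nearest root of $P_j$ to each $f_k$ as simple and within the required tolerance.
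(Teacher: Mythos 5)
Your skeleton (quantitative non-divergence via Theorem~\ref{thm:bound_over_an_interval} to exclude a small bad set, then Minkowski's second theorem in the coefficient lattice $M_{\bm{f}}(\bm{x})\Z^{n+1}$ to produce $n+1$ independent polynomials, then root extraction) is indeed the paper's route, which goes through Corollary~\ref{cor:tailored_polynomials}. However, two of the steps you compress do not work as you describe, and both are where the real content of the proof lies.

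First, the derivative bound. You correctly identify that the root-localisation at coordinate $k$ needs a \emph{lower} bound on $\abs{P'(f_k(\bm{x}))}$ at \emph{every} $k$, but your proposed repair --- combining the factorisation of $P_j$ with the trivial upper bound $\abs{P'_j}\ll\varphi(Q)$ and the smallness of $\psi_k/\varphi$ --- cannot succeed. Knowing only $\abs{P(f_k(\bm{x}))}<\psi_k(Q)$ and $\abs{P'}\ll\varphi(Q)$ is compatible with the nearest real root of $P$ lying at distance $\asymp(\psi_k/\varphi)^{1/2}\gg\psi_k/\varphi$ from $f_k(\bm{x})$ (think of $P$ behaving like $\varphi\,(y-f_k(\bm{x}))^2-\psi_k$ near a near-double root), so no Newton-type argument pins the root inside $c\,\psi_k(Q)/\varphi(Q)$ without a pointwise derivative lower bound. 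The paper's resolution is structural: the non-divergence estimate is run separately for each of the $m+1$ matrices $U^h_{\bm{f}}$ of \eqref{eq:system_matrices}, one per derivative index $h$, so that for each $h$ the set of $\bm{x}$ admitting an admissible $P$ with $\abs{P'(f_h(\bm{x}))}\leq\ctailoredf\varphi(Q)$ has small measure; removing the union of these $m+1$ sets yields polynomials with $\abs{P'(f_k(\bm{x}))}>\ctailoredf\varphi(Q)$ for \emph{all} $k$ simultaneously (this is the $\min$, not the $\max$, and it is exactly what \eqref{eq:tailored_polynomial_system} asserts). Only then does the Mean Value Theorem give a sign change of $P$ on each interval $[y_k-\kappa\psi_k/\varphi,\,y_k+\kappa\psi_k/\varphi]$ and hence a real root within the required tolerance.

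Second, irreducibility. The corollary demands that $\alpha_0,\dotsc,\alpha_m$ be algebraic \emph{conjugates}, i.e.\ roots of one irreducible polynomial. ``Passing to an irreducible factor'' of $P_j$ does not deliver this: the roots of $P_j$ nearest to $f_0(\bm{x}),\dotsc,f_m(\bm{x})$ may be distributed among different irreducible factors, and no single factor need be small at all $m+1$ points. The paper instead replaces the $n+1$ Minkowski polynomials $P_0,\dotsc,P_n$ by integer linear combinations $\widetilde{P}_\ell=\sum_i\eta_{\ell i}P_i$ whose coefficient vectors are engineered modulo a prime $p$ and modulo $p^2$ so that Eisenstein's criterion applies, forcing each $\widetilde{P}_\ell$ to be irreducible while keeping the system \eqref{eq:irreducible_minkowski} satisfied up to the controlled constant $\ctailoredy$ and preserving linear independence. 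This arithmetic step is essential and is entirely absent from your outline. (A smaller point: your device for making $\domfgood{\theta}$ independent of $Q$ via Egorov is not justified --- there is no a.e.\ convergence to invoke, and intersecting the $Q$-dependent good sets over all large $Q$ can destroy the measure bound --- so this step also needs the paper's more careful choice of the constants $\delta_0$ and $\ctailoredf$ uniformly in $Q$.)
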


\begin{remark}
  \label{rmk:n+1_points_constant}
  Here $c$ can be chosen to be $\frac{\ctailoredy}{\ctailoredf}$, where the
  constants $\ctailoredy,\ctailoredf$ are the same as in
  Corollary~\ref{cor:tailored_polynomials}. In particular, it depends on
  $\deltamin$ and $\deltamax$ but not on the functions $\psi_i$ themselves.
\end{remark}

Now suppose that $\domf \subset \R^d$ and without loss of generality assume that
$f_i(\bm{x}) = x_i$ for each $0 \leq i < d$. Then for a given $\capprcount > 0$,
let (c.f.  \eqref{eq:original_set_of_algebraic_points})
\begin{equation}
  \label{eq:general_set_of_algebraic_points}
  \begin{gathered}
    \alg{m+1}{n}{Q}
    \coloneqq \{\bm{\alpha} \in \R^{m+1} :
                \bm{\alpha} \text{ is algebraic },
                \deg(\bm{\alpha}) \leq n,
                \polyheight{\bm{\alpha}} \leq Q
              \} \\
    \begin{aligned}
      M_{\bm{f}}^n(Q, \gamma, \domf) \coloneqq
      \big\{&{} (\alpha_0, \dotsc, \alpha_m) \in \mathbb{A}^{m+1}_n(Q) : \\
            &{} \bm{\alpha} = (\alpha_0, \dotsc, \alpha_{d-1}) \in \domf,
                \max_{d \leq j \leq m}
                  \abs{f_j(\bm{\alpha}) - \alpha_j} < \capprcount Q^{-\gamma}
      \big\}
    \end{aligned}
  \end{gathered}
\end{equation}
where and $\gamma > 0$. Then we are able to extend
the lower bound from Theorem~\ref{thm:bernik_goetze_gusakova} as follows:

\begin{theorem}
  \label{thm:lower_bound_for_manifold_count}
  Let $\bm{f} \colon \domf \to \R^{m+1}$ be a $\cont[1]$ map as above such that
  $\Vand(\bm{f}) \neq 0$, and assume that, up to reordering $f_d,\dotsc,f_m$,
  \begin{equation}
    \label{eq:cnd:count_bound}
    (\bm{x},f_d,\dotsc,f_\tau) \text{ is symmetrically good of degree }
                                      n-\tau \text{ on } \domf
  \end{equation}
  for every $d \leq \tau \leq m$. Then for $\capprcount > 0$ fixed and for every
  \begin{equation}
    \label{eq:lower_bound_epsilon}
    0 < \gamma \leq \frac{n + 1}{m + 1}
  \end{equation}
  we have
  \[
    \#M_{\bm{f}}^n(Q, \gamma, \domf)
    \gg Q^{n + 1 - \gamma (m + 1 - d)}
  \]
  for every $Q$ sufficiently large, where the implied constant does not depend
  on $Q$.
\end{theorem}

\begin{remark}
  Theorem~\ref{thm:lower_bound_for_manifold_count} is proved by taking
  $\psi_k = Q^{1-\gamma}$ for $d \leq k \leq m$ in
  Corollary~\ref{cor:n+1_points}. In particular, Sprind\v{z}uk's conjecture
  \eqref{eq:exponent_sprindzuk_conjecture} shows that the upper bound
  $\gamma \leq \frac{n+1}{m+1}$ is in general the best possible.
\end{remark}

\begin{remark}
  \label{rmk:upper_bound}
  If we could show that the lower bound for $Q$ and the implied constant in
  Theorem~\ref{thm:lower_bound_for_manifold_count} can be chosen independently
  of translations of $\manifold$, then we would also have an upper bound
  for $\# M_{\bm{f}}^n(Q, \gamma, \domf)$. Indeed, without loss of generality
  we may assume that $\bm{f}$ is bounded on $\domf$, hence $\manifold$ is
  contained in an open set $K$ of volume comparable to $\vol(\domf)$. Now let
  $\manifold_\gamma$ be the $\gamma$-neighbourhood of $\manifold$, i.e. the set
  \[
      \manifold_\gamma \coloneqq
      \big\{ (\bm{x},\bm{y}) \in \domf \times \R^{m+1-d} :
             \norm{\bm{f}(\bm{x}) - \bm{y}} < Q^{-\gamma}
      \big\}
  \]
  and note that
  $\vol(\manifold_\gamma) \asymp \vol(\domf) Q^{-\gamma(m+1-d)}$. In
  particular, up to replacing $K$ with a slightly bigger open set, we may
  assume that $K$ contains a union of disjoint translated copies
  $\{\manifold_\gamma^j\}_{j \in J}$ of $\manifold_\gamma$, with
  \[
    \#J \asymp \vol(K)/\vol(\manifold_\gamma) \asymp Q^{\gamma(m+1-d)}.
  \]
  If the implied constant in Theorem~\ref{thm:lower_bound_for_manifold_count}
  can be chosen to be in a translation invariant way, then we may find
  $c, Q_0 > 0$ such that for every $Q > Q_0$ and for every $j \in J$ we have
  \[
    \# \paren{\manifold_\gamma^j \cap \alg{m+1}{n}{Q}}
    > c Q^{n + 1 - \gamma (m + 1 - d)}.
  \]
  It follows that
  \[
    \# \paren{K \cap \alg{m+1}{n}{Q}}
    \gg Q^{n + 1 - \gamma (m + 1 - d)} \#J
    \gg Q^{n + 1}.
  \]
  However, since there are only $Q^{n+1}$ polynomials of degree at most $n$ and
  height at most $Q$, we can conclude that
  \[
    \#M_{\bm{f}}^n(Q, \gamma, \domf)
    \ll Q^{n + 1 - \gamma (m + 1 - d)}
  \]
  as well, matching the lower bound.
\end{remark}

We conclude this section by stating our extension of
Theorem~\ref{thm:independent_polynomial_khinchin}.
Define $\dualappr_{n,m+1}(\approxfdim,\approxfcodim; d)$ to be the set of
$\bm{x} \in \R^{m+1}$ such that
\[
  \max_{0 \leq k < d} \abs{P(x_k)} < \approxfdim(\polyheight{P})
  \quad \text{and} \quad
  \max_{d \leq k \leq m} \abs{P(x_k)} < \approxfcodim(\polyheight{P})
\]
for infinitely many $P \in \Z[X]$ with $\deg(P) \leq n$, and note that
$\dualappr_{n,m+1}(\approxfdim,\approxfdim; d) = \dualappr_{n,m+1}(\approxfdim)$
is the set defined in Theorem~\ref{thm:independent_polynomial_khinchin}.
When $\bm{f}$ parametrises a $d$-dimensional manifold $\manifold$ and
$f_k(\bm{x}) = x_k$ for every $0 \leq k < d$, will also write
$\dualappr_{n,\bm{f}}(\approxfdim,\approxfcodim)$ for the set of
$\bm{x} \in \domf$ such that
$\bm{f}(\bm{x}) \in \dualappr_{n,m+1}(\approxfdim,\approxfcodim; d)$.

\begin{theorem}
  \label{thm:polynomial_jarnik_divergence}
  Let $\approxfdim,\approxfcodim \colon \R^+ \to \R^+$ be decreasing functions
  such that
  \begin{equation}
    \label{eq:cnd:approxfcodim_lower_bound}
    \approxfcodim(Q) \gg \max\big\{ Q^{\frac{m-n}{m+1}}, \approxfdim(Q) \big\},
  \end{equation}
  and let $g$ be a dimension function such that $r^{-d}g(r)$ is non-increasing.
  Also assume that $r^{-\gamma}g(r)$ is increasing for some $\gamma > 0$, and
  that there are constants $r_0,\cubdiva,\cubdivb \in (0,1)$ such that
  \begin{equation}
    \label{eq:cnd:ubiquity_divergence_dimension_function}
    r^{\gamma} g(\cubdiva r) \leq \cubdivb g(r) (\cubdiva r)^{\gamma}
    \text{ for any } r \in (0,r_0).
  \end{equation}
  Further suppose that
  $\bm{f}$ is Lipschitz continuous, that $\Vand(\bm{f}) \neq 0$,
  and that $\bm{f}$ is symmetrically good of degree $n + 1 - d$ on $\domf$.
  Then
  \[
    \Haus{\dualappr_{n,\bm{f}}(\approxfdim,\approxfcodim)} = \Haus{\domf}
    \quad \text{if} \quad
    \sum_{Q = 1}^{\infty}
      Q^{n-m-1+d} \approxfcodim(Q)^{m+1-d}
      g\paren{\pointapproxf{Q}}
    = \infty.
  \]
  Moreover,
  \[
    \Leb{\dualappr_{n,\bm{f}}(\approxfdim,\approxfcodim)} = 0
    \quad \text{if} \quad
    \sum_{Q = 1}^{\infty} Q^{n-m-1} \approxfdim(Q)^d \approxfcodim(Q)^{m+1-d} < \infty.
  \]
\end{theorem}

\begin{note}
  The generalised Hausdorff measure $\HausdorffM^g$ will be introduced in
  Definition~\ref{def:generalised_hausdorff_measure}.
  For the moment observe that when $g(r) = r^d$ we have that $\HausdorffM^g$ is
  a constant multiple of the Lebesgue measure on $\R^d$, thus we recover a
  version of Theorem~\ref{thm:independent_polynomial_khinchin} for symmetrically
  good manifolds.
\end{note}

\begin{note}
  Condition~\eqref{eq:cnd:ubiquity_divergence_dimension_function} is not
  particularly restrictive, and in particular it is trivially satisfied for the
  usual Hausdorff $s$-measures, i.e. when $g(r) = r^s$ for some real $s > 0$.
\end{note}

\section{Schur polynomials}%
\label{sec:schur_polynomials}

Throughout this section, we will denote by
$\sympolyring{\tau} = \Q[T_0,\dotsc,T_{\tau-1}]^{S_{\tau}}$ the space of symmetric
polynomials in $\tau$ variables, and define
\[
  \sympolybounded{\tau}{k}
  \coloneqq \{ s \in \sympolyring{\tau} : \deg(s) \leq k \}.
\]

\begin{definition}
  \label{def:order_of_symmetric_independence}
  Let $f_0,\dotsc,f_{\tau-1}$ be a collection of $\tau$ real valued functions.
  The \emph{order of symmetric independence of $f_0,\dotsc,f_{\tau-1}$}, denoted
  by $\symord{f_0,\dotsc,f_{\tau-1}}$, is either
  \[
    \min \{k : s(f_0,\dotsc,f_{\tau-1}) \neq 0
               \text{ for every } s \in \sympolybounded{\tau}{k}\},
  \]
  or $\infty$ when $f_0,\dotsc,f_{\tau-1}$ are algebraically independent over
  $\Q$.
\end{definition}

\begin{note}
  The functions $f_0,\dotsc,f_{\tau-1}$ are algebraically independent over $\Q$
  if and only if there is no symmetric polynomial $S \in \sympolyring{\tau}$
  such that $S(f_0,\dotsc,f_{\tau-1}) = 0$. Indeed, observe that if
  $P(f_0,\dotsc,f_{\tau-1}) = 0$ for some rational polynomial $P$ in $\tau$
  variables, then
  \[
    S(T_0,\dotsc,T_{\tau-1})
    \coloneqq P^{S_{\tau}}
    = \prod_{\sigma \in S_{\tau}} P(T_{\sigma(0)},\dotsc,T_{\sigma(\tau-1)})
  \]
  is a symmetric polynomial such that $S(f_0,\dotsc,f_{\tau-1}) = 0$.
\end{note}

\begin{note}
  Comparing with Definition~\ref{def:non_symmetric}, we see that $(\bm{f}, \nu)$
  is non-symmetric of degree $k$ at $\bm{x}$ if and only if for every ball
  $B$ containing $\bm{x}$ we have
  $\symord*[\Big]{\restr{\bm{f}}{B \cap \supp\nu}} \geq k$.
\end{note}

Now consider $\lambda = (\lambda_0,\dotsc,\lambda_{\tau-1})$ with
$\lambda_0 \geq \lambda_1 \geq \dotsb \geq \lambda_{\tau-1} \geq 0$, i.e. a
partition of the integer
$\abs{\lambda} \coloneqq \lambda_0 + \dotsb + \lambda_{\tau-1}$ with
$\len{\lambda} \leq \tau$ parts. Given two such partitions $\lambda^1$ and
$\lambda^2$, we will define their sum component by component, i.e.
$\lambda^1 + \lambda^2
= (\lambda^1_0 + \lambda^2_0, \dotsc, \lambda^1_{\tau-1} + \lambda^2_{\tau-1})$.
Also, let $\mu \coloneqq (\tau-2, \tau-3, \dotsc, 0)$ be the minimal such
partition with distinct parts. Then, the \emph{algernating polynomial}
corresponding to $\lambda$ is
\[
  a_{\lambda+\mu}(T_0,\dotsc,T_{\tau-1})
  \coloneqq \det\paren*[\big]{T_i^{\lambda_j+\mu_j}}
  =
  \begin{vmatrix}
    T_0^{\lambda_0 + \mu_0}        & \cdots
      & T_0^{\lambda_{\tau-1}+\mu_{\tau-1}} \\
    \vdots                         &
      & \vdots \\
    T_{\tau-1}^{\lambda_0 + \mu_0} & \cdots
      & T_{\tau-1}^{\lambda_{\tau-1}+\mu_{\tau-1}} \\
  \end{vmatrix}.
\]

\begin{example}
  The alternating polynomial corresponding to $(0,\dotsc,0)$ is the Vandermonde
  polynomial on $\bm{T}$, i.e. $a_\mu = \Vand(\bm{T})$.
\end{example}

\begin{definition}
  \label{def:schur_polynomial}
  By Cauchy's Bi-Alternant Formula we know that $a_\mu$ divides
  $a_{\lambda + \mu}$ for every partition $\lambda$ (see \cite{Stemb2002} for a
  concise proof). Further, the quotient is a symmetric polynomial, and we define
  the \emph{Schur polynomial} in $\tau$ variables corresponding to $\lambda$ as
  \[
    s_\lambda \coloneqq \frac{a_{\lambda+\mu}}{a_\mu}.
  \]
  We can also extend this to $\len{\lambda} > \tau$ by setting $s_\lambda = 0$,
  and we will denote by $\SchurD{\tau}{k}{\bm{T}}$ the collection of all the
  $s_\lambda(\bm{T})$ with $\abs{\lambda} \leq k$ and $\ell(\lambda) \leq \tau$
  (c.f. the definition of $\Schur[n,\tau][T]$ at \eqref{eq:schur_map}).
  Note that $\#\SchurD{\tau}{k}{\bm{T}} = \binom{k + \tau - 1}{\tau}$.
\end{definition}

One can show that $s_\lambda$ is symmetric and homogeneous of degree
$\abs{\lambda}$, which makes it straighforward to see that
\[
  s_\lambda(T_0,\dotsc,T_{\len{\lambda}},0,\dotsc,0)
  = s_\lambda(T_0,\dotsc,T_{\len{\lambda}})
\]
when $\len{\lambda} < \tau$. There is a wealth of literature about Schur
polynomials, and the interested reader is invited to consult either I. G.
Macdonald's book \cite{Macdo1995}, or \cite{Egge2019} for a more gentle
introduction. In particular, we will need the following result.

\begin{proposition}[{\cite[(3.3), p.~41]{Macdo1995}}]
  \label{prop:schur_linear_basis}
  The Schur polynomials in $\SchurD{\tau}{k}{\bm{T}}$ form a basis for
  $\sympolybounded{\tau}{k}$ as a module over $\Q$.
\end{proposition}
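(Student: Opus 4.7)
The plan is to pass through the well-known basis of monomial symmetric polynomials and show that the change of basis to Schur polynomials is unitriangular with respect to the dominance order on partitions; invertibility of the transition matrix will then yield the claim.

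First, I would recall that the monomial symmetric polynomials
\[
  m_\lambda(\bm{T}) \coloneqq
    \sum_\alpha T_0^{\alpha_0} \cdots T_{\tau-1}^{\alpha_{\tau-1}},
\]
where $\alpha$ ranges over the distinct permutations of $(\lambda_0,\dotsc,\lambda_{\tau-1})$, form a $\Q$-basis of $\sympolybounded{\tau}{k}$ as $\lambda$ ranges over partitions with $\abs{\lambda} \leq k$ and $\len{\lambda} \leq \tau$: spanning is immediate because any symmetric polynomial is a sum of $S_\tau$-orbits of monomials, and linear independence follows from the fact that distinct $m_\lambda$ are supported on disjoint sets of monomials.

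The key step is to prove that for each such $\lambda$ one has
\[
  s_\lambda \;=\; m_\lambda \;+\; \sum_{\nu \triangleleft \lambda} K_{\lambda\nu}\, m_\nu,
\]
where the sum runs over partitions $\nu$ of $\abs{\lambda}$ with $\len{\nu} \leq \tau$ strictly dominated by $\lambda$. I would argue this directly from the bi-alternant formula: expanding
\[
  a_{\lambda + \mu}
    \;=\; \sum_{\sigma \in S_\tau} \mathrm{sgn}(\sigma)\,
      T_{\sigma(0)}^{\lambda_0 + \mu_0} \cdots T_{\sigma(\tau-1)}^{\lambda_{\tau-1} + \mu_{\tau-1}},
\]
its leading monomial in lex order is $T_0^{\lambda_0 + \mu_0} \cdots T_{\tau-1}^{\lambda_{\tau-1}+\mu_{\tau-1}}$ with coefficient $+1$, and similarly $a_\mu$ has leading monomial $T_0^{\mu_0} \cdots T_{\tau-1}^{\mu_{\tau-1}}$ with coefficient $+1$. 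Polynomial long division then gives $s_\lambda = a_{\lambda+\mu}/a_\mu$ with leading monomial $T_0^{\lambda_0}\cdots T_{\tau-1}^{\lambda_{\tau-1}}$ of coefficient $1$, which by symmetry forces the coefficient of $m_\lambda$ in $s_\lambda$ to be $1$. A careful monomial analysis of the identity $s_\lambda \cdot a_\mu = a_{\lambda+\mu}$ then shows that the decreasing rearrangement of every exponent vector that occurs in $s_\lambda$ must be dominated by $\lambda$.

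Finally, refining the dominance order to any linear extension, the resulting transition matrix from $\{s_\lambda\}$ to $\{m_\lambda\}$ is unitriangular and therefore invertible over $\Q$, so the two families span the same space and $\SchurD{\tau}{k}{\bm{T}}$ is a $\Q$-basis of $\sympolybounded{\tau}{k}$. The main obstacle is establishing the dominance-triangularity from the bi-alternant cleanly; a slicker route is to invoke the semistandard Young tableaux formula $s_\lambda = \sum_T \bm{T}^{\mathrm{cont}(T)}$ (sum over SSYT of shape $\lambda$ with entries in $\{0,\dotsc,\tau-1\}$), from which triangularity is immediate since the content of any such tableau is dominated by $\lambda$, with equality only for the tableau whose $i$-th row is filled with $i$'s; however this formula itself requires justification (e.g.\ via Jacobi--Trudi), so I would either cite \cite{Macdo1995} for it or carry out the bi-alternant argument above.
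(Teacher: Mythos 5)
Your argument is correct, and it is essentially the proof behind the result as stated in the cited source: the paper itself gives no proof, quoting \cite[(3.3), p.~41]{Macdo1995}, and Macdonald's argument is exactly the unitriangular change of basis from the monomial symmetric polynomials that you describe (the identity $s_\lambda = m_\lambda + \sum_{\nu \triangleleft \lambda} K_{\lambda\nu} m_\nu$). Since both families are homogeneous, the argument applies degree by degree and assembles into a block-diagonal unitriangular transition matrix on all of $\sympolybounded{\tau}{k}$, which is what the statement needs.

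One simplification worth noting: the ``main obstacle'' you identify --- full dominance-triangularity --- is not actually needed for the basis claim. Your leading-monomial computation already shows that the lexicographically largest monomial of $s_\lambda$ is $T_0^{\lambda_0}\cdots T_{\tau-1}^{\lambda_{\tau-1}}$ with coefficient $1$ (leading monomials multiply, and $a_{\lambda+\mu}$, $a_\mu$ have distinct exponent vectors in each expansion, so their lex-maxima are the sorted ones). By symmetry the lex-maximal monomial of $s_\lambda$ has weakly decreasing exponent vector, so every other $m_\nu$ occurring in $s_\lambda$ has $\nu <_{\mathrm{lex}} \lambda$. Unitriangularity with respect to the lexicographic order on partitions of each fixed size then gives invertibility of the transition matrix directly, with no need for the Kostka/SSYT machinery or a careful dominance analysis of $s_\lambda \cdot a_\mu = a_{\lambda+\mu}$.
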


\begin{corollary}
  \label{cor:symmetric_goodness_equivalence}
  The following are equivalent:
  \begin{itemize}
    \item $(\bm{f},\nu)$ is non-symmetric of degree $k$ on $\domf$;
    \item for every $x \in \domf \cap \supp\nu$, every neighbourhood
      $B \ni x$, and every partition $\lambda$ with $\abs{\lambda} \leq k$ and
      $\len{\lambda} \leq \tau$, the restrictions of $s_{\lambda}(\bm{f})$ to
      $B \cap \supp\nu$ are linearly independent over $\Q$.
    \item $(\Schur^k \circ \bm{f})_* \nu$ is rationally non-planar.
  \end{itemize}
\end{corollary}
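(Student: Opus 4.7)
The plan is to prove the three conditions equivalent via (1)~$\Leftrightarrow$~(2)~$\Leftrightarrow$~(3), with Proposition~\ref{prop:schur_linear_basis} as the central tool: since the Schur polynomials form a $\Q$-basis of $\sympolybounded{\tau}{k}$, any statement about arbitrary symmetric polynomials of degree at most $k$ reduces to one about rational linear combinations of the $s_\lambda$.

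For (1)~$\Leftrightarrow$~(2), I would unpack Definition~\ref{def:non_symmetric}: $(\bm{f},\nu)$ being non-symmetric of degree $k$ on $\domf$ is equivalent to saying that for every $x \in \domf \cap \supp\nu$ and every neighbourhood $B \ni x$, no nonzero $s \in \sympolybounded{\tau}{k}$ satisfies $s \circ \bm{f} \equiv 0$ on $B \cap \supp\nu$. Writing $s = \sum_\lambda c_\lambda s_\lambda$ uniquely in the Schur basis supplied by Proposition~\ref{prop:schur_linear_basis}, the relation $s \circ \bm{f} \equiv 0$ on $B \cap \supp\nu$ becomes a nontrivial linear dependence over $\Q$ among the restrictions $\{s_\lambda(\bm{f})|_{B \cap \supp\nu}\}_\lambda$, and the absence of such $s$ is precisely the linear independence asserted in (2).

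For (2)~$\Leftrightarrow$~(3), every rational affine hyperplane $H \subset \R^{\#\SchurD{\tau}{k}{\bm{T}}}$ is the zero set of an equation $\sum_\lambda c_\lambda y_\lambda = c_0$ with $c_\lambda, c_0 \in \Q$ and the $c_\lambda$ not all zero; its preimage under $\Schur^k \circ \bm{f}$ is the zero set of $t \circ \bm{f}$, where $t = \sum_\lambda c_\lambda s_\lambda - c_0 \in \sympolybounded{\tau}{k}$ is nonzero. The direction (3)~$\Rightarrow$~(2) is easy by contrapositive: if (2) fails at some $x \in \supp\nu$ with witness ball $B$, then some such $t$ has $t \circ \bm{f} \equiv 0$ on $B \cap \supp\nu$, and hence $(\Schur^k \circ \bm{f})_*\nu(H) \geq \nu(B \cap \supp\nu) > 0$, contradicting (3). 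The converse (2)~$\Rightarrow$~(3) must propagate a positive-measure preimage of such an $H$ back to a local vanishing of the corresponding $t \circ \bm{f}$ on some $B \cap \supp\nu$.

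The main obstacle is exactly this last step. The natural approach exploits the continuity of $t \circ \bm{f}$, so that its zero set is closed in $\domf$, and then uses the definition of $\supp\nu$ together with the Federer/density properties available on $\nu$ in this setting to conclude that a positive-measure closed subset of $\supp\nu$ must contain a relatively open piece $B \cap \supp\nu$, yielding the required local vanishing and contradicting (2).
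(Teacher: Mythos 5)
Your reduction of (1) to (2) is exactly the intended argument: the corollary is stated immediately after Proposition~\ref{prop:schur_linear_basis} precisely because expanding an arbitrary nonzero $s \in \sympolybounded{\tau}{k}$ in the Schur basis over $\Q$ converts ``no nonzero $s$ vanishes on $\bm{f}(B \cap \supp\nu)$'' into the $\Q$-linear independence of the restrictions $s_\lambda(\bm{f})|_{B\cap\supp\nu}$, and conversely. Your direction (3)~$\Rightarrow$~(2) is also sound.

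The gap is where you suspected it, in (2)~$\Rightarrow$~(3), and the repair you sketch does not close it. A closed subset of $\supp\nu$ of positive measure need not contain any relatively open piece $B \cap \supp\nu$: take $\nu$ to be Lebesgue measure on an interval, so that $\supp\nu$ is the whole interval, and let the closed set be a fat Cantor set, which has positive measure and empty interior; the zero set of a continuous function can perfectly well be such a set. The Federer property does not exclude this (Lebesgue measure is Federer), and the Lebesgue density theorem only produces points of density one, not interior points. So from $\nu\bigl((\Schur^k\circ\bm{f})^{-1}(H)\bigr) > 0$ one cannot in general deduce that $t\circ\bm{f}$ vanishes identically on some $B \cap \supp\nu$. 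The implication does hold when the pushforward measure is decaying (cf.\ Remark~\ref{rmk:friendly_measures}): letting $\varepsilon \to 0$ in \eqref{eq:decaying_measure} forces $\nu(B \cap \plane) = 0$ whenever $\norm{d_\plane}_{\nu,B} > 0$, i.e.\ whenever the support is not locally contained in $\plane$; it also holds when the functions $s_\lambda(\bm{f})$ are analytic and $\nu$ gives no mass to proper analytic subvarieties. Without some such hypothesis, condition (3) is genuinely stronger than the support-theoretic conditions (1) and (2), so any proof of (2)~$\Rightarrow$~(3) must invoke more than continuity and general measure theory. A smaller point: since $s_{(0,\dotsc,0)} = 1$ is a component of $\Schur^k$, the image of $\Schur^k\circ\bm{f}$ lies in the rational affine hyperplane where that coordinate equals $1$, and for this hyperplane your polynomial $t$ is identically zero; your assertion that $t \neq 0$ therefore needs this degenerate hyperplane to be excluded (or (3) to be read with linear hyperplanes, the constant term being carried by the component $s_{(0,\dotsc,0)}$).
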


\begin{remark}
  \label{rmk:friendly_measures}
  It follows that $(\bm{f},\nu)$ is symmetrically good of degree $k$ if and only
  if $(\Schur^k \circ \bm{f})_* \nu$ is decaying and rationally non-planar. C.f.
  the notion of \emph{friendly} measure from \cite{KleinLW2004}, i.e. a measure
  that is Federer, decaying and non-planar.
\end{remark}

We conclude this section with some criteria to estimate $\symord{\bm{f}}$.

\begin{proposition}
  \label{prop:symmetry_independence_upper_bound}
  Let $\bm{f} = (f_0,\dotsc,f_{\tau-1})$. Then for every $2 \leq t \leq \tau$
  and for every $I \in \indexsets{\tau}{t}$ we have
  \[
    \symord{\bm{f}} < \frac{n!}{t!}(\symord{\bm{f}_I}+1).
  \]
  \begin{proof}
    Fix $t,I$, and let $s \in \sympolyring{t}$ be a polynomial of degree
    $\symord{\bm{f}_I}+1$ such that $s(\bm{f}_I) = 0$. Since $s$ is symmetric we
    know that $S_t$ fixes $s$, thus there is a well defined action of
    $G_t \coloneqq S_\tau / S_t$ on the image of $s$ under the inclusion
    $\Q[T_0,\dotsc,T_{t-1}] \subset \Q[T_0,\dotsc,T_{\tau-1}]$. It follows that
    $s^G$ is a symmetric polynomial in $\tau$ variables of degree
    $\deg(s) \#G$ such that $s^G(\bm{f}) = 0$.
  \end{proof}
\end{proposition}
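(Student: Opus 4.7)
My approach would be symmetrization: take a witnessing symmetric polynomial relation for $\bm{f}_I$ and build from it a symmetric polynomial relation for all of $\bm{f}$ via an orbit product under the $S_\tau$-action that permutes the $\tau$ variables.

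First, I would invoke the definition of $\symord{\bm{f}_I}$ to produce a nonzero $s \in \sympolybounded{t}{k+1}$, with $k \coloneqq \symord{\bm{f}_I}$, such that $s(\bm{f}_I) = 0$. After relabeling, I may assume $I = \{0,\dotsc,t-1\}$, so that $s$ sits naturally inside $\Q[T_0,\dotsc,T_{\tau-1}]$ as a polynomial not involving the last $\tau - t$ variables. This embedded $s$ is fixed by the subgroup $S_t \subset S_\tau$ permuting the first $t$ variables (and, in fact, also by $S_{\tau-t}$ permuting the last ones, but only the $S_t$-invariance is needed).

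Next, let $G \coloneqq S_\tau/S_t$ denote the set of left cosets and form the orbit product $s^G \coloneqq \prod_{gS_t \in G} g \cdot s$. Because $s$ is $S_t$-invariant, the factor $g \cdot s$ depends only on the coset $gS_t$, so the product is well-defined despite $S_t$ not being normal in $S_\tau$. Left multiplication by any $\sigma \in S_\tau$ just permutes the cosets, so $s^G$ is genuinely $S_\tau$-symmetric. Its degree is bounded by $\deg(s)\cdot |G| \leq (k+1)\,\tau!/t!$, and it is nonzero as an element of the integral domain $\Q[T_0,\dotsc,T_{\tau-1}]$ since every factor is nonzero. Crucially, $s^G(\bm{f}) = 0$, because the identity coset contributes the factor $s(\bm{f}) = s(\bm{f}_I) = 0$.

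Applying the definition of $\symord{\bm{f}}$ to the polynomial $s^G$ then yields $\symord{\bm{f}} < (\symord{\bm{f}_I}+1)\,\tau!/t!$, which matches the claimed bound (I suspect the $n!$ in the statement is a typo for $\tau!$). The only bookkeeping that needs care is checking well-definedness of the coset product and its full $S_\tau$-symmetry, which reduces to the right-$S_t$-invariance of $s$; I do not anticipate a more substantial obstacle.
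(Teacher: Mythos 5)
Your proposal is essentially identical to the paper's proof: both take a symmetric $s$ of degree $\symord{\bm{f}_I}+1$ vanishing on $\bm{f}_I$, use its $S_t$-invariance to get a well-defined coset action of $S_\tau/S_t$, and form the orbit product $s^G$, a symmetric polynomial in $\tau$ variables of degree $\deg(s)\cdot\#G$ vanishing on $\bm{f}$. Your reading of the bound as $\tau!/t!$ (rather than $n!/t!$) is also the intended one, so there is nothing to add.
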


Now let $d,N$ be positive integers, and for every $0 \leq s \leq N-1$ let
$\diff{s}$ be a differential operator of the form
\begin{equation}
  \label{eq:differential_operator}
  \diff{s} = \paren{\frac{\partial}{\partial x_0}}^{j_0}
             \dotsm
             \paren{\frac{\partial}{\partial x_{d-1}}}^{j_{d-1}}
  \quad
  \text{where } j_0 + \dotsb + j_{d-1} \leq s.
\end{equation}
Given a $\cont[N-1]$ map $\bm{g}(x_0,\dotsc,x_{d-1})$ with $N-1$ components,
define the \emph{generalised Wronskian of $\bm{g}$ associated with
$\diff{0},\dotsc,\diff{N-2}$} to be the determinant
\[
  \det\paren{\diff{i}(g_j)} =
  \begin{vmatrix}
    \diff{0}(g_0)   & \dotsc & \diff{0}(g_{N-1})   \\
    \vdots          &        & \vdots              \\
    \diff{N-1}(g_0) & \dotsc & \diff{N-1}(g_{N-1})
  \end{vmatrix}.
\]
This definition can also be extended to the case where $g_0,\dotsc,g_{N-1}$ are
formal power series with coefficients in a field $K$. Furthermore, note that if
the components of $\bm{g}$ are linearly dependent, then all of its generalised
Wronskians vanish. In \cite{BostaD2010}, Bostan and Dumas proved the following
partial converse.

\begin{theorem}[{\cite[Theorem~3]{BostaD2010}}]
  \label{thm:formal_series_wronskian}
  Let $g_0,\dotsc,g_{N-1}$ be formal power series with coefficients in a field
  $K$ of charactristic $0$. If they are linearly independent over $K$, then at
  least one of their generalised Wronskians is non-zero.
\end{theorem}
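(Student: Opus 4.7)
I would proceed by induction on $N$, modeled on the classical one-variable Wronskian argument. The base case $N = 1$ is immediate: taking $\Delta_0 = \mathrm{id}$ (of order $0$) gives the $1 \times 1$ generalised Wronskian $g_0$, which is non-zero by assumption.

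For the inductive step, assume the result for $N-1$, let $g_0, \dotsc, g_{N-1}$ be linearly independent over $K$, and suppose for contradiction that every generalised Wronskian of these series vanishes identically. By the induction hypothesis applied to the linearly independent subfamily $g_0, \dotsc, g_{N-2}$, there exist operators $\Delta_0, \dotsc, \Delta_{N-2}$ with $\mathrm{ord}(\Delta_s) \leq s$ such that $W' \coloneqq \det(\Delta_i g_j)_{0 \leq i,j \leq N-2}$ is a non-zero power series. For every differential operator $\Delta$ with $\mathrm{ord}(\Delta) \leq N-1$, expanding the $N \times N$ Wronskian along the row $\Delta$ gives
\[
  D(\Delta) = \sum_{j=0}^{N-1} (-1)^{N-1-j}\, M_j\, \Delta g_j,
\]
where $M_j$ is the $(N-1)$-minor obtained by deleting column $j$ from the first $N-1$ rows and $M_{N-1} = W' \neq 0$. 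The contradiction hypothesis forces $D(\Delta) \equiv 0$ for every such $\Delta$. Taking $\Delta = \mathrm{id}$ yields a single $F$-linear relation $\sum_j (-1)^{N-1-j} M_j g_j = 0$ over the fraction field $F$ of $K[[x_1,\dotsc,x_d]]$; applying $\partial_k$ to this relation and using the vanishing $D(\partial_k) \equiv 0$ to cancel the $M_j (\partial_k g_j)$ terms produces a new $F$-linear relation $\sum_j (-1)^{N-1-j} (\partial_k M_j) g_j = 0$. Iterating with higher derivatives $\partial^{\alpha}$ of order at most $N-1$, and using the full family of vanishing relations to cancel terms of the form $(\partial^{\beta} M_j)(\partial^{\gamma} g_j)$ produced by Leibniz, yields a whole system of $F$-linear relations among $g_0, \dotsc, g_{N-1}$ whose coefficients are iterated derivatives of the $M_j$.

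\textbf{Main obstacle.} The crucial step is turning this system of $F$-linear relations into a genuine $K$-linear dependence, as the theorem requires. In the one-variable case the vanishings $D(\partial^0) = \dotsb = D(\partial^{N-1}) = 0$ say exactly that $g_0, \dotsc, g_{N-1}$ all satisfy the same linear ODE of order $N-1$, whose solution space has $K$-dimension $N-1$, giving an immediate contradiction. In several variables the space of admissible $\Delta$ is much larger, and the extracted system is an involutive family of linear PDEs; bounding the $K$-dimension of its solution space by $N-1$ requires combining commutativity of the partial derivations with the characteristic-zero hypothesis on $K$, which guarantees that the common field of constants $\bigcap_k \ker(\partial_k)$ inside $F$ is exactly $K$. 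This is where the characteristic-zero assumption really enters and is the main technical content of the proof.
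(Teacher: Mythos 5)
This is a quoted result (Bostan--Dumas, Theorem~3); the paper does not prove it, so your attempt has to stand on its own. It does not: the ``main obstacle'' paragraph is an honest admission that the central step is missing, and that step is not a routine verification --- it is essentially the whole theorem. Concretely, after expanding the $N\times N$ Wronskians you obtain relations $\sum_j (-1)^{N-1-j} M_j\, g_j = 0$ and $\sum_j (-1)^{N-1-j} M_j\, \Delta g_j = 0$ with coefficients $M_j$ in the fraction field $F$ of $K[[x_1,\dotsc,x_d]]$, and you want to differentiate and cancel so as to conclude that the ratios $M_j/M_{N-1}$ are constants, hence (in characteristic $0$) lie in $K$. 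But already the first cancellation, passing from $\sum_j (\partial_k c_j)\, g_j = 0$ (with $c_{N-1}=1$, so $\partial_k c_{N-1}=0$) to $\partial_k c_j = 0$ for all $j$, requires that $g_0,\dotsc,g_{N-2}$ be linearly independent \emph{over $F$}. Your induction hypothesis only gives $K$-linear independence and a single non-vanishing generalised Wronskian of $g_0,\dotsc,g_{N-2}$; neither implies $F$-linear independence (one cannot pull coefficients from $F$ through the operators $\Delta_i$), so the argument is circular at exactly the point where it needs to close. The appeal to ``involutivity'' and to a dimension bound for the solution space of the resulting PDE system is a statement of what would need to be proved, not a proof; this is the hard part of Ostrowski-style arguments in several variables.

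It is worth knowing that the actual Bostan--Dumas proof sidesteps all of this with a short coefficient-extraction argument. Since $g_0,\dotsc,g_{N-1}$ are linearly independent over $K$, the (infinite) matrix whose rows are indexed by monomials $x^{\beta}$ and whose $(\beta,j)$ entry is the coefficient of $x^{\beta}$ in $g_j$ has rank $N$, and one shows that $N$ rows $\beta_0,\dotsc,\beta_{N-1}$ forming an invertible $N\times N$ submatrix can be chosen with $\lvert\beta_i\rvert \leq i$ (this combinatorial selection lemma is the real content). Taking $\Delta_i = (\beta_i!)^{-1}\partial^{\beta_i}$, which is admissible since $\mathrm{ord}(\Delta_i)\leq i$, the constant term of the generalised Wronskian $\det(\Delta_i g_j)$ equals $\det\bigl([x^{\beta_i}]g_j\bigr) \neq 0$, because the constant term of a determinant of power series is the determinant of the constant terms. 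If you want a complete proof, I would redirect your effort toward that selection lemma rather than toward repairing the differential-algebraic route.
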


\begin{corollary}[Wronskian Criterion]
  \label{cor:wronskian_criterion}
  Let $\bm{g} = (g_1,\dotsc,g_{N-1})$ be a $\cont[N-1]$ real valued map. If at
  least one of the generalised Wronskians of $\bm{g}$ is non-zero, then
  $g_1,\dotsc,g_N$ are linearly independent over $\R$, and the converse holds
  when $\bm{g}$ is analytic.
\end{corollary}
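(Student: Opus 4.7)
The proof splits naturally into the two implications, and the bulk of the work is already contained in Theorem~\ref{thm:formal_series_wronskian}.

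For the forward direction I would argue by contrapositive: if $g_0,\dotsc,g_{N-1}$ are linearly dependent over $\R$, say $\sum_j c_j g_j \equiv 0$ with not all $c_j$ zero, then applying any of the differential operators $\diff{i}$ from \eqref{eq:differential_operator} gives $\sum_j c_j \diff{i}(g_j) \equiv 0$. Hence the columns of the matrix $\bigl(\diff{i}(g_j)\bigr)$ satisfy a non-trivial linear relation at every point, so its determinant vanishes identically. Since the choice of $\diff{0},\dotsc,\diff{N-1}$ was arbitrary, every generalised Wronskian of $\bm{g}$ is identically zero, contradicting the hypothesis. This direction uses only the $\cont[N-1]$ regularity and not analyticity.

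For the converse, assume $\bm{g}$ is analytic and that $g_0,\dotsc,g_{N-1}$ are linearly independent over $\R$. Pick any point $\bm{x}_0$ in the connected component of $\domf$ on which linear independence fails to propagate (if no such component exists, i.e.\ the $g_j$ are already dependent on every component, one concludes global dependence by choosing the relation coefficientwise). Let $\widetilde g_j \in \R\llbracket x_0 - x_{0,0},\dotsc,x_{d-1} - x_{0,d-1} \rrbracket$ denote the Taylor expansion of $g_j$ at $\bm{x}_0$. The identity principle for analytic functions implies that the $\widetilde g_j$ are linearly independent over $\R$ as formal power series: any relation on the Taylor side would, on the connected component containing $\bm{x}_0$, give a relation among the $g_j$ contradicting the standing assumption. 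Theorem~\ref{thm:formal_series_wronskian} (applied with $K = \R$) then yields a tuple $\diff{0},\dotsc,\diff{N-1}$ of differential operators of the form \eqref{eq:differential_operator} for which $\det\bigl(\diff{i}(\widetilde g_j)\bigr) \neq 0$ in $\R\llbracket \cdot \rrbracket$. Since differentiation commutes with passing to the Taylor series, the analytic function $\det\bigl(\diff{i}(g_j)\bigr)$ has a non-zero Taylor series at $\bm{x}_0$, hence is not identically zero on any neighbourhood of $\bm{x}_0$, as required.

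The argument is essentially a direct translation of Theorem~\ref{thm:formal_series_wronskian} from the formal to the analytic category; the only subtlety worth flagging is the passage between linear independence of functions and linear independence of power series, which is why the converse genuinely needs analyticity (it is well known to fail in the merely smooth setting, e.g.\ for suitable bump functions). Beyond that, there is no real obstacle: the forward direction is a one-line linearity argument, and the converse is an immediate consequence of the cited theorem.
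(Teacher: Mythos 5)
Your proof is correct and is exactly the argument the paper intends: the corollary is stated without proof as an immediate consequence of Theorem~\ref{thm:formal_series_wronskian}, the forward direction being the linearity observation the paper itself makes just before that theorem, and the converse being obtained by passing to Taylor series at a point and noting that differentiation commutes with Taylor expansion. One caveat: the parenthetical in your converse about disconnected domains is false --- the $g_j$ can be linearly dependent on every connected component with \emph{different} relations (e.g.\ indicator functions of the components, which are locally constant hence analytic), making them globally independent while every generalised Wronskian vanishes identically --- so the converse genuinely requires a connected domain, a hypothesis the paper's statement also tacitly assumes.
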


\begin{corollary}
  \label{cor:analytic_symmetric_independence}
  Let $k,\tau$ be positive integers and let $N = \binom{k + \tau - 1}{\tau}$.
  If $\bm{f}$ is a $\cont[N-1]$ real valued map with $\tau$ components and at
  least one of the generalised Wronskians of $\SchurD{\tau}{k}{\bm{f}}$ is
  non-zero, then $\symord{f} \geq k$.
\end{corollary}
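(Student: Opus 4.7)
\medskip

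\textbf{Proof plan.} The strategy is a direct composition of the two preceding results, namely the $\Q$-basis property of the Schur polynomials (Proposition~\ref{prop:schur_linear_basis}) and the Wronskian criterion (Corollary~\ref{cor:wronskian_criterion}). By Proposition~\ref{prop:schur_linear_basis}, the family $\SchurD{\tau}{k}{\bm{T}}$ is a $\Q$-basis of $\sympolybounded{\tau}{k}$. Thus any $s\in\sympolybounded{\tau}{k}$ admits a unique expansion $s = \sum_{\lambda} c_\lambda\, s_\lambda$ with $c_\lambda\in\Q$, where $\lambda$ runs over the partitions with $\abs{\lambda}\le k$ and $\len{\lambda}\le\tau$. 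Showing $\symord{\bm{f}}\ge k$ then reduces to showing that for every such $s\neq 0$, the function $s(\bm{f}) = \sum_\lambda c_\lambda\, s_\lambda(\bm{f})$ is not identically zero on the domain, i.e.\ that the functions $\{s_\lambda(\bm{f})\}$ are linearly independent over $\Q$.

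First I would invoke the hypothesis: since by assumption at least one of the generalised Wronskians of the $N = \#\SchurD{\tau}{k}{\bm{T}}$ functions in $\SchurD{\tau}{k}{\bm{f}}$ is non-zero, Corollary~\ref{cor:wronskian_criterion} applies (with the map $\bm{g} = \SchurD{\tau}{k}{\bm{f}}$, which is $\cont[N-1]$ because $\bm{f}$ is and the Schur polynomials are polynomials). This yields the $\R$-linear independence of the $s_\lambda(\bm{f})$ as real-valued functions on the domain of $\bm{f}$. Linear independence over $\R$ trivially implies linear independence over $\Q$, so no non-trivial $\Q$-linear combination $\sum_\lambda c_\lambda\, s_\lambda(\bm{f})$ can vanish identically.

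Combining the two observations: take any non-zero $s\in\sympolybounded{\tau}{k}$, expand it in the Schur basis to get $s = \sum_\lambda c_\lambda\, s_\lambda$ with some $c_\lambda\neq 0$, and conclude that $s(\bm{f})$ is a non-trivial $\Q$-linear combination of $\Q$-linearly independent functions, hence $s(\bm{f}) \not\equiv 0$. This is precisely the assertion $\symord{\bm{f}}\ge k$.

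There is no real obstacle here, since both ingredients are already in hand; the only points worth being explicit about are (i) the regularity: $\SchurD{\tau}{k}{\bm{f}}$ is $\cont[N-1]$ because $\bm{f}$ is, so Corollary~\ref{cor:wronskian_criterion} genuinely applies, and (ii) the passage from $\R$-linear to $\Q$-linear independence, which is immediate and makes the argument independent of whether one defines $\sympolybounded{\tau}{k}$ with rational or real coefficients.
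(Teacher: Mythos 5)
Your argument is correct and is precisely the intended one: the paper leaves this corollary without an explicit proof because it follows exactly as you say, by combining Proposition~\ref{prop:schur_linear_basis} (so that any non-zero $s\in\sympolybounded{\tau}{k}$ is a non-trivial rational combination of the $N$ elements of $\SchurD{\tau}{k}{\bm{T}}$) with the Wronskian Criterion of Corollary~\ref{cor:wronskian_criterion} applied to $\SchurD{\tau}{k}{\bm{f}}$, which gives $\R$-linear (hence $\Q$-linear) independence of the $s_\lambda(\bm{f})$ and therefore $s(\bm{f})\not\equiv 0$. Your two explicit remarks, on the $\cont[N-1]$ regularity of the composed map and on the passage from $\R$- to $\Q$-independence, are exactly the points that make the reduction legitimate.
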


To simplify the proof of the final result we will rely on another special kind
of symmetric polynomial, the \emph{monomial symmetric polynomials} $m_\lambda$.
Let $\lambda$ be a partition of integers with at most $\tau$ parts; then
$m_\lambda$ is defined as
\[
  m_\lambda
  \coloneqq \sum_\sigma T_0^{\sigma(\lambda_0)} \dotsm
                        T_{\tau-1}^{\sigma(\lambda_{\tau-1})}
\]
where $\sigma$ runs over the distinct permutations of
$\lambda_0,\dotsc,\lambda_{\tau-1}$. Again, it can be shown that the collection
of monomial symmetric polynomials corresponding to $\lambda$ with
$\abs{\lambda} \leq k$ and $\len{\lambda} = \tau$ forms a basis for
$\sympolybounded{\tau}{k}$ as a module over $\Q$.

\begin{proposition}
  \label{prop:minors_polynomial}
  Let $\bm{p} = (p_0, p_1)$ be a polynomial map with $\deg p_0 > \deg p_1$. Then
  $\symord{\bm{p}} \geq \frac{\deg p_0}{\deg p_1}$.
  \begin{proof}
    Let $d_i \coloneqq \deg p_i$, and note that if
    $\lambda = (\lambda_0,\lambda_1)$ is a partition with
    $k \geq \lambda_0 \geq \lambda_1 \geq 0$, then
    $\deg m_\lambda(\bm{p}) = d_0 \lambda_0 + d_1 \lambda_1$. We will show that
    the map $\lambda \mapsto \deg m_\lambda(\bm{p})$ is injective for
    $k \leq \frac{d_0}{d_1}$, which immediately gives a lower bound for
    $\symord{\bm{p}}$.

    Suppose that
    $d_0 \lambda^1_0 + d_1 \lambda^1_1 = d_0 \lambda^2_0 + d_1 \lambda^2_1$ for
    some $\lambda^1 \neq \lambda^2$, and without loss of generality assume
    $\lambda^1_1 > \lambda^2_1$. Then
    $d_1 (\lambda^1_1 - \lambda^2_1) = d_0 (\lambda^2_0 - \lambda^1_0)$, which
    results in
    \begin{align*}
      k &\geq \lambda^2_0 \\
        &= \lambda^1_0 + \frac{d_1}{d_0} (\lambda^1_1 - \lambda^2_1) \\
        &\geq 1 + \frac{d_1}{d_0} \qedhere
    \end{align*}
  \end{proof}
\end{proposition}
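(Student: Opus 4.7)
The plan is to exploit the fact (noted just after Definition~\ref{def:schur_polynomial}) that the monomial symmetric polynomials $m_\lambda$ with $\abs{\lambda} \leq k$ and $\len{\lambda} \leq 2$ form a $\Q$-basis of $\sympolybounded{2}{k}$. Hence, to show that no non-zero $s \in \sympolybounded{2}{k}$ vanishes on $\bm{p}$, it suffices to prove that the evaluated polynomials $m_\lambda(\bm{p})$ are $\Q$-linearly independent, and my strategy is to establish this via pairwise distinct degrees.

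Setting $d_i \coloneqq \deg p_i$ and writing $\lambda = (\lambda_0, \lambda_1)$ with $\lambda_0 \geq \lambda_1 \geq 0$, we have
\[
  m_\lambda(T_0, T_1) = T_0^{\lambda_0} T_1^{\lambda_1} + T_0^{\lambda_1} T_1^{\lambda_0},
\]
which collapses to a single monomial when $\lambda_0 = \lambda_1$. Since $d_0 > d_1$ and $\lambda_0 \geq \lambda_1$, the substitution $T_i = p_i$ produces a polynomial whose leading summand has degree $d_0 \lambda_0 + d_1 \lambda_1$, with the other summand (when present) of strictly smaller degree; therefore $\deg m_\lambda(\bm{p}) = d_0 \lambda_0 + d_1 \lambda_1$.

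The crux of the argument is to show that the map $\lambda \mapsto d_0 \lambda_0 + d_1 \lambda_1$ is injective on partitions with $\abs{\lambda} \leq d_0/d_1$. If two distinct such $\lambda^1 \neq \lambda^2$ had the same image, we could assume $\lambda^1_1 > \lambda^2_1$, yielding
\[
  d_1 (\lambda^1_1 - \lambda^2_1) = d_0 (\lambda^2_0 - \lambda^1_0)
\]
with both sides positive integers. The key observation is that this forces $\lambda^1_1 - \lambda^2_1 \geq d_0/d_1$ rather than merely $\geq 1$; combined with $\lambda^1_0 \geq \lambda^1_1$ we obtain $\abs{\lambda^1} \geq 2 \lambda^1_1 \geq 2\,d_0/d_1$, contradicting $\abs{\lambda^1} \leq d_0/d_1$.

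Once pairwise distinct degrees have been established the $m_\lambda(\bm{p})$ are automatically $\Q$-linearly independent, whence $\symord{\bm{p}} \geq d_0/d_1$. The main obstacle is the injectivity step, where one must leverage the divisibility of $d_0 (\lambda^2_0 - \lambda^1_0)$ by $d_1$ to upgrade the naive integer gap $\lambda^1_1 - \lambda^2_1 \geq 1$ into the sharper bound $\geq d_0/d_1$; without this refinement the same approach only produces the weaker estimate $\symord{\bm{p}} \gtrsim 1 + d_1/d_0$.
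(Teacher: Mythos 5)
Your proposal is correct and follows the same blueprint as the paper's proof: evaluate the monomial symmetric basis of $\sympolybounded{2}{k}$ at $\bm{p}$, note that $\deg m_\lambda(\bm{p}) = d_0\lambda_0 + d_1\lambda_1$, and deduce linear independence (hence non-vanishing of every non-zero $s \in \sympolybounded{2}{k}$ at $\bm{p}$) from injectivity of the degree map. Where you differ is in how the injectivity step is closed, and your version is the one that actually works. Starting from $d_1(\lambda^1_1-\lambda^2_1) = d_0(\lambda^2_0-\lambda^1_0)$, the paper uses $\lambda^1_1 - \lambda^2_1 \geq 1$ and concludes $k \geq \lambda^2_0 \geq 1 + d_1/d_0$; but $1 + d_1/d_0 < d_0/d_1$ as soon as $d_0/d_1$ exceeds the golden ratio, so this is not in conflict with $k \leq d_0/d_1$ and the paper's chain of inequalities falls short of the claimed bound. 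You instead use $\lambda^2_0 - \lambda^1_0 \geq 1$, which gives $\lambda^1_1 \geq \lambda^1_1 - \lambda^2_1 \geq d_0/d_1$ and hence $\abs{\lambda^1} \geq 2\lambda^1_1 \geq 2d_0/d_1 > k$, a genuine contradiction. Your refinement is therefore not cosmetic: it supplies the step the published argument is missing, and it even yields injectivity for all $k < 2d_0/d_1$, slightly more than the proposition requires.
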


\begin{example}
  At least when $\bm{p}(\bm{x}) = (p_0(\bm{x}),\dotsc,p_{\tau-1}(\bm{x}))$
  is a polynomial map with rational coefficients, we can compute
  $\symord{\bm{p}}$ with relative efficiency using variable elimination via
  Gr\"{o}bner bases. Even more, it is possible to describe all the symmetric
  polynomials that vanish on $\bm{p}$. Indeed, let $e_1,\dotsc,e_\tau$ be the
  elementary symmetric polynomials in $\tau$ variables, that is
  \[
    e_k(T_0,\dotsc,T_{\tau-1})
      \coloneqq \sum_{I \in \indexsets{\tau-1}{k}} T_{i_1} \dotsm T_{i_k}.
  \]
  Then it is well known that
  $\sympolyring{\tau} = \Q[e_1(\bm{T}),\dotsc,e_\tau(\bm{T})]$; in other words,
  every symmetric polynomial in $\bm{T}$ can be written as a polynomial in
  $e_1,\dotsc,e_\tau$. Now let $\bm{Y} = (Y_1,\dotsc,Y_\tau)$ and consider the ideal
  $\mathcal{I} \subset \Q[\bm{x},\bm{Y}]$ generated by the polynomials
  \[
    Y_k - e_k(\bm{p})
    \qquad \text{for }
    1 \leq k \leq \tau.
  \]
  It is possible to compute a Gr\"{o}bner basis $G$ for the ideal
  $\tilde{\mathcal{I}} \coloneqq \mathcal{I} \cap \Q[\bm{Y}]$ through standard
  algorithms, and we can see that every symmetric polynomial in
  $\sympolyring{\tau}$ which vanishes on $\bm{p}$ is of the form
  $h(e_1,\dotsc,e_\tau)$ for some $h \in \tilde{\mathcal{I}}$. In particular,
  \[
    \symord{\bm{p}} = \min_{g \in G} \deg(g(e_1,\dotsc,e_\tau)).
  \]
  As an example, these are the orders of symmetric independence for the Veronese
  curves of degree $\tau$ between $2$ and $10$, i.e. for
  $\bm{p}(x) = (x,x^2,\dotsc,x^\tau)$:
  \[
    \begin{array}{c | c c c c c c c c c}
      \tau            & 2 & 3 & 4 & 5 & 6 & 7 & 8 & 9 & 10 \\
      \hline
      \symord{\bm{p}} & 4 & 5 & 5 & 6 & 6 & 7 & 7 & 7 &  7
    \end{array}
  \]
\end{example}

\section{Good functions}
\label{sec:good_functions}

\begin{definition}
  \label{def:good_function}
  Let $X$ be a metric space and $\nu$ a Radon measure on $X$. Also consider an
  open subset $U \subseteq X$ and a $\nu$-measurable function
  $f \colon U \to \R$.
  For any open ball $B \subset U$ and $\varepsilon > 0$, define
  \[
    B^{f,\varepsilon} \defeq \{ x \in B : \abs{f(x)} < \varepsilon \}.
  \]
  Then we say that $f$ is \emph{$(C,\alpha)$-good on $U$ with respect to $\nu$} if
  there are constants $C,\alpha > 0$ such that for any open ball $B \subset U$
  centred on $\supp \nu$ we have
  \begin{equation}
    \label{eq:cnd:goodness}
    \nu\left(B^{f,\varepsilon}\right) \leq
    C \left( \frac{\varepsilon}{\norm{f}_{\nu,B}} \right)^\alpha \nu(B)
    \quad \text{for all } \varepsilon > 0,
  \end{equation}
  where $\norm{f}_{\nu,B} \defeq \sup_{x \in B \cap \supp\nu} \abs{f(x)}$. Also,
  when $X = \R^d$ and $\nu$ is the corresponding Lebesgue measure we will write
  $\norm{f}_B$ for $\norm{f}_{\nu,B}$, and we say that $f$ is
  \emph{absolutely $(C,\alpha)$-good on $U$ with respect to $\nu$} if
  \eqref{eq:cnd:goodness} holds wiht $\norm{f}_B$ in place of
  $\norm{f}_{\nu,B}$.
\end{definition}

Note that absolute $(C,\alpha)$-goodness implies
$(C,\alpha)$-goodness, while the converse holds for measures with full support.
The following properties are a direct consequence of the definition.

\begin{lemma}[{\cite[Lemma~3.1]{KleinT2007}}, {\cite[Lemma~3.1]{BerniKM2001}}]
  \label{lm:good_function_properties}
  \leavevmode
  \begin{enumerate}
    \item $f$ is $(C,\alpha)$-good on $U$ wrt $\nu$ if and only if so is
      $\abs{f}$.
    \item If $f$ is $(C,\alpha)$-good on $U$ wrt $\nu$, then so is $\lambda f$ for
      every $\lambda \in \R$.
    \item If $f$ is $(C,\alpha)$-good on $U$ wrt $\nu$, then it is also
      $(C',\alpha')$-good on $U'$ wrt $\nu$ for every $C' \geq C$,
      $\alpha' \leq \alpha$ and $U' \subseteq U$.
    \item If $\{f_i\}_{i \in I}$ is a collection of $(C,\alpha)$-good functions
      on $U$ wrt $\nu$ and the function $f \defeq \sup_{i \in I} \abs{f_i}$ is
      Borel measurable, then $f$ is also $(C,\alpha)$-good on $U$ wrt $\nu$.
    \item \label{lm:good_function_properties:ratio}
      If $f$ is $(C,\alpha)$-good on $U$ wrt $\nu$ and
      $\detmin \leq \frac{\abs{f(x)}}{\abs{g(x)}} \leq \detmax$ for every
      $x \in U \cap \supp \nu$, then $g$ is
      $(C(\detmax/\detmin)^\alpha, \alpha)$-good on $U$ wrt $\nu$.
  \end{enumerate}
  \begin{proof}[Proof of \ref{lm:good_function_properties:ratio}]
    Note that if $\varepsilon > \abs{g(x)} \geq \frac{\abs{f(x)}}{\detmax}$ on
    $U \cap \supp \nu$, then
    $B^{g,\varepsilon} \cap \supp \nu \subseteq
     B^{f,\detmax\varepsilon} \cap \supp \nu$ for every ball $B \subseteq U$.
    Furthermore,
    \[
      \detmin \norm{g(x)}_{\nu,B}
      = \sup_{x \in B \cap \supp \nu} \detmin\abs{g(x)}
      \leq \sup_{x \in B \cap \supp \nu} \abs{f(x)}
      = \norm{f(x)}_{\nu,B}.
    \]
    Therefore
    \begin{align*}
      \nu\left(B^{g,\varepsilon}\right)
      &\leq \nu\left(B^{f,\detmax\varepsilon}\right) \\
      &<    C \left( \frac{\detmax\varepsilon}{\norm{f}_{\nu,B}}
              \right)^\alpha \nu(B) \\
      &\leq C \left(\frac{\detmax}{\detmin}\right)^\alpha
              \left( \frac{\varepsilon}{\norm{g}_{\nu,B}}
              \right)^\alpha \nu(B).
              \qedhere
    \end{align*}
  \end{proof}
\end{lemma}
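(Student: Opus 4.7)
The plan is to unpack the definition of $(C,\alpha)$-goodness for each of the five properties, noting that the inequality in \eqref{eq:goodness_condition} compares only two quantities: the $\nu$-measure of the sublevel set $B^{f,\varepsilon}$ and the sup norm $\norm{f}_{\nu,B}$.

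Properties (1)--(4) reduce to routine bookkeeping about how these two quantities transform. For (1) and (2), both $B^{f,\varepsilon}$ and $\norm{f}_{\nu,B}$ depend only on $\abs{f}$, and replacing $f$ by $\lambda f$ (with $\lambda \neq 0$) rescales them in lockstep so that the ratio $\varepsilon/\norm{\lambda f}_{\nu,B}$ agrees with $(\varepsilon/\abs{\lambda})/\norm{f}_{\nu,B}$, whence the bound transfers from $f$ evaluated at level $\varepsilon/\abs{\lambda}$; the case $\lambda = 0$ is trivial. For (3) I would split into two cases: if $\varepsilon \geq \norm{f}_{\nu,B}$ the trivial inequality $\nu(B^{f,\varepsilon}) \leq \nu(B)$ already gives the bound for any $C' \geq 1$ and $\alpha' \geq 0$; otherwise the ratio $\varepsilon/\norm{f}_{\nu,B}$ lies in $(0,1)$, so shrinking $\alpha$ and enlarging $C$ only weakens the right-hand side. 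Restricting to $U' \subseteq U$ simply restricts the family of balls to which the hypothesis is applied. For (4) the key identities are $B^{f,\varepsilon} = \bigcap_i B^{f_i,\varepsilon}$ and $\norm{f}_{\nu,B} = \sup_i \norm{f_i}_{\nu,B}$; applying the hypothesis to any single $f_i$ gives $\nu(B^{f,\varepsilon}) \leq C(\varepsilon/\norm{f_i}_{\nu,B})^\alpha \nu(B)$, and taking the supremum over $i$ in the denominator yields the desired bound for $f$.

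The one step with real content is property (5), where the pointwise estimate $\detmin \leq \abs{f}/\abs{g} \leq \detmax$ on $U \cap \supp\nu$ yields two comparisons. From $\abs{g} \leq \abs{f}/\detmin$ one reads off $\detmin\norm{g}_{\nu,B} \leq \norm{f}_{\nu,B}$; from $\abs{g} \geq \abs{f}/\detmax$ one gets the $\nu$-a.e.\ inclusion $B^{g,\varepsilon}\cap\supp\nu \subseteq B^{f,\detmax\varepsilon}\cap\supp\nu$, and hence $\nu(B^{g,\varepsilon}) \leq \nu(B^{f,\detmax\varepsilon})$. Feeding both into the $(C,\alpha)$-good bound for $f$ at level $\detmax\varepsilon$ produces exactly the factor $C(\detmax/\detmin)^\alpha$ in the statement. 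This is the only step where one must track constants carefully, and it collapses to a one-line calculation once the two comparisons are in place; there is no genuine obstacle beyond remembering that the goodness condition sees only the behaviour on $\supp\nu$.
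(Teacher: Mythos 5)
Your proof of part (5) — the only part the paper proves rather than cites — is exactly the paper's argument: the inclusion $B^{g,\varepsilon}\cap\supp\nu \subseteq B^{f,\detmax\varepsilon}\cap\supp\nu$ from the upper comparison, the bound $\detmin\norm{g}_{\nu,B} \leq \norm{f}_{\nu,B}$ from the lower one, and then the goodness of $f$ at level $\detmax\varepsilon$. The sketches of (1)--(4) are also sound, so the proposal is correct and takes essentially the same approach.
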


The papers \cite{KleinM1998} and \cite{BerniKM2001} include various examples of
real valued functions which are $(C,\alpha)$-good with respect to Lebesgue
measure. Moreover, \cite{KleinT2007} extends those examples to functions with
values in non-Archimedean fields which satisfy a condition equivalent to
\eqref{eq:cnd:goodness}. For the purposes of the present paper we are
mainly interested in the following propositions.

\begin{proposition}[{\cite[Proposition~2.8]{AkaBRS2015}}]
  \label{prop:good_polynomials}
  Fix $d,m,k \in \Z_{>0}$ and let
  $\bm{g} = (g_1, \dotsc, g_N) \colon \R^d \to \R^N$
  be a polynomial map of degree at most $k$. Then for any convex subset
  $B \subset \R^d$ we have
  \[
    \Leb{\left\{x \in B : \norm{\bm{g}(\bm{x})} < \varepsilon \right\}}
    \leq 4 d \left( \frac{\varepsilon}{\norm{\bm{g}}_B} \right)^{\frac{1}{k}}
    \Leb{B},
  \]
  where $\norm{\bm{g}}_B = \sup_B \norm{\bm{g}(\bm{x})}$ and
  $\norm{\bm{g}(\bm{x})} = \max_j \abs{g_j(\bm{x})}$.
\end{proposition}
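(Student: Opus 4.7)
The plan is to reduce to a one-dimensional Remez-type estimate, and then lift to dimension $d$ via polar coordinates. First, by continuity of $\bm{g}$ on $\overline{B}$, pick an index $j$ and a point $x^\star \in \overline{B}$ such that $\abs{g_j(x^\star)} = \norm{g_j}_B = \norm{\bm{g}}_B$. Since
\[
  \{x \in B : \norm{\bm{g}(x)} < \varepsilon\}
  \subseteq \{x \in B : \abs{g_j(x)} < \varepsilon\},
\]
it suffices to establish the claimed inequality with $\bm{g}$ replaced by the single scalar polynomial $g_j$ of degree at most $k$.

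For the one-variable case I would invoke Remez' inequality: if $p$ is a polynomial of degree at most $k$ on an interval $I \subset \R$, then
\[
  \Leb{\{t \in I : \abs{p(t)} < \varepsilon\}}
  \leq 4 \left(\frac{\varepsilon}{\norm{p}_I}\right)^{1/k} \Leb{I}.
\]
This follows from the standard sharp Remez estimate $\norm{p}_I \leq T_k\left(2\Leb{I}/\Leb{E} - 1\right) \norm{p}_E$, applied with $E$ the $\varepsilon$-sublevel set of $\abs{p}$, combined with the classical Chebyshev bound $T_k(u) \leq (2u)^k$ valid for $u \geq 1$.

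To pass to $d \geq 2$ I would integrate in polar coordinates centred at $x^\star$. Since $B$ is convex it is star-shaped with respect to every point of $\overline{B}$, so writing $R(v)$ for the length of the chord of $B$ that starts at $x^\star$ in direction $v \in S^{d-1}$, one has the Cavalieri identity
\[
  \Leb{B} = \frac{1}{d} \int_{S^{d-1}} R(v)^d \, d\sigma(v).
\]
On each such chord, the restriction of $g_j$ is a polynomial of degree at most $k$ in one variable whose sup norm is at least $\abs{g_j(x^\star)} = \norm{\bm{g}}_B$, so the $1$D inequality yields
\[
  \int_0^{R(v)} \mathbf{1}_{\{\abs{g_j} < \varepsilon\}}(x^\star + rv) \, dr
  \leq 4 R(v) \left(\frac{\varepsilon}{\norm{\bm{g}}_B}\right)^{1/k}.
\]
Bounding $r^{d-1} \leq R(v)^{d-1}$ inside the radial integral and substituting into the polar decomposition of the target set gives
\[
  \Leb{\{x \in B : \abs{g_j(x)} < \varepsilon\}}
  \leq 4 \left(\frac{\varepsilon}{\norm{\bm{g}}_B}\right)^{1/k}
       \int_{S^{d-1}} R(v)^d \, d\sigma(v)
  = 4d \left(\frac{\varepsilon}{\norm{\bm{g}}_B}\right)^{1/k} \Leb{B},
\]
which is the desired estimate.

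The only mild subtlety is that $x^\star$ may lie on $\partial B$ rather than in its interior, in which case $R(v) = 0$ for many directions $v$; this merely improves the estimate and is accommodated by defining $\norm{\bm{g}}_B$ via $\overline{B}$. The substantive ingredient is therefore the sharp constant $4$ in the 1D Remez inequality, with the passage to higher dimension amounting to careful book-keeping in polar coordinates; I do not expect any serious obstacle beyond these routine verifications.
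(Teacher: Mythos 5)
Your proof is correct. Note that the paper offers no argument of its own here --- the proposition is quoted verbatim from Aka, Breuillard, Rosenzweig and de Saxc\'e with a citation --- and your reduction (pass to a single coordinate $g_j$ attaining $\norm{\bm{g}}_B$, apply the sharp one-dimensional Remez inequality together with $T_k(u) \leq (2u)^k$, then integrate radially from the maximising point using star-shapedness of the convex set) is precisely the standard route to this estimate; every step, including the bookkeeping $r^{d-1} \leq R(v)^{d-1}$ that produces the factor $d$ from $\Leb{B} = \frac{1}{d}\int_{S^{d-1}} R(v)^d\,d\sigma(v)$, checks out. The only implicit assumption is that $B$ is bounded so that the supremum is attained on $\overline{B}$; for unbounded convex $B$ the inequality is degenerate anyway, so this is harmless.
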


\begin{note}
  This immediately implies that polynomial functions on $\R^d$ of degree $k$ are
  $(4d,1/k)$-good with respect to Lebesgue measure.
\end{note}

Now, suppose that $U \subset \R^d$ is open and that
$\bm{g} = (g_1,\dotsc,g_N) \colon U \to \R^N$ is a $\cont[\ell]$ map. For a
given $\bm{x} \in U$, we say that $\bm{g}$ is \emph{$\ell$-nondegenerate at
$\bm{x}$} if the partial derivatives of $\bm{g}$ at $\bm{x}$ of order up to
$\ell$ span $\R^N$. In \cite{KleinM1998} Kleinbock and Margulis proved the
following result on the $(C,\alpha)$-goodness with respect to Lebesgue measure
of $\ell$-nondegenerate functions, which was later extended in \cite{KleinLW2004}
to a wider class of measures.

\begin{proposition}[{\cite[Proposition~3.4]{KleinM1998}}]
  \label{prop:good_linear_combinations_lebesgue}
  Let $\bm{g} = (g_1,\dotsc,g_N) \colon U \subseteq \R^d \to \R^N$ be a
  $\cont[\ell]$ map, $U$ open. If $\bm{g}$ is $\ell$-nondegenerate at
  $\bm{x} \in U$, then there are a neighbourhood $V \subset U$ of $\bm{x}$ and a
  $C > 0$ such that any linear combination of $1,g_1,\dotsc,g_N$ is
  $(C, 1/d\ell)$-good on $V$ with respect to Lebesgue measure.
\end{proposition}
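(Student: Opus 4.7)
The plan is to establish the result in three logical stages. First, by property~\ref{lm:good_function_properties:ratio} (and its siblings) of Lemma~\ref{lm:good_function_properties} plus a normalisation, it suffices to show that any linear combination $f = c_0 + c_1 g_1 + \dotsb + c_N g_N$ with $\max_i \abs{c_i} = 1$ is $(C, 1/d\ell)$-good on some fixed neighbourhood $V \ni \bm{x}$, with the same $C$. The $\ell$-nondegeneracy of $\bm{g}$ at $\bm{x}$ means that among the partial derivatives $\partial^\beta \bm{g}(\bm{x})$ with $\abs{\beta} \leq \ell$ one can pick $N$ vectors forming a basis of $\R^N$. By continuity of these derivatives (they exist and are continuous since $\bm{g}$ is $\cont[\ell]$), the corresponding determinant stays bounded away from zero on a sufficiently small ball $V$ about $\bm{x}$. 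Applying the inverse of this matrix of derivatives to the coefficient vector $(c_1,\dotsc,c_N)$ yields multi-indices $\beta_1,\dotsc,\beta_N$ (with $1 \leq \abs{\beta_i} \leq \ell$) and a constant $\lambda > 0$ depending only on $V$ and $\bm{g}$ such that
\[
  \max_{1 \leq i \leq N} \sup_{\bm{y} \in V} \abs{\partial^{\beta_i} f(\bm{y})}
  \geq \lambda \max_{1 \leq i \leq N} \abs{c_i}.
\]
Since $\bm{g}$ is $\cont[\ell]$, after possibly shrinking $V$ all derivatives of $f$ up to order $\ell$ are also bounded \emph{above} by some $M = M(V, \bm{g})$ times $\max_i \abs{c_i}$, so $\sup_V \abs{\partial^\beta f}$ is comparable to $\max_i \abs{c_i}$ for some $\abs{\beta} \leq \ell$.

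The second stage is the core analytic estimate: if $h \in \cont[\ell](V)$ satisfies $\sup_V \abs{\partial^\beta h} \geq A$ for some $\abs{\beta} \leq \ell$ while all derivatives of order $\leq \ell$ are uniformly bounded by $MA$, then $h$ is $(C', 1/d\ell)$-good on a slightly smaller neighbourhood, with $C'$ depending only on $d, \ell, M$ (not on $A$ or $h$). This is the classical Kleinbock--Margulis estimate, proved by induction on $\ell$. The base case $\ell = 1$ reduces, by slicing the ball along coordinate directions and applying Fubini, to the one-dimensional fact that a $\cont[1]$ function with $\abs{h'} \geq A$ on an interval $I$ satisfies $\Leb{\{t \in I : \abs{h(t)} < \varepsilon\}} \leq 2 \varepsilon / A$; for the inductive step one splits $V$ into a set where some lower-order derivative is already large (handled by induction applied to $\partial^\gamma h$ for suitable $\gamma$) and its complement, on which $\abs{h|}$ itself must be large by a Taylor-type argument. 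The exponent $1/d\ell$ is the product of $1/\ell$ (from the one-dimensional estimate) and $1/d$ (from dimension reduction through slicing).

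The third stage combines the two: given any linear combination $f$ with $\max_i \abs{c_i} = 1$, Stage~1 furnishes a multi-index $\beta$ with $\abs{\beta} \leq \ell$ such that $\sup_V \abs{\partial^\beta f} \geq \lambda$, while $\sup_V \abs{\partial^\gamma f} \leq M$ for every $\abs{\gamma} \leq \ell$. Stage~2 then provides a constant $C$, depending only on $d$, $\ell$ and $M/\lambda$, such that $f$ is $(C, 1/d\ell)$-good on $V$ (shrunk once more if needed). Since $M$ and $\lambda$ depend on $\bm{g}$ and $V$ but not on the $c_i$, the constant $C$ is uniform, as required. For a general $(c_0,\dotsc,c_N)$ we reduce to $\max_i \abs{c_i} = 1$ by scaling (using Lemma~\ref{lm:good_function_properties}); note the constant term $c_0$ disappears upon differentiating, which is why the condition involves only $c_1,\dotsc,c_N$, but once we control $f - c_0$ the additive shift by $c_0$ only changes the set $\{\abs{f} < \varepsilon\}$ to a translate of the superlevel/sublevel structure, still amenable to the same estimate.

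The main obstacle is the inductive lemma in Stage~2. The induction has to be set up carefully so that after replacing $h$ by a lower-order derivative $\partial^\gamma h$ one has a $\cont[\ell - \abs{\gamma}]$ function on a ball of radius comparable to the original and all the relevant sup norms scale in a controlled way; this is where the factor $1/d$ in the exponent appears, and where one must track how shrinking $V$ at each inductive step still leaves a definite neighbourhood of $\bm{x}$. Everything else (the linear-algebra argument in Stage~1 and the scaling reductions) is routine.
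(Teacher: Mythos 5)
The paper does not prove this proposition at all --- it is imported verbatim from \cite[Proposition~3.4]{KleinM1998} --- so the only benchmark is the original Kleinbock--Margulis argument, whose architecture your Stages~1 and~3 do reproduce. The linear-algebra reduction is essentially right: nondegeneracy gives multi-indices $\beta_1,\dotsc,\beta_N$ with $1 \leq \abs{\beta_i} \leq \ell$ such that the matrix $\paren{\partial^{\beta_i} g_j(\bm{x})}_{i,j}$ is invertible, hence $\max_i \abs{\partial^{\beta_i} f(\bm{x})} \geq \lambda$ for every normalised combination $f$, uniformly in the coefficients. One correction: what the sublevel-set lemma needs is a \emph{pointwise} lower bound $\inf_{\bm{y} \in V}\abs{\partial^{\beta_i} f(\bm{y})} \geq \lambda/2$ for the selected index $i$, not a bound on the supremum as you wrote. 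This does follow by continuity of the finitely many functions $\partial^{\beta_i} g_j$ after shrinking $V$ uniformly over unit coefficient vectors, but the distinction matters: when $\abs{\beta_i} = \ell$ you cannot upgrade a sup bound to a pointwise one using only $\cont[\ell]$ regularity, and the pointwise bound is also what gives $\norm{f}_B \gg \lambda\, r(B)^{\abs{\beta_i}}$ on every sub-ball $B \subset V$, which you need because $(C,\alpha)$-goodness quantifies over all such $B$ with $\norm{f}_B$ in the denominator --- a point your sketch never addresses.

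The genuine gap is Stage~2, which carries the entire analytic content of the proposition, and the inductive scheme you describe for it would not assemble into a proof. The correct base case is not $\ell = 1$ but the one-dimensional statement for an \emph{arbitrary} derivative order $k$: if $\abs{h^{(k)}} \geq A$ throughout an interval $I$, then $\abs{\{t \in I : \abs{h(t)} < \varepsilon\}} \leq C_k (\varepsilon/A)^{1/k} \abs{I}$. That statement is itself proved by induction on $k$, and the step runs in the opposite direction to yours: one removes the set $\{\abs{h^{(k-1)}} < \delta\}$, which is a single interval of length at most $2\delta/A$ because $h^{(k)}$ has constant sign and so $h^{(k-1)}$ is strictly monotone, and on the remainder one applies the inductive hypothesis \emph{to $h$ itself}, now equipped with a lower bound on a derivative of order $k-1$, before optimising over $\delta$ to produce the exponent $1/k$. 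In your version the inductive hypothesis is applied to $\partial^{\gamma} h$, which only controls sublevel sets of $\partial^{\gamma} h$ rather than of $h$; and on the complementary region, where the lower-order derivatives are \emph{small}, the claim that ``$\abs{h}$ itself must be large by a Taylor-type argument'' is false --- a Taylor expansion with all derivatives small yields no lower bound on $\abs{h}$ at all. The multidimensional case is then a separate induction on the dimension $d$ via Fubini, with the slicing direction chosen according to the multi-index $\beta$, and the degradation from $1/k$ on each slice to $1/dk$ on the ball comes from an optimisation over the measure of the set of bad slices; it is not a formal ``product of $1/\ell$ and $1/d$''. Since everything outside this lemma is, as you say, routine, the proposal as written assumes precisely the part of the proposition that requires proof.
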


Recall from Definition~\ref{def:measure_properties} that a measure $\nu$ on $X$
is called \emph{Federer} if for $\nu$ almost every $x \in X$ there are a
neighbourhood $U$ of $x$ and a constant $D > 0$ such that
$\nu(3^{-1}B) > \nu(B)/D$ for any ball $B \subset U$ centred on $\supp\nu$.
Furthermore, observe that $\nu$ is absolutely $(C,\alpha)$-decaying according to
\eqref{eq:decaying_measure} precisely when every linear function is absolutely
$(C,\alpha)$-good with respect to $\nu$.

\begin{proposition}[{\cite[Proposition~7.3]{KleinLW2004}}]
  \label{prop:good_linear_combinations_general}
  Let $\bm{g} = (g_1,\dotsc,g_N) \colon U \subseteq \R^d \to \R^N$ be a
  $\cont[\ell+1]$ map, $U$ open. Further, let $\nu$ be a measure which is
  Federer and absolutely $(\tilde{C},\alpha)$-decaying on $U$ for some
  $\tilde{C},\alpha > 0$. If $\bm{g}$ is $\ell$-nondegenerate at
  $\bm{x} \in U$, then there are a neighbourhood $V \subset U$ of $\bm{x}$ and a
  $C > 0$ such that any linear combination of $1,g_1,\dotsc,g_N$ is absolutely
  $\left(C, \alpha/(2^{\ell+1}-2)\right)$-good on $V$ with respect to $\nu$.
\end{proposition}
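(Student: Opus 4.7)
My plan is to argue by induction on $\ell$, following the Kleinbock--Margulis template from Proposition~\ref{prop:good_linear_combinations_lebesgue} but systematically substituting absolute $(\tilde{C},\alpha)$-decay of $\nu$ for arguments specific to Lebesgue measure. As a preliminary reduction, since $(C,\alpha)$-goodness is invariant under scaling of the coefficients (Lemma~\ref{lm:good_function_properties}), it suffices to prove a uniform estimate on a small neighbourhood $V$ of $\bm{x}$ for all $f = c_0 + \sum_{i=1}^N c_i g_i$ whose coefficient vector lies on the unit sphere. Continuity of the partial derivatives $\partial^s g_i$ together with a compactness argument on this sphere ensures that the constants obtained pointwise at $\bm{x}$ extend uniformly over a sufficiently small $V$.

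The core estimate is a Taylor-expansion dichotomy on each ball $B = B(\bm{y},r) \subset V$ centred on $\supp\nu$. For $\bm{y} \in V$, write $f(\bm{z}) = P_{\bm{y}}(\bm{z}-\bm{y}) + R(\bm{z})$ where $P_{\bm{y}}$ is the degree-$\ell$ Taylor polynomial of $f$ at $\bm{y}$ and $|R| \leq C_1 r^{\ell+1}$ on $B$ (using $f \in \cont[\ell+1]$). By $\ell$-nondegeneracy of $\bm{g}$ and the compactness reduction, for every normalised coefficient vector at least one coefficient of $P_{\bm{y}}$ is bounded below by a uniform $\delta_0 > 0$. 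The inductive step is then driven by the observation that $\{\partial_{x_j} g_i\}$ is $(\ell-1)$-nondegenerate at $\bm{x}$ as a map into $\R^{dN}$: on each ball $B$, either $\|f\|_B \leq \varepsilon$ (so the bound is trivial), or a mean-value / Chebyshev-type argument combined with the Federer property produces a concentric sub-ball $B'$ and a coordinate direction $j$ such that $B^{f,\varepsilon} \cap B' \subseteq B'^{\partial_j f,\varepsilon'}$, with rescaled parameter $\varepsilon' \asymp \sqrt{\varepsilon \|f\|_B}/r$. Since $\partial_j f = \sum c_i \partial_j g_i$ falls under the inductive hypothesis, optimising over an intermediate threshold and combining the two estimates via the recurrence $1/\alpha_\ell = 2/\alpha + 2/\alpha_{\ell-1}$ (with base case $\alpha_1 = \alpha/2$) yields exactly the claimed exponent $\alpha/(2^{\ell+1}-2)$.

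The principal obstacle I anticipate is the bookkeeping of constants through the induction: in particular, ensuring that $C$ does not blow up at each step and that the Federer property is invoked only a bounded number of times in the passage between $B$ and $B'$. A secondary subtlety is that the inductive hypothesis has to be applied to $\partial_j f$, which is a linear combination of $\partial_j g_1, \dotsc, \partial_j g_N$ with no constant term; this is strictly easier than the full statement but should be incorporated into the inductive statement rather than handled separately. The base case $\ell = 1$ is cleaner: the Taylor expansion reduces it to bounding the $\nu$-mass of an $O(\varepsilon)$-neighbourhood of the affine hyperplane $\{P_{\bm{y}} = 0\}$, which is exactly what absolute $(\tilde{C},\alpha)$-decay controls, producing the target exponent $\alpha/2$.
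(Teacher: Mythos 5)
The paper does not actually prove this proposition --- it is quoted verbatim from \cite[Proposition~7.3]{KleinLW2004} --- so there is no internal argument to compare against. Your inductive scheme is essentially the one used there (and in its Lebesgue antecedent, Proposition~\ref{prop:good_linear_combinations_lebesgue}): normalise the coefficient vector to the unit sphere, use compactness and $\ell$-nondegeneracy to get a uniform lower bound on some partial derivative of order at most $\ell$, and run a dichotomy between ``a first-order derivative is large'' and ``recurse on $\partial_j f$'', with the recurrence $1/\alpha_\ell = 2/\alpha + 2/\alpha_{\ell-1}$, $\alpha_1 = \alpha/2$, which does reproduce $\alpha/(2^{\ell+1}-2)$. (One small imprecision: the family $\{\partial_{x_j} g_i\}$ need not be $(\ell-1)$-nondegenerate \emph{as a map into} $\R^{dN}$; what you actually need, and what follows from the hypothesis, is that for each unit coefficient vector some derivative of $f$ of order between $1$ and $\ell$ is bounded below near $\bm{x}$.)

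The one step that does not work as written is the base case. For $\ell=1$ on a ball $B = B(\bm{y},r)$ the Taylor remainder is only $O(r^2)$, so $B^{f,\varepsilon}$ is contained in an $O(\varepsilon + r^2)$-neighbourhood of the affine hyperplane $\{P_{\bm{y}} = 0\}$, not an $O(\varepsilon)$-neighbourhood; applying absolute decay directly on $B$ then gives the useless bound $\tilde{C}\bigl((\varepsilon+r^2)/r\bigr)^\alpha$ whenever $\varepsilon \ll r^2$. The missing (and essential) step is to cover $B^{f,\varepsilon}$ by balls of radius comparable to $r\sqrt{\varepsilon/\norm{f}_B}$, on each of which the zero locus is hyperplane-like to within the width of the sublevel set, apply $(\tilde{C},\alpha)$-decay on each such ball, and reassemble the estimate using the Besicovitch property of $\R^d$ together with the Federer property of $\nu$ to compare the sum of the $\nu$-masses of the small balls with $\nu(B)$. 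This covering is precisely where the exponent degrades from $\alpha$ to $\alpha/2$ and is the only place the Federer hypothesis enters the base case; as written, your argument would appear to give exponent $\alpha$ there, which is inconsistent with the recurrence you then feed into the induction. The same scale-adapted covering is needed to justify the inclusion $B^{f,\varepsilon}\cap B' \subseteq B'^{\partial_j f,\varepsilon'}$ with $\varepsilon' \asymp \sqrt{\varepsilon\norm{f}_B}/r$ in the inductive step, so it is worth isolating it as a standalone lemma before starting the induction.
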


\begin{note}
  Consider the Lebesgue measure as $\nu$. Then
  Proposition~\ref{prop:good_polynomials} shows that for $d=1$ the exponent $1/\ell$
  in Proposition~\ref{prop:good_linear_combinations_lebesgue} is likely to be
  optimal, while $1/(2^{\ell+1}-2)$ is much worse. However, the latter is
  independent of $d$. Unfortunately, according to \cite{KleinLW2004}, finding
  the optimal exponent seems to be a challenging open problem.
\end{note}

\begin{corollary}
  \label{cor:analytic_nondegenerate}
  Let $k$ be a positive integer, $\bm{f} = (f_0,\dotsc,f_{\tau-1})$ be an
  analytic map on $U \subset \R^d$, and let $\nu$ be a measure on $U$ which is
  Federer and absolutely $(\tilde{C},\tilde{\alpha})$-decaying on $U$. Then for
  every $\bm{x} \in U \setminus Z_{\bm{f}}$ there are a neighbourhood
  $V \ni \bm{x}$ and constants $C_{\bm{x}},\alpha > 0$ such that $s(\bm{f})$ is
  $(C_{\bm{x}},\alpha)$-good on $V$ for every symmetric polynomial $s$ of degree
  up to $k$, where $Z_{\bm{f}}$ is the zero set of a real anlytic function.
  Furthermore, if $N = \binom{k+\tau-1}{\tau}$, then $\alpha$ can be chosen to
  be:
  \begin{itemize}
    \item $\tilde{\alpha}/(2^N - 2)$;
    \item $1/d(N-1)$ if $\nu$ is the Lebesgue measure;
    \item $1/k\deg(\bm{f})$ if $\bm{f}$ is a polynomial map and $\nu$ is the
      Lebesgue measure.
  \end{itemize}
  \begin{proof}
    Let $\Sigma$ be a basis for the linear span
    $\linspan[\R]{\SchurD{\tau}{k}{\bm{f}}}$, and note that we may always assume
    that $1 \in \Sigma$, since $1 \in \SchurD{\tau}{k}{\bm{f}}$ for every
    $k > 0$. Furthermore, $\# \Sigma \leq N$ and all the elements of $\Sigma$
    are analytic because so is $\bm{f}$.

    Therefore by the Wronskian Criterion (Corollary~\ref{cor:wronskian_criterion})
    we know that at least one of the generalised Wronskians of $\Sigma$, say
    $W$, is not identically zero. Hence $\Sigma$ is non-degenerate outside of
    the zero set $Z_{\bm{f}}$ of $W$, and the statement follows from
    Propositions \ref{prop:good_polynomials},
    \ref{prop:good_linear_combinations_lebesgue} and
    \ref{prop:good_linear_combinations_general}.
  \end{proof}
\end{corollary}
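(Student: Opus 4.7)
The strategy is to reduce the problem to controlling linear combinations of a fixed finite set of analytic functions, then invoke the three ``good function'' propositions already quoted. The entry point is Proposition~\ref{prop:schur_linear_basis}: every $s \in \sympolybounded{\tau}{k}$ is a $\Q$-linear combination of the $N = \binom{k+\tau-1}{\tau}$ Schur polynomials in $\SchurD{\tau}{k}{\bm{T}}$, so $s(\bm{f})$ lies in the $\R$-linear span of $\SchurD{\tau}{k}{\bm{f}}$. I would pick a maximal $\R$-linearly independent subset $\Sigma$ of $\SchurD{\tau}{k}{\bm{f}}$; since $s_{(0,\dots,0)} = 1$, I may arrange $1 \in \Sigma$, and $\#\Sigma \leq N$. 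The elements of $\Sigma$ are analytic because $\bm{f}$ is.

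For the polynomial case (last bullet), the Wronskian machinery is unnecessary: $s(\bm{f})$ is itself a polynomial $\R^d \to \R$ of degree at most $k \deg(\bm{f})$, so Proposition~\ref{prop:good_polynomials} directly yields $(4d, 1/(k\deg\bm{f}))$-goodness with respect to Lebesgue measure on any convex $V \subset U$.

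For the remaining two bullets, I would argue as follows. Since the elements of $\Sigma$ are analytic and $\R$-linearly independent by construction, the Wronskian Criterion (Corollary~\ref{cor:wronskian_criterion}) guarantees that at least one generalised Wronskian $W$ of $\Sigma$ is not identically zero on $U$. Let $Z_{\bm{f}} = \{W = 0\}$, which is the zero set of a real analytic function. For each $\bm{x} \in U \setminus Z_{\bm{f}}$, the non-vanishing of $W$ at $\bm{x}$ yields an $\ell$-nondegeneracy statement (with $\ell \leq N-1$) for $\Sigma$ viewed as a vector-valued map, so on a small enough neighbourhood $V \ni \bm{x}$ the hypotheses of Propositions~\ref{prop:good_linear_combinations_lebesgue} and \ref{prop:good_linear_combinations_general} are met. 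These propositions supply the exponents $1/(d(N-1))$ under Lebesgue measure and $\tilde{\alpha}/(2^{N}-2)$ under a Federer, absolutely $(\tilde{C},\tilde\alpha)$-decaying measure, applied to \emph{every} linear combination of $1$ and the elements of $\Sigma$. Since any $s(\bm{f})$ with $\deg s \leq k$ is such a combination, the goodness transfers to $s(\bm{f})$ with a single constant $C_{\bm{x}}$ and a single exponent $\alpha$, uniformly in $s$.

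The only real obstacle is bookkeeping: one must confirm that the linear combinations controlled by the good-function propositions are precisely those of the form $s(\bm{f})$, which is immediate from Proposition~\ref{prop:schur_linear_basis} once $\Sigma$ has been chosen to be a basis, and that the nondegeneracy order extracted from the non-vanishing Wronskian matches the exponent in the conclusion. Apart from that, the corollary is a direct synthesis of the Schur basis theorem, the Wronskian Criterion, and the three propositions.
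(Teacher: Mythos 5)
Your proposal is correct and follows essentially the same route as the paper: choose a basis $\Sigma$ (containing $1$) for the span of $\SchurD{\tau}{k}{\bm{f}}$, invoke the Wronskian Criterion to get nondegeneracy off the zero set of a generalised Wronskian, and then apply Propositions~\ref{prop:good_polynomials}, \ref{prop:good_linear_combinations_lebesgue} and \ref{prop:good_linear_combinations_general}. Your added remarks — that the polynomial case bypasses the Wronskian machinery entirely, and that the nondegeneracy order $\ell \leq N-1$ is what produces the stated exponents — are accurate and merely make explicit what the paper leaves implicit.
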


\begin{remark}
  \label{rmk:zero_set_of_an_analytic_function}
  As a consequence of the Implicit Function Theorem, one can show that the
  Hausdorff dimension of $Z_{\bm{f}}$ is at most $d - 1$ \cite{Mitya2020}. In
  particular, if $\nu$ is either the Lebesgue measure on $\R^d$ or the natural
  measure supported on a sufficiently regular IFS of dimension $s > d-1$ (see
  Example~\ref{ex:federer_and_decaying_measures}), then $\nu$ satisfies the
  hypotheses of Corollary~\ref{cor:analytic_nondegenerate} and
  $\nu(Z_{\bm{f}}) = 0$.
\end{remark}

\section{Points with conjugate coordinates}
\label{sec:conjugate_coordinates}

\begin{proof}[Proof of Theorem~\ref{thm:lower_bound_for_manifold_count}
              assuming Corollary~\ref{cor:n+1_points}]
  $ $\newline
  First, we are now going to check that the hypotheses of
  Corollary~\ref{cor:n+1_points} are satisfied for $\psi$ and $\Psi$ defined
  by
  \[
    \begin{cases}
      \psi_k^d(Q) = \psi^d(Q)
        \coloneqq Q^{d - n - 1 + \gamma (m + 1 - d)}
        & \text{for } 0 \leq k < d \\
      \psi_k(Q) = \Psi(Q) \defeq Q^{1-\gamma}
        & \text{for } d \leq k \leq m.
    \end{cases}
  \]
  Clearly $\gamma > 0$ implies that $\psi_k(Q)Q^{-1} = Q^{-\gamma}$ is
  decreasing, and observe that
  \[
    \bm{\psi}(Q)Q^{n-m} = \psi(Q)^d \Psi(Q)^{m+1-d} Q^{n-m}
                        = 1,
  \]
  hence
  condition~\eqref{eq:cnd:t_positive_d} of
  Theorem~\ref{thm:bound_over_an_interval} is satisfied for every
  $1 \leq \tau \leq m+1$ by choosing $I_{\tau} = (0,\dotsc,\tau-1)$, since our
  choice of $\psi$ and $\Psi$, together with the fact that $\bm{\psi}$ is
  decreasing, implies that $\bm{\psi}_{I_\tau}$ is decreasing as well.

  Furthermore, observe that $\symord{\bm{x}} = \infty$ because the coordinate
  functions $x_0,\dotsc,x_{d-1}$ are algebraically independent over $\R$.
  Therefore \eqref{eq:cnd:count_bound} is enough to guarantee that
  condition~\eqref{eq:cnd:minors_d} is satisfied as well, since the
  ordering of $\psi_k$, hence of $f_k$, is irrelevant for $d \leq k \leq m$.

  Thus we can apply Corollary~\ref{cor:n+1_points} taking the Lebesgue measure
  $\vol_d$ on $\R^d$ as $\nu$, and through it we find
  $\countdomgood{\theta} \subseteq \domf$ with
  $\vol_d(\countdomgood{\theta}) \gg \vol_d(\domf)$.
  Moreover, for every $x \in \countdomgood{\theta}$ we have points
  $(\alpha_0, \dotsc, \alpha_m)$ with algebraic conjugate coordinates and
  $\polyheight{\alpha_k} \ll Q$ such that
  \begin{equation}
    \label{eq:proof_n+1_points_ball}
    \begin{cases}
      \abs{x_k - \alpha_k} \ll \frac{\psi(Q)}{Q}
        & \text{for } 0 \leq k < d \\
      \abs{f_k(\bm{x}) - \alpha_k} \ll \frac{\Psi(Q)}{Q}
        & \text{for } d \leq k \leq m.
    \end{cases}
  \end{equation}
  However, given that $\psi$ and $\Psi$ are multiplicative, we may assume that
  $\polyheight{\alpha_k} \leq Q$ by rescaling $Q$ and changing the implied
  constants in \eqref{eq:proof_n+1_points_ball} accordingly.

  Now choose a compact subset $K \subseteq \countdomgood{\theta}$ such that
  $\vol_d(K) \gg \vol_d(\domf)$, which we can always do since $\domf$ is
  assumed to be bounded. Then note that the partial derivatives of $\bm{f}$ are
  all bounded on $K$, therefore the Mean Value Theorem implies that for each
  $d \leq k \leq m$ we have
  \[
    \abs{f_k(\bm{\alpha}) - f_k(\bm{x})}
    \ll_{K,d,f_k} \max_{0 \leq i < d} \abs{x_i - \alpha_i}
    \ll \frac{\psi(Q)}{Q}.
  \]
  It follows that
  \[
    \abs{f_k(\bm{\alpha}) - \alpha_k}
    \leq \abs{f_k(\bm{\alpha}) - f_k(\bm{x})} + \abs{f_k(\bm{x}) - \alpha_k}
    \ll \frac{\psi(Q)}{Q} + \frac{\Psi(Q)}{Q}.
  \]
  Since $\Psi(Q) \geq \psi(Q)$ precisely when
  $\gamma \leq \frac{n + 1}{m + 1}$, we have that
  \begin{equation}
    \label{eq:proof_n+1_bound}
    \abs{f_k(\bm{\alpha}) - \alpha_k} \ll \frac{\Psi(Q)}{Q} = Q^{-\gamma},
  \end{equation}
  hence if $\capprcount$ is greater than the implied constant, then
  $(\alpha_0,\dotsc,\alpha_m) \in M_{\bm{f}}^n(Q, \gamma, \domf)$.
  Therefore we conclude that
  \begin{align*}
    \#M_{\bm{f}}^n(Q, \gamma, \domf)
    &\gg \vol_d(K) \left(\frac{Q}{\psi(Q)}\right)^d \\
    &\gg \vol_d(\domf) Q^{n + 1 - \gamma (m + 1 - d)}
    \qedhere.
  \end{align*}
  \end{proof}

In section~\ref{sec:tailored_polynomials} we shall prove the following Corollary
of Theorem~\ref{thm:bound_over_an_interval}, from which
Corollary~\ref{cor:n+1_points} follows immediately.

\begin{corollary}
  \label{cor:tailored_polynomials}
  Consider $\psi_0,\dotsc,\psi_{m}$, $\varphi_{m+2}, \dotsc, \varphi_n$,
  $\bm{f}$ as in Theorem~\ref{thm:bound_over_an_interval} such that
  condition~\eqref{eq:cnd:t_positive_d} holds with
  \[
    \varphi(Q) =  \varphi_{m+1}(Q)
    = \max\left\{ Q, \varphi_{m+2}(Q), \dotsc, \varphi_n(Q) \right\}.
  \]
  Furthermore, assume that there are constants $\deltamin, \deltamax > 0$ such
  that
  \begin{equation}
    \label{eq:cnd:volume_constant}
    \deltamin \leq \bm{\psi}(Q) \bm{\varphi}(Q) \leq \deltamax.
  \end{equation}
  Then for every $0 < \theta < 1$ there are constants
  $\ctailoredy,\ctailoredf > 0$ and a subset $\countdomgood{\theta}$ of $\domf$,
  independent of $Q$, such that $\nu(\countdomgood{\theta}) > \theta \nu(\domf)$
  and every $\bm{x} \in \countdomgood{\theta}$ admits $n+1$ linearly independent
  irreducible polynbomials $P = a_0 + a_1 X + \dotsb + a_n X^n \in \Z[X]$ of
  degree bounded by $n$ such that
  \begin{equation}
    \label{eq:tailored_polynomial_system}
      \begin{cases}
        \begin{array}{@{}c@{}}
          \abs{P\left(f_k(\bm{x})\right)} < \ctailoredy \psi_k(Q) \\
          \abs{P'\left(f_k(\bm{x})\right)} > \ctailoredf \varphi(Q)
        \end{array}
          & \text{for } 0 \leq k \leq m \\
        \abs{a_k} \leq \ctailoredy \varphi_k(Q)
          & \text{for } m < k \leq n
      \end{cases}
  \end{equation}
  whenever $Q$ is sufficiently large. In particular
  $\polyheight{P} \ll \varphi(Q)$.
\end{corollary}

\begin{proof}[Proof of Corollary~\ref{cor:n+1_points}]
  Let $P$ be as in the statement of Corollary~\ref{cor:tailored_polynomials},
  let $\varphi_{m+1}(Q) = \dotsb = \varphi_n(Q) = Q$, so that $\varphi(Q) = Q$
  as well, and note that by remark~\ref{rmk:compactness_of_domfgood} we may
  choose $\domfgood{\theta}$ to be compact.
  To simplify the notation, let $y_k = f_k(\bm{x})$ for $0 \leq k \leq m$. Then
  observe that, since $P'$ is continuous, $\domfgood{\theta}$ is compact, and
  $\psi_k(Q)Q^{-1}$ is decreasing, we may choose an open set $U$ with
  $\domfgood{\theta} \subset U \subseteq \domf$ and a constant $Q_0 > 0$ such
  that for every $Q > Q_0$ every interval of the form
  \[
    I_{y_k} \defeq \left[y_k - \kappa \frac{\psi_k(Q)}{Q},
                         y_k + \kappa \frac{\psi_k(Q)}{Q}\right]
  \]
  is contained in $U$, where $\kappa \coloneqq \frac{\ctailoredy}{\ctailoredf}$,
  and such that $\abs{P'(z)} > \ctailoredf Q$ for every $z \in U$.
  Furthermore, by the Mean Value Theorem we know that for every
  $\tilde{y}_k \in I_{y_k}$ there is a $z_k \in I_{y_k}$ such that
  \begin{align*}
    P(\tilde{y}_k)
    &= P(y_k) + P'(z_k) (\tilde{y}_k - y_k).
  \end{align*}
  Now note that $\polyheight{P} \ll Q$, again because
  $\psi_k(Q)Q^{-1}$ is decreasing for every $0 \leq k \leq m$. As
  $\domf$ is bounded, it follows that $\abs{P'(z_k)}$ is bouded above by
  $Q$, up to a constant that depends on $n$, $\bm{f}$ and $\domf$.
  Furthermore, $\abs{P'(z_k)} > \ctailoredf Q$ implies that for
  $\tilde{y}_k = y_k \pm \kappa \frac{\psi_k(Q)}{Q}$ we have
  \[
    \abs{P'(z_k) (\tilde{y}_k - y_k)}
    > \ctailoredf \kappa \psi_k(Q)
    = \ctailoredy \psi_k(Q),
  \]
  therefore
  \[
    P\left(y_k - \kappa \frac{\psi_k(Q)}{Q}\right)
    P\left(y_k + \kappa \frac{\psi_k(Q)}{Q}\right) < 0.
  \]

  Applying once more the Mean Value Theorem we obtain, for every
  $0 \leq k \leq m$, a root $\alpha_k$ of $P$ such that
  \[
    \abs{y_k - \alpha_k} < \kappa \frac{\psi_k(Q)}{Q}.
  \]
  Finally, note that Corollary~\ref{cor:tailored_polynomials} gives us $n+1$
  distinct irreducible polynomials, from which we obtain $n+1$ distinct points
  $(\alpha_0, \dotsc, \alpha_m)$.
\end{proof}

\begin{note}
  The numbers $y_k$ are pairwise distinct on $\domf \cap \supp\nu$, since
  $\det U^h_{\bm{f}}$ is non-zero by remark~\ref{rmk:non_zero_determinant}.
  By taking $Q$ large enough if necessary, it follows that we can guarantee
  that the sets $I_{y_k}$ are pairwise disjoint, hence the roots $\alpha_k$ are
  pairwise distinct. In particular, observe that the constant $\kappa$ does not
  depend on $Q$ and may be chosen uniformly on $\domf$.
\end{note}

\section{Tailored polynomials}
\label{sec:tailored_polynomials}


Similarly to what Beresnevich, Bernik and G\"otze did in \cite{BeresBG2010}, we
call a \emph{tailored polynomial} an irreducible polynomial which satisfies
\eqref{eq:d_dimensional_inequalities}. Our construction follows closely the
argument of \cite[Section~3]{BeresBG2010}, and it is based on
Theorem~\ref{thm:bound_over_an_interval}, which we will then prove in
Section~\ref{sec:proof_of_the_main_theorem} using the quantitative non-divergence
method of Kleinbock and Margulis.

Now fix $\bm{x} \in \domf$ and observe that solving for $P \in \Z[X]$ the system
of inequalities
\begin{equation}
  \label{eq:starting_minkowski}
  \begin{cases}
    \abs{P(f_k(\bm{x}))} < \psi_k(Q)
      & \text{for } 0 \leq k \leq m \\
    \abs{a_k} \leq \varphi_k(Q)
      & \text{for } m < k \leq n
  \end{cases}
\end{equation}
is equivalent to looking for points of the lattice $L \defeq M\Z^{n+1}$ which
lie in the convex body $\convexbody$, where $M = M_{\bm{f}}(\bm{x})$ is the matrix
defined in \eqref{eq:main_matrix} and where
\[
  \convexbody \defeq
  \left\{ \bm{y} \in \R^{n+1} :
    \begin{array}{lr}
      \abs{y_k} < \psi_k(Q) & \text{ for } 0 \leq k \leq m \\
      \abs{y_k} \leq \varphi_k(Q) & \text{ for } m < k \leq n
    \end{array} \right\}.
\]
Note that $\det M \neq 0$ on $\domf \cap \supp \nu$, since
$\Vand(\bm{f}) \neq 0$ implies $\det U^h_{\bm{f}} \neq 0$ by
remark~\ref{rmk:non_zero_determinant}. Furthermore, since $\det M$ is
continuous in $\bm{x}$ we may assume without loss of generality that is bounded
away from $0$ on $\domf$, up to replacing $\domf$ with the interior of a compact
subset with measure arbitrarily close to $\nu(\domf)$ (which we can always find
since $\nu$ is Radon).
Then Minkowski's second convex body theorem tells us that the successive minima
$\lambda_0 \leq \dotsc \leq \lambda_n$ of $\convexbody$ with respect to $L$
satisfy
\[
  \frac{2^{n+1}}{(n+1)!} \det M
  \leq \lambda_0 \dotsm \lambda_n \vol(\convexbody)
  \leq 2^{n+1} \det M
\]
where $\vol(\convexbody) = 2^{n+1}\bm{\psi}(Q)\bm{\varphi}(Q)$ is the volume of
$\convexbody$. Therefore we have
\[
  \lambda_n \leq \frac{\det(M)}{\deltamin \lambda_0^n},
\]
since $\bm{\psi}(Q)\bm{\varphi}(Q) \geq \deltamin$ by
condition~\eqref{eq:cnd:volume_constant}.

Now note that if $P = a_0 + a_1 X + \dotsb + a_n X^n$ is such that
$M\bm{a} \in \lambda_0 \convexbody$ where
$\bm{a} = (a_0,\dotsc,a_n)^T \neq (0,\dotsc,0)^T$, then
$\polyheight{P} \ll \lambda_0 \varphi(Q)$ as long as $\det(M)$ is uniformly
bounded away from $0$. Indeed, there is a $\bm{b} \in \lambda_0\convexbody$ such
that $M\bm{a} = \bm{b}$, thus for $Q$ large enough
\[
  \polyheight{P} = \norm{\bm{a}}_\infty
  \leq \norm{M^{-1}}_\infty \norm{\bm{b}}_\infty
  \leq \lambda_0 \varphi(Q) \frac{\norm{\adj(M)}_\infty}{\abs{\det(M)}}
\]
where $\adj(M)$ is the adjugate matrix of $M$, whose norm depends only on $n$,
$\bm{x}$ and $\bm{f}(\bm{x})$, and thus can be bounded above by a constant
depending on $n$, $\bm{f}$, and $\domf$. Since $\domf$ is bounded, it follows
that there is a constant $\cdermax > 0$ such that
\[
  \max_{0 \leq i \leq m} \abs{P'(f_i(\bm{x}))}
  \leq \cdermax \lambda_0 \varphi(Q).
\]
Therefore Theorem~\ref{thm:bound_over_an_interval} implies that
for any given $\delta_0 > 0$ the set of $\bm{x} \in \domf$ for which
$\lambda_0 = \lambda_0(\bm{x}) \leq \delta_0$ is bounded above by
\[
  \delta_0^\alpha \nu(\domf)
\]
up to a constant, since condition~\eqref{eq:cnd:volume_constant}
implies that
$\vol(\lambda_0 \convexbody) \leq 2^{n+1} \deltamax \lambda_0^{n+1}$.
In particular, we may choose $\delta_0$ depending only on $\theta$, $n$,
$\bm{f}$ and $\domf$ such that for every $\bm{x}$ in a subset $\domtp(\delta_0)$
of measure at least $\sqrt{\theta}\nu(\domf)$ we have $\lambda_0 > \delta_0$.

Now, let $\delta_n \defeq \frac{\det(M)}{\deltamin \delta_0^n}$. Then for any
$\bm{x} \in B(\delta_0)$ we may find $n+1$ linearly independent polynomials
$P_i$ whose vectors of coefficients $\bm{a}_i$ satisfy
$M\bm{a}_i \in \delta_n\convexbody$. If $A$ is the matrix with columns
$\bm{a}_i$, $0 \leq i \leq n$, then
\[
  1 \leq \abs{\det(A)} \leq \vol(\delta_n \convexbody)
                       \leq 2^{n+1} \deltamax \delta_n^{n+1} \defeq c'
\]
and by Bertrand's postulate we may find a prime $p$ such that
\[
  c' < p < 2 c'.
\]
In particular, this implies that $\det(A) \neq 0 \pmod{p}$, hence the system
\[
  A \bm{t} \equiv \bm{b}
\]
has a unique solution $\bm{t} \in \Fp^{n+1}$, where
$\bm{b} = (0,\dotsc,0,1)^T$. Now, for $\ell = 0,\dotsc,n$ define
$\bm{r}_\ell = (1,\dotsc,1,0,\dotsc,0)^T \in \Fp^{n+1}$, where $\ell$ denotes
the number of zeroes. Then write $A\bm{t} - \bm{b} = p\bm{w}$ after choosing
representatives for $\bm{t}$ in $\{0,\dotsc,p-1\}$,
let $\bm{\gamma}_\ell \in \Fp^{n+1}$ be the unique solution to
\[
  A\bm{\gamma}_\ell \equiv - \bm{w} + \bm{r}_\ell
\]
modulo $p$, and define $\bm{\eta}_\ell = \bm{t} + p \bm{\gamma}_\ell$. For each
$\ell = 0,\dotsc,n$ let
\[
  \widetilde{P}_\ell \defeq \sum_{i = 0}^n \eta_{\ell i} P_i
\]
and note that the linear independence of the vectors $\bm{r}_\ell$ implies the
linear independence of the polynomials $\widetilde{P}_\ell$.

Since $A\bm{\eta}_\ell = \bm{s}$ is the vector of coefficients of
$\widetilde{P}_\ell$ and since $\bm{\eta}_\ell \equiv \bm{t} \pmod{p}$, it
follows that $s_n \equiv 1 \pmod{p}$ and $s_i \equiv 0 \pmod{p}$ for
$0 \leq i \leq n-1$. Furthermore, the definition of $\bm{\gamma}_\ell$ implies
that
\[
  A\bm{\eta}_\ell = \bm{b} + p\bm{r}_\ell,
\]
thus $s_0 \equiv p \pmod{p^2}$. Therefore, by Eisenstein's criterion it follows
that $\widetilde{P}_\ell$ is irreducible. Finally, observe that taking
representatives for $\bm{t}$ and $\bm{\gamma}_\ell$ in $\{0,\dotsc,p-1\}$ we
have $\abs{\eta}_{\ell i} \leq p^2$, thus $\widetilde{P}_\ell$ satisfies
\begin{equation}
  \label{eq:irreducible_minkowski}
  \begin{cases}
    \abs{P(f_k(\bm{x}))} < \ctailoredy \psi_k(Q)
      & \text{for } 0 \leq k \leq m \\
    \abs{a_k} \leq \ctailoredy \varphi_k(Q)
      & \text{for } m < k \leq n,
  \end{cases}
\end{equation}
where
\begin{equation}
  \label{eq:ctailoredy}
  \begin{aligned}
    \ctailoredy
      &= 4(n+1)\delta_nc'^2 \\
      &= 2^{2n+4} (n+1) \deltamax^2 \delta_n^{2n+3} \\
      &= 2^{2n+4} (n+1) \deltamax^2
                        \paren{\frac{\det(M)}{\deltamin \delta_0^n}}^{2n+3}.
  \end{aligned}
\end{equation}
Then, Theorem~\ref{thm:bound_over_an_interval} implies that the measure of the
set of $\bm{x} \in \domf$ which admit a solution $P$ to
\eqref{eq:irreducible_minkowski} such that
$\max \abs{P'(f_i(\bm{x}))} \leq \ctailoredf \varphi(Q)$ is bounded above by
\[
  \ctailoredf^{\frac{\alpha}{n+1}}
  \ctailoredy^{\frac{\alpha n}{n+1}}
  \nu(\domf)
\]
up to a constant. In particular, we may choose $\ctailoredf > 0$, depending only
on $\theta$, $n$, $\bm{f}$ and $\domf$, such that for every $\bm{x}$ in a subset
$\countdomgood{\theta} = \domtp(\delta_0, \ctailoredf)
 \subseteq \domtp(\delta_0)$
 of measure at least
$\sqrt{\theta} \nu\paren{ \domtp(\delta_0) } \geq \theta \nu(\domf)$ we
have $\min \abs{P'(f_k(\bm{x}))} > \ctailoredf \varphi(Q)$.

\section{Ubiquity}%
\label{sec:ubiquity}

\begin{definition}
  \label{def:generalised_hausdorff_measure}
  A \emph{dimension function} $g \colon \R^+ \to \R^+$ is a continuous
  increasing function such that $g(r) \to 0$ as $r \to 0$. Now suppose that $F$
  is a non-empty subset of a metric space $\Omega$. For $\rho > 0$, a
  \emph{$\rho$-cover} of $F$ is a countable collection $\{B_i\}$ of balls in
  $\Omega$ of radii $r(B_i) \leq \rho$ whose union contains $F$. Define
  \[
    \HausdorffM^g_\rho(F) \coloneqq
    \inf \Big\{ \sum_i g\paren*[\big]{r(B_i)} :
                 \{B_i\} \text{ is a $\rho$-cover of } F \Big\}.
  \]
  The \emph{(generalised) Hausdorff measure} $\Haus{F}$ of $F$ with respect to
  the dimension function $g$ is defined as
  \[
    \Haus{F}
    \coloneqq \lim_{\rho \to 0} \HausdorffM^g_\rho(F)
    =         \sup_{\rho   > 0} \HausdorffM^g_\rho(F).
  \]
  See \cite[Chapter~4]{Matti1995} for more details.
\end{definition}

\begin{example}
  \label{ex:hausdorff_measure}
  Given $s > 0$, the usual Hausdorff $s$-measure $\HausdorffM^s$ coincides with
  $\HausdorffM^g$ where $g(r) = r^s$. In particular, when $s$ is an integer
  $\HausdorffM^s$ is a constant multiple of the $s$-dimensional Lebesgue
  measure.
\end{example}

Define $\dualappr_{n,m+1}^*(\approxfdim,\approxfcodim; d)$ to be the set of
$\bm{x} \in \R^{m+1}$ such that
\[
  \max_{0 \leq k < d} \abs{x_k - \alpha_k}
    < \pointapproxf{\polyheight{\bm{\alpha}}}
  \quad \text{and} \quad
  \max_{d \leq k \leq m} \abs{x_k - \alpha_k}
    < \pointapproxf[\approxfcodim]{\polyheight{\bm{\alpha}}}
\]
for infinitely many $\bm{\alpha} \in \Alg{m+1}{n}$.
When $\bm{f}$ parametrises a $d$-dimensional manifold $\manifold$ and
$f_k(\bm{x}) = x_k$ for every $0 \leq k < d$, will also write
$\dualappr_{n,\bm{f}}^*(\approxfdim,\approxfcodim)$ for the set of
$\bm{x} \in \domf$ such that
$\bm{f}(\bm{x}) \in \dualappr_{n,m+1}^*(\approxfdim,\approxfcodim; d)$.
This section is devoted to the proof of the following Proposition, of which
Theorem~\ref{thm:polynomial_jarnik_divergence} is a direct consequence.

\begin{proposition}
  \label{prop:manifold_khincin_point_divergence}
  Let $\approxfdim,\approxfcodim \colon \R^+ \to \R^+$ be decreasing functions
  which satisfy \eqref{eq:cnd:approxfcodim_lower_bound}, and let $g$ be a
  dimension function such that $r^{-d}g(r)$ is non-increasing.
  Also assume that $r^{-\gamma}g(r)$ is increasing for some $\gamma > 0$, and
  that it satisfies \eqref{eq:cnd:ubiquity_divergence_dimension_function}.
  Further suppose that $\bm{f}$ is Lipschitz continuous,
  that $\Vand(\bm{f}) \neq 0$, and
  that $\bm{f}$ satisfies condition~\eqref{eq:cnd:minors_d} on $\domf$.
  Then
  \[
    \Haus{\dualappr_{n,\bm{f}}^*(\psi)} = \Haus{\domf}
    \quad \text{if} \quad
    \sum_{Q = 1}^{\infty}
      Q^{n-m-1+d} \approxfcodim(Q)^{m+1-d}
      g\paren{\pointapproxf{Q}}
    = \infty.
  \]
\end{proposition}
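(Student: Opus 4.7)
I would follow the standard ubiquity-to-divergence strategy: use Corollary~\ref{cor:n+1_points} to establish a local ubiquity of algebraic resonant points inside $\domf$, then invoke the Beresnevich-Velani ubiquity theorem \cite[Theorem~1]{BeresV2009} to upgrade this into the Hausdorff-measure divergence statement for $\dualappr^*_{n,\bm{f}}(\approxfdim,\approxfcodim)$. The regularity condition~\eqref{eq:condition_rho_regular} on $\approxfcodim$ is precisely the hypothesis needed by that theorem, which is why it is imposed.

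Fix a lacunary sequence $Q_t = \kappa^t$ and apply Corollary~\ref{cor:n+1_points} at each $Q_t$ with $\varphi_k(Q) = Q$, $\psi_k(Q)/Q = \pointapproxf[\approxfcodim]{Q}$ for $k \geq d$, and $\psi_k(Q)/Q = \rho(Q)$ for $k < d$, where $\rho(Q)$ is the ``residual'' scale determined by saturating $\bm{\psi}(Q)\bm{\varphi}(Q) \asymp 1$, namely $\rho(Q) \coloneqq \bigl(Q^{-(n+1)} \pointapproxf[\approxfcodim]{Q}^{-(m+1-d)}\bigr)^{1/d}$. Taking $\theta$ close to $1$ yields a subset $\domfgood{\theta} \subset \domf$ of near-full $\nu$-measure such that every $\bm{x} \in \domfgood{\theta}$ admits $n+1$ algebraic points $\bm{\alpha}$ of height $\leq Q_t$ with $|x_k - \alpha_k| \ll \rho(Q_t)$ for $k < d$ and $|f_k(\bm{x}) - \alpha_k| \ll \pointapproxf[\approxfcodim]{Q_t}$ for $d \leq k \leq m$. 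Setting $R_{\bm{\alpha}} \coloneqq (\alpha_0, \dotsc, \alpha_{d-1})$, the Lipschitz continuity of $\bm{f}$ together with $\approxfdim \in O(\approxfcodim)$ ensures that any $\bm{y} \in B(R_{\bm{\alpha}}, \rho(Q_t)) \subset \R^d$ already satisfies the $\approxfcodim$-part of the approximation condition via a triangle-inequality estimate on $|f_k(\bm{y}) - \alpha_k|$. Hence the $\rho(Q_t)$-balls around the $R_{\bm{\alpha}}$'s cover $\domfgood{\theta}$; exploiting the Federer property of $\nu$ and the Besicovitch property of $X$, one promotes this into a local ubiquitous system at scale $\rho(Q_t)$ inside every sub-ball of $\domf$.

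I would then invoke \cite[Theorem~1]{BeresV2009} with this ubiquitous system, the dimension function $g$, and the target approximation function $\pointapproxf{Q}$. Dyadic accumulation converts the hypothesis $\sum_Q Q^{n-m-1+d} \approxfcodim(Q)^{m+1-d} g(\pointapproxf{Q}) = \infty$ into its lacunary counterpart $\sum_t Q_t^{n-m+d} \approxfcodim(Q_t)^{m+1-d} g(\pointapproxf{Q_t}) = \infty$; multiplying by the unbounded factor $Q_t^{m+1-d}$ then shows that $\sum_t g(\pointapproxf{Q_t})/\rho(Q_t)^d \asymp \sum_t Q_t^{n+1} \approxfcodim(Q_t)^{m+1-d} g(\pointapproxf{Q_t})$, which is the series required by the ubiquity theorem, also diverges. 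The density factor $Q_t^{n+1} \approxfcodim^{m+1-d}$ matches the count of Corollary~\ref{cor:lower_bound_for_manifold_count} for resonant points, as a consistency check. The main obstacle will be matching the two-scale nature of the problem---distinct approximation functions $\approxfdim$ and $\approxfcodim$ on different coordinate blocks---to the single-scale ubiquity framework of \cite{BeresV2009}, and in particular verifying that the global covering coming from Corollary~\ref{cor:n+1_points} can be promoted to a genuine local ubiquitous system through the Federer and Besicovitch hypotheses.
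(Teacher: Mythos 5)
Your proposal follows essentially the same route as the paper's proof: restrict to a compact $\domf_q \subset \domf$ of nearly full measure, apply Corollary~\ref{cor:n+1_points} with $\psi_k(Q) = Q\rho(Q)$ for $k<d$, $\psi_k(Q) = \approxfcodim(Q)$ for $d\leq k\leq m$, and $\varphi_k(Q) = Q$ to build a locally ubiquitous system of truncated algebraic vectors $\hat{\bm{\alpha}}$ at scale $\rho(Q)$, absorb the $\approxfcodim$-block into the definition of the index set $J$ and handle the transfer $f_k(\hat{\bm{\alpha}})\leftrightarrow f_k(\bm{x})$ via the Lipschitz bound, and then invoke \cite[Theorem~1]{BeresV2009}; condition~\eqref{eq:condition_rho_regular} is used exactly to verify the required regularity of $\rho$.

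One small slip in the final bookkeeping: with your $\rho$ one has
\[
  \frac{g\paren{\pointapproxf{Q_t}}}{\rho(Q_t)^d}
  \asymp Q_t^{\,n-m+d}\,\approxfcodim(Q_t)^{m+1-d}\,g\paren{\pointapproxf{Q_t}},
\]
not $Q_t^{n+1}\approxfcodim(Q_t)^{m+1-d}g(\pointapproxf{Q_t})$, so the ubiquity series already coincides (up to constants) with the Cauchy-condensed form of the hypothesis series; the ``multiply by the unbounded factor $Q_t^{m+1-d}$'' step is unnecessary and signals a misplaced power of $Q$ somewhere in the computation. It happens to be harmless for divergence, but correcting the exponent makes the matching of the series transparent.
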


\begin{remark}
  \label{rmk:approximation_set_inclusion}
  There is a constant $\capprmax > 0$, depending only on $n$ and $\manifold$,
  such that
  \[
    \dualappr_{n,\bm{f}}^*(\approxfdim,\approxfcodim)
    \subseteq \dualappr_{n,\bm{f}}(\capprmax\approxfdim, \capprmax\approxfcodim).
  \]
  Indeed, suppose that $\bm{y} \in \dualappr_{n,\bm{f}}^*(\psi)$, and
  let $\bm{\alpha} \in \Alg{m+1}{n}$ be such that
  $\norm{\bm{y} - \bm{\alpha}} < \pointapproxf{\polyheight{\bm{\alpha}}}$. If
  $P$ is the minimum polynomial of $\alpha_0$ (and hence of $\alpha_k$ for every
  $0 \leq k \leq m+1$), then by the Mean Value Theorem we have
  \[
    \begin{aligned}
      \abs{P(y_k)} &= \abs{P(y_k) - P(\alpha_k)} \\
                   &\leq \abs{y_k - \alpha_k}
                         \sup_{\bm{z} \in \manifold} \abs{P'(z_k)} \\
                   &< \capprmax \pointapproxf[\psi_k]{\polyheight{\bm{\alpha}}}
                      \polyheight{P} \\
                   &= \capprmax \psi_k(\polyheight{\bm{\alpha}})
    \end{aligned}
    \qquad \text{with} \qquad
    \psi_k =
    \begin{cases}
      \psi & \text{for } 0 \leq k < d \\
      \Psi & \text{for } d \leq k \leq m
    \end{cases},
  \]
  since $\domf$ bounded implies that $P'$ is bounded above on $\manifold$, and
  of course $\polyheight{P} = \polyheight{\bm{\alpha}}$. Thus it follows that
  $\bm{y} \in \dualappr_{n,\bm{f}}(\capprmax\approxfdim,\capprmax\approxfcodim)$,
  as required.
\end{remark}

Therefore the convergence part of
Theorem~\ref{thm:independent_polynomial_khinchin} immediately gives the
following partial counterpart of
Proposition~\ref{prop:manifold_khincin_point_divergence}. Here, as before,
$\abs{U}$ denotes the Lebesgue measure of a measurable set $U \subset \R^{m+1}$.

\begin{lemma}
  \label{lm:manifold_khinchin_point_convergence}
  For any function $\psi \colon \R^+ \to \R^+$ we have
  \[
    \Leb{\dualappr_{n,\bm{f}}^*(\approxfdim,\approxfcodim)} = 0
    \quad \text{if} \quad
    \sum_{Q = 1}^{\infty}
      Q^{n-m-1} \approxfdim(Q)^d \approxfcodim(Q)^{m+1-d} < \infty.
  \]
\end{lemma}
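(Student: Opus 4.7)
The plan is to combine Remark~\ref{rmk:approximation_set_inclusion} with a direct convergence Borel--Cantelli argument in $\R^d$, adapting the classical reasoning behind the convergence part of Theorem~\ref{thm:independent_polynomial_khinchin} to the case of two different approximating functions $\approxfdim,\approxfcodim$ on the two blocks of coordinates. First I would invoke Remark~\ref{rmk:approximation_set_inclusion} to replace the starred approximation set with the polynomial one, reducing the problem to showing $\Leb{\dualappr_{n,\bm{f}}(\capprmax\approxfdim,\capprmax\approxfcodim)}=0$. Writing
\[
  B_P := \bigl\{\bm{x}\in\domf :
            \max_{0\leq k<d}\abs{P(x_k)}<\capprmax\approxfdim(\polyheight P),\
            \max_{d\leq k\leq m}\abs{P(f_k(\bm{x}))}<\capprmax\approxfcodim(\polyheight P)
         \bigr\},
\]
we have $\dualappr_{n,\bm{f}}(\capprmax\approxfdim,\capprmax\approxfcodim)=\limsup_P B_P$, so by the convergence Borel--Cantelli lemma it is enough to prove $\sum_{P\in\Z[X],\,\deg P\leq n}\Leb{B_P}<\infty$.

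To estimate $\Leb{B_P}$ for a single $P$ of height $Q=\polyheight P$, I would treat the $m+1$ defining inequalities as essentially independent one-dimensional constraints. The first $d$ constraints involve $x_0,\dotsc,x_{d-1}$ directly and, for a generic irreducible $P$ of degree at most $n$, the classical Sprind\v{z}uk-type bound gives $\Leb{\{y\in I:\abs{P(y)}<\varepsilon\}}\ll \varepsilon/Q$ on any bounded interval. The remaining $m+1-d$ constraints are of the form $\abs{P(f_k(\bm{x}))}<\capprmax\approxfcodim(Q)$; using Lipschitz continuity of $\bm{f}$ (and the fact that the first $d$ coordinates of $\bm{f}$ are just $\bm{x}$, so $\bm{f}$ is a Lipschitz graph over $\domf$), each of these can be converted into a transverse $\ll \approxfcodim(Q)/Q$-thin condition on $\bm{x}$. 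Multiplying the $m+1$ contributions gives $\Leb{B_P}\ll (\approxfdim(Q)/Q)^{d}(\approxfcodim(Q)/Q)^{m+1-d}$, and summing over the $\asymp Q^{n}$ polynomials of height $Q$ and then over $Q$ recovers exactly the series
\[
  \sum_{Q=1}^{\infty} Q^{n-m-1}\approxfdim(Q)^{d}\approxfcodim(Q)^{m+1-d},
\]
which converges by hypothesis.

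The hard part will be dealing with the ``exceptional'' polynomials for which the one-dimensional bound $\Leb{\{\abs{P}<\varepsilon\}}\ll\varepsilon/Q$ fails, i.e.\ those whose derivative is small near the root being approximated or which are nearly reducible. The standard remedy, going back to Sprind\v{z}uk and used in the same form in \cite{BeresBB2017}, is to split $\Z[X]_{n}$ according to the size of $\abs{P'}$ at the relevant root: on the ``large derivative'' class the naive bound above works, while on the ``small derivative'' class one controls the contribution via discriminant/Vandermonde estimates, with the clustering of algebraic numbers handled by a separate sub-argument. This refinement does not change the dominant term in the sum, so the convergence condition on $\approxfdim^{d}\approxfcodim^{m+1-d}$ still suffices and the lemma follows.
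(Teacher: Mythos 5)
Your first step --- using Remark~\ref{rmk:approximation_set_inclusion} to pass from the starred set to
$\dualappr_{n,\bm{f}}(\capprmax\approxfdim,\capprmax\approxfcodim)$ --- is exactly what the paper does. But from that point the paper simply invokes the convergence part of Theorem~\ref{thm:independent_polynomial_khinchin} (i.e.\ the result of \cite{BeresBB2017}, whose setting already covers dependent variables) as a black box, whereas you attempt to re-derive that convergence statement from scratch, and your sketch has a genuine gap at its central step.

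The gap is the claimed bound $\Leb{B_P}\ll(\approxfdim(Q)/Q)^{d}(\approxfcodim(Q)/Q)^{m+1-d}$ obtained by ``multiplying the $m+1$ contributions''. The set $B_P$ lives in $\domf\subset\R^{d}$, and only the first $d$ constraints $\abs{P(x_k)}<\capprmax\approxfdim(Q)$ factor over independent coordinates, so only those $d$ measures may legitimately be multiplied via Fubini. The remaining $m+1-d$ constraints $\abs{P(f_k(\bm{x}))}<\capprmax\approxfcodim(Q)$ are conditions on the \emph{same} $d$-dimensional variable $\bm{x}$: intersecting them with the first block can at best give a minimum, never a product, and Lipschitz continuity of $f_k$ gives no upper bound whatsoever on $\Leb{\{\bm{x}:\abs{P(f_k(\bm{x}))}<\varepsilon\}}$ (a constant $f_k$ is Lipschitz, and then this set can be all of $\domf$). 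Using only the first $d$ constraints yields $\sum_Q Q^{n-d}\approxfdim(Q)^{d}$, which is strictly weaker than the hypothesis. The saving factor $(\approxfcodim(Q)/Q)^{m+1-d}$ must instead come from reducing the \emph{number} of polynomials $P$ compatible with the dependent constraints on a small ball (equivalently, a lattice-point count in coefficient space), and this is precisely where the nondegeneracy hypotheses on $\bm{f}$ (non-vanishing of $\Vand(\bm{f})$ and of $\det U^h_{\bm{f}}$) enter; it is not free. Likewise, the exceptional polynomials with small derivative at the relevant roots are not a ``refinement that does not change the dominant term'' --- controlling them is the bulk of the Sprind\v{z}uk--Bernik-type analysis carried out in \cite{BeresBB2017}. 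As written, your argument establishes the lemma only modulo exactly the result the paper cites; either cite it as the paper does, or supply the counting and exceptional-class arguments in full.
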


Our proof relies on a powerful tool of Diophantine Approximation,
\emph{ubiquitous systems}, adapted to the case of approximation of dependent
quantities like in \cite{Beres2012}. Consider the following setting:
\begin{itemize}
  \item $\Omega$, a compact subset of $\R^d$;
  \item $J$, a countable set;
  \item $\resonant = (R_{\bm{\alpha}})_{\bm{\alpha} \in J}$ a family of points
    in $\Omega$ indexed by $J$, referred to as \emph{resonant points};
  \item a function
    $\weight \colon J \to \R^+, \bm{\alpha} \mapsto \beta_{\bm{\alpha}}$, which
    assigns a \emph{weight} to each $R_{\bm{\alpha}}$ in $\resonant$;
  \item a function $\rho \colon \R^+ \to \R^+$ such that
    $\lim_{r \to \infty} \rho(r) = 0$, referred to as a \emph{ubiquitous
    function}; and
  \item $J(t) = J_{\kappa}(t) \coloneqq
    \{ \bm{\alpha} \in J \colon \weight_{\bm{\alpha}} \leq \kappa^t \}$, assumed
    to be finite for every $t \in \N$, where $\kappa > 1$ is fixed.
\end{itemize}

Furthermore, $B(\bm{x}, r)$ will denote a ball in $\Omega$ with respect
to the sup norm, and for a given function $\approxfubq \colon \R^+ \to \R^+$ we
will also consider the limsup set
\[
  \lsset(\approxfubq)
  \coloneqq \{ \bm{x} \in \Omega :
    \norm{\bm{x} - R_{\bm{\alpha}}} < \approxfubq(\weight_{\bm{\alpha}})
    \text{ for infinitely many } \bm{\alpha} \in J \}.
\]

\begin{definition}
  The pair $(\resonant, \weight)$ is a \emph{locally ubiquitous system in
  $\Omega$ with respect to $\rho$} if for any ball $B \in \Omega$
  \[
    \Leb[\Big]{\bigcup_{\bm{\alpha} \in J(t)} B(\bm{\alpha}, \rho(\kappa^t))
               \cap B}
    \gg \Leb{B}
  \]
  for every $t$ large enough, where the implied constant is absolute.
\end{definition}

Like with \cite[Theorem~1]{BeresV2009}, the following statement can be readily
obtained by combining Corollaries 2 and 3 from \cite{BeresDV2006}:

\begin{theorem}
  \label{thm:ubiquity_divergence}
  In the above setting, suppose that $(\resonant, \weight)$ is a locally
  ubiquitous system in $\Omega$ with respect to $\rho$, and let $g$ be a
  dimension function such that $r^{-d}g(r)$ is non-increasing. Furthermore,
  supose that $r^{-\gamma}g(r)$ is increasing for some $\gamma > 0$, and that
  there are constants $r_0,\cubdiva,\cubdivb \in (0,1)$ such that
  \begin{equation*}
    r^{\gamma} g(\cubdiva r) \leq \cubdivb g(r) (\cubdiva r)^{\gamma}
    \text{ for any } r \in (0,r_0).
  \end{equation*}
  Also assume that $\approxfubq$ is decreasing and that
  \begin{equation}
    \label{eq:cnd:ubiquity_divergence_regular}
    \limsup_{t \to \infty}
      \frac{\approxfubq(\kappa^{t+1})}{\approxfubq(\kappa^t)} < 1.
  \end{equation}
  Then
  \[
    \Haus[]*[\big]{\lsset(\approxfubq)} = \Haus{\Omega}
    \quad \text{if} \quad
    \sum_{t = 0}^\infty \frac{g(\approxfubq(\kappa^t))}{\rho(\kappa^t)^d} = \infty.
  \]
\end{theorem}

Now let $\bm{f}$, $\approxfdim$ and $\approxfcodim$ be as in
Proposition~\ref{prop:manifold_khincin_point_divergence}. Since $\domf$ is
assumed to be bounded, for every integer
$q \geq 2$ we may find a compact subset $\domf_q \subset \domf$ such that
$\Leb{\domf_q} \geq (1 - \frac{1}{q}) \Leb{\domf}$. It follows that
$\Haus{\domf} = \lim_{q \to \infty} \Haus{\domf_q}$, so it suffices to prove the
proposition with $\domf_q$ in place of $\domf$ for any fixed $q$.
For ease of notation, given $\bm{y} = (y_0,\dotsc,y_m) \in \R^{m+1}$ we will
write $\hat{\bm{y}}$ for $(y_0,\dotsc,y_{d-1})$. Then let
$\Omega \coloneqq \domf_q$ and define
\begin{gather*}
  J \coloneqq \left\{ \bm{\alpha} \in \Alg{m+1}{n} :
    \hat{\bm{\alpha}} \in \Omega
    \text{ and }
    \max_{d \leq k \leq m} \abs{f_k(\hat{\bm{\alpha}}) - \alpha_k}
    < \frac{1}{2} \pointapproxf[\approxfcodim]{\polyheight{\alpha}} \right\} \\
  \resonant \coloneqq (\hat{\bm{\alpha}})_{\bm{\alpha} \in J}
  \qquad
  \beta_{\bm{\alpha}} \coloneqq \polyheight{\bm{\alpha}}.
\end{gather*}
Also let
\[
  \rho(Q)
  = \rho_0 \paren*[\big]{Q^{n-m+d} \approxfcodim(Q)^{m+1-d}}^{-\frac{1}{d}}
\]
for some constant $\rho_0 > 0$ to be determined later, and observe that
\eqref{eq:cnd:approxfcodim_lower_bound} implies
\begin{align*}
  \rho(Q)
  &\ll \paren*[\big]{Q^{n-m+d} Q^{\frac{m-n}{m+1}(m+1-d)}}^{-\frac{1}{d}} \\
  &= \paren*[\big]{Q^{n-m+d} Q^{m - n - \frac{d}{m+1}}}^{-\frac{1}{d}} \\
  &= Q^{-1 + \frac{1}{m+1}},
\end{align*}
which shows that $\rho(Q) \to 0$ as $Q \to \infty$.

\begin{lemma}
  \label{lm:lipschitz_bound}
  Suppose that $f_d,\dotsc,f_m$ are Lipschitz continuous with constant bounded
  above by $\lipf$. If $\bm{y} \in \R^{m+1}$ is such that
  $\hat{\bm{y}} \in \domf$,
  \[
    \max_{0 \leq k < d} \abs{x_k - y_k} < \Theta_{\bm{x}}
    \quad \text{and} \quad
    \max_{d \leq k \leq m} \abs{f_k(\bm{x}) - y_k} < \Theta_{\bm{f}}
  \]
  for some $\Theta_{\bm{x}}, \Theta_{\bm{f}} > 0$, then
  \[
    \max_{d \leq k \leq m} \abs{f_k(\hat{\bm{y}}) - y_k}
    < \Theta_{\bm{f}}
      \paren{1 + \lipf \frac{\Theta_{\bm{x}}}{\Theta_{\bm{f}}}}.
  \]
  \begin{proof}
    Simply observe that, by the triangle inequality,
    \begin{align*}
      \abs{f_k(\hat{\bm{y}}) - y_k}
      &\leq \abs{f_k(\hat{\bm{y}}) - f_k(\bm{x})} + \abs{f_k(\bm{x}) - y_k} \\
      &<     \lipf \norm{\bm{x} - \hat{\bm{y}}} + \abs{f_k(\bm{x}) - y_k} \\
      &<     \lipf \Theta_{\bm{x}} + \Theta_{\bm{f}} \\
      &=     \Theta_{\bm{f}}
             \paren{1 + \lipf \frac{\Theta_{\bm{x}}}{\Theta_{\bm{f}}}}.
      \qedhere
    \end{align*}
  \end{proof}
\end{lemma}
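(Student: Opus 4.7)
The plan is essentially a single application of the triangle inequality combined with the Lipschitz hypothesis, so the proof should be very short. For each index $k$ with $d \leq k \leq m$, I would write
\[
  \abs{f_k(\hat{\bm{y}}) - y_k}
  \leq \abs{f_k(\hat{\bm{y}}) - f_k(\bm{x})} + \abs{f_k(\bm{x}) - y_k}
\]
and bound the two summands separately.

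The second summand is bounded directly by $\Theta_{\bm{f}}$ from the assumption $\max_{d \leq k \leq m} \abs{f_k(\bm{x}) - y_k} < \Theta_{\bm{f}}$. For the first summand, I would invoke the Lipschitz continuity of $f_k$ with constant at most $\lipf$ to obtain $\abs{f_k(\hat{\bm{y}}) - f_k(\bm{x})} \leq \lipf \norm{\bm{x} - \hat{\bm{y}}}$. Here it is important to read $\hat{\bm{y}}$ as the vector of the first $d$ coordinates of $\bm{y}$, which (since $X = \R^d$ and we are in the case $f_i(\bm{x}) = x_i$ for $0 \leq i < d$) lies in the domain $\domf$ and is directly comparable to $\bm{x}$ under the sup norm. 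Then $\norm{\bm{x} - \hat{\bm{y}}} = \max_{0 \leq k < d} \abs{x_k - y_k} < \Theta_{\bm{x}}$ by hypothesis.

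Adding the two bounds gives $\abs{f_k(\hat{\bm{y}}) - y_k} < \lipf \Theta_{\bm{x}} + \Theta_{\bm{f}}$, and factoring out $\Theta_{\bm{f}}$ rewrites this as $\Theta_{\bm{f}}\paren{1 + \lipf \Theta_{\bm{x}}/\Theta_{\bm{f}}}$, which is the claimed inequality. Taking the maximum over $k$ completes the proof. There is no real obstacle here: the only care needed is the identification of $\hat{\bm{y}}$ as an element of $\R^d$ so that the Lipschitz estimate applies, and the observation that the bound is uniform in $k$ so the maximum can be inserted at the end.
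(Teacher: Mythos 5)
Your proof is correct and follows essentially the same route as the paper's: triangle inequality to split into two terms, Lipschitz bound for the first, direct hypothesis for the second, then factor out $\Theta_{\bm{f}}$. Nothing to add.
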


\begin{lemma}
  \label{lm:ubiquitous_system}
  Let $J,\resonant,\weight,\rho$ be as above, and suppose that $\bm{f}$ is
  Lipschitz continuous. Then there is a choice of $\rho_0 > 0$ such that
  $(\resonant,\weight)$ is a locally ubiquitous system in $\Omega$ with respect
  to $\rho$.
  \begin{proof}
    Fix a ball $B \subset \Omega$ and let
    \[
      \begin{cases}
        \psi_k(Q) = Q\rho(Q) & \text{for } 0 \leq k < d \\
        \psi_k(Q) = \approxfcodim(Q) & \text{for } d \leq k \leq m \\
        \varphi_k(Q) = Q & \text{for } m < k \leq n.
      \end{cases}
    \]
    Then observe that $\frac{\psi_k(Q)}{Q}$ is decreasing for every
    $0 \leq k \leq m$, and that
    \[
      \bm{\psi}(Q)\bm{\varphi}(Q)
      = \rho(Q)^d \approxfcodim(Q)^{m+1-d} Q^{n-m+d}
      = \rho_0^d.
    \]
    Furthermore, by condition \eqref{eq:cnd:approxfcodim_lower_bound} we
    know that $\approxfcodim(Q) \gg Q^{\frac{m-n}{m+1}}$, which implies
    $\approxfcodim(Q) \gg Q\rho(Q)$. Therefore for every $1 \leq \tau \leq m+1$
    and every choice of $I \in \indexsets{m}{\tau}$ we have that
    \[
      \psi_I(Q) \ll \approxfcodim(Q)^{\tau} \ll 1
    \]
    for every $Q$ large enough, thus
    we may apply Corollary~\ref{cor:n+1_points} with
    $\nu(\cdot) = \Leb{}$ and $B$ in place of $\domf$. Hence for any fixed
    $0 < \theta < 1$ we find a set $B_\theta \subseteq B$ with
    $\Leb{B_\theta} > \theta \Leb{B}$ and a constant $c > 0$
    such that, for $t$ large enough, every $\bm{x} \in B_\theta$ admits $n+1$
    points $\bm{\alpha} \in \alg{m+1}{n}{c\kappa^t}$ with
    \[
      \max_{0 \leq k < d} \abs{x_k - \alpha_k} < c \rho(\kappa^t)
      \quad \text{and} \quad
      \max_{d \leq k \leq m} \abs{f_k(\bm{x}) - \alpha_k}
        < c \pointapproxf[\approxfcodim]{\kappa^t}.
    \]
    Now, again because of $\approxfcodim(Q) \gg Q\rho(Q)$, by
    Lemma~\ref{lm:lipschitz_bound} it follows that
    \[
      \max_{d \leq k \leq m} \abs{f_k(\hat{\bm{\alpha}}) - \alpha_k}
        < c \hat{c} \pointapproxf[\approxfcodim]{\kappa^t}
    \]
    for some $\hat{c} > 1$.
    Finally, observe that Remark~\ref{rmk:n+1_points_constant} and
    equation~\eqref{eq:ctailoredy} show that we can choose $c$ by manipulating
    the value of $\rho_0$. In particular, we can ensure that
    $c < \hat{c}^{-1}$, thus $\bm{\alpha} \in J(t)$ and
    \[
      \Leb[\Big]{\bigcup_{\bm{\alpha} \in J(t)} B(\bm{\alpha}, \rho(\kappa^t))
                 \cap B}
      \geq \Leb{B_\theta}
      > \theta \Leb{B}. \qedhere
    \]
  \end{proof}
\end{lemma}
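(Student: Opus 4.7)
The plan is to invoke Corollary~\ref{cor:n+1_points} on an arbitrary ball $B \subset \Omega$ and then check that the algebraic points it delivers actually lie in $J(t)$. The first step is to pick approximation functions making the volume product $\bm{\psi}(Q)\bm{\varphi}(Q)$ a constant that we can tune through $\rho_0$. I would take $\psi_k(Q) = Q\rho(Q)$ for $0 \leq k < d$, $\psi_k(Q) = \approxfcodim(Q)$ for $d \leq k \leq m$, and $\varphi_k(Q) = Q$ for $k > m$ (including $\varphi_{m+1}$). From the very definition of $\rho$, the product $\bm{\psi}(Q)\bm{\varphi}(Q) = \rho(Q)^d \approxfcodim(Q)^{m+1-d} Q^{n-m+d}$ collapses to $\rho_0^d$ identically, so one may take $\deltamin = \deltamax = \rho_0^d$. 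Monotonicity of $\psi_k/\varphi$ follows since $\rho$ and $\approxfcodim$ are decreasing, and condition~\eqref{eq:condition_minors_d} is inherited directly from the hypothesis on $\bm{f}$.

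Second, applying Corollary~\ref{cor:n+1_points} with $\nu = \Leb{}$ and $B$ in place of $\domf$ produces, for $Q = \kappa^t$ large enough, a subset $B_\theta \subseteq B$ with $|B_\theta| > \theta |B|$ such that every $\bm{x} \in B_\theta$ admits $n+1$ distinct points $\bm{\alpha}$ with algebraic conjugate coordinates of height $\polyheight{\bm{\alpha}} \ll \kappa^t$ and satisfying $|x_k - \alpha_k| < c\rho(\kappa^t)$ for $k < d$ and $|f_k(\bm{x}) - \alpha_k| < c\approxfcodim(\kappa^t)/\kappa^t$ for $d \leq k \leq m$. The implicit constant on the height can be absorbed into a rescaling of $\kappa$ (or of the indexing $t$), so I may assume $\polyheight{\bm{\alpha}} \leq \kappa^t$. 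The inclusion $\hat{\bm{\alpha}} \in \Omega$ is automatic for $t$ large because $B$ lies at positive distance from $\partial\Omega$ and $\rho(\kappa^t) \to 0$.

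The third step transfers the approximation from $f_k(\bm{x})$ to $f_k(\hat{\bm{\alpha}})$. Applying Lemma~\ref{lm:lipschitz_bound} with $\Theta_{\bm{x}} = c\rho(\kappa^t)$ and $\Theta_{\bm{f}} = c\approxfcodim(\kappa^t)/\kappa^t$ gives $|f_k(\hat{\bm{\alpha}}) - \alpha_k| \leq c(1 + \lipf)\approxfcodim(\kappa^t)/\kappa^t$ for every $d \leq k \leq m$. To secure $\bm{\alpha} \in J(t)$, I need this upper bound to be at most $\tfrac{1}{2}\approxfcodim(\polyheight{\bm{\alpha}})/\polyheight{\bm{\alpha}}$, i.e. $c(1 + \lipf) \leq \tfrac{1}{2}$ (using monotonicity of $\approxfcodim(Q)/Q$).

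The main obstacle, and the reason the choice of $\rho_0$ is delicate, lies precisely here: $c = \ctailoredy/\ctailoredf$ depends on $\deltamin, \deltamax, \det M, \delta_0, n$ and $\bm{f}$ through the explicit formula~\eqref{eq:ctailoredy}, and those dependencies are coupled to $\rho_0$ via $\deltamin = \deltamax = \rho_0^d$. By inspecting~\eqref{eq:ctailoredy} and Remark~\ref{rmk:n+1_points_constant} one sees that $\rho_0$ (and, if needed, $\delta_0$) can be chosen so that $c(1 + \lipf) \leq \tfrac{1}{2}$, at which point every $\bm{\alpha}$ produced by the corollary lies in $J(t)$. The inclusion $B_\theta \subseteq \bigcup_{\bm{\alpha} \in J(t)} B(\hat{\bm{\alpha}}, \rho(\kappa^t)) \cap B$ combined with $|B_\theta| > \theta|B|$ then yields the required local ubiquity bound.
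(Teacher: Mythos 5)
Your proposal is correct and follows the paper's own proof essentially step for step: the same choice of $\psi_k$ and $\varphi_k$ collapsing the volume product to $\rho_0^d$, the same application of Corollary~\ref{cor:n+1_points} with $\nu = \Leb{}$ on $B$, and the same tuning of $\rho_0$ via Remark~\ref{rmk:n+1_points_constant} and \eqref{eq:ctailoredy}. If anything you are more explicit than the paper, whose proof leaves the Lipschitz transfer from $f_k(\bm{x})$ to $f_k(\hat{\bm{\alpha}})$ and the verification that the resulting points actually lie in $J(t)$ implicit.
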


\begin{note}
  Condition \eqref{eq:cnd:t_positive_d} is actually satisfied even in the
  absence of \eqref{eq:cnd:approxfcodim_lower_bound}. Indeed, using the
  fact that $\approxfcodim$ is decreasing and that $\bm{\psi}\bm{\varphi}$ is
  constant, one can show that $\psi_I$ is decreasing for every
  $I = (m+1-\tau,\dotsc,m)$ where $1 \leq \tau \leq m+1$.
\end{note}

\begin{proof}[Proof of Proposition~\ref{prop:manifold_khincin_point_divergence}]
  Note that since $\approxfdim \in O(\approxfcodim)$, there is a
  $\cggapprox > 0$ such that $\approxfcodim(Q) > \cggapprox \approxfdim(Q)$ for
  any integer $Q > 0$.
  Then let $\approxfubq(Q) = \frac{\cggapprox}{2\lipf} \pointapproxf{Q}$, where
  $\lipf$ is as in Lemma~\ref{lm:lipschitz_bound}. It is clear that this choice
  of $\approxfubq$ satisfies \eqref{eq:cnd:ubiquity_divergence_regular}: indeed,
  \[
    \limsup_{t \to \infty}
      \frac{\approxfubq(\kappa^{t+1})}{\approxfubq(\kappa^t)}
    = \frac{1}{\kappa}
      \limsup_{t \to \infty}
        \frac{\approxfdim(\kappa^{t+1})}{\approxfdim(\kappa^t)}
    < \frac{1}{\kappa}
  \]
  since $\approxfdim$ is assumed to be decreasing.
  The proposition will follow as an immediate consequence of
  Theorem~\ref{thm:ubiquity_divergence} once we've shown that
  $\lsset(\approxfubq) \subseteq
   \dualappr_{n,\bm{f}}^*(\approxfdim,\approxfcodim)$.

  If $\bm{x} \in \lsset(\approxfubq)$, then there are infinitely many
  $\bm{\alpha} \in \Alg{m+1}{n}$ such that
  \[
    \max_{0 \leq k < d} \abs{x_k - \alpha_k}
      < \frac{\cggapprox}{2\lipf} \pointapproxf{\polyheight{\bm{\alpha}}}
    \quad \text{and} \quad
    \max_{d \leq k \leq m} \abs{f_k(\hat{\bm{\alpha}}) - \alpha_k}
    < \frac{1}{2} \pointapproxf[\approxfcodim]{\polyheight{\bm{\alpha}}}.
  \]
  Therefore the same argument of Lemma~\ref{lm:lipschitz_bound} gives
  \[
    \max_{d \leq k \leq m} \abs{f_k(\bm{x}) - \alpha_k}
    < \pointapproxf[\approxfcodim]{\polyheight{\bm{\alpha}}}.
  \]
  It follows that
  $\bm{x} \in \dualappr_{n,\bm{f}}^*(\approxfdim,\approxfcodim)$, since we may
  assume without loss of generality that $\lipf \geq \frac{1}{2}$.
  The proof is concluded by observing that by Cauchy's Condensation Test
  \[
    \sum_{t = 0}^{\infty} \frac{g(\approxfubq(\kappa^t))}{\rho(\kappa^t)^d}
    = \rho_0^{-d} \sum_{Q = 1}^{\infty}
      \kappa^{t(n-m+d)} \approxfcodim(\kappa^t)^{m+1-d}
      g\paren{\frac{\cggapprox}{2\lipf} \pointapproxf{\kappa^t}}
    = \infty
  \]
  if and only if
  \[
    S_1 \coloneqq \sum_{Q = 1}^{\infty}
      Q^{n-m-1+d} \approxfcodim(Q)^{m+1-d}
      g\paren{\frac{\cggapprox}{2\lipf} \pointapproxf{Q}}
    = \infty,
  \]
  and that the same argument of
  \cite[Lemma~3.2]{Pezzo2020} shows that the latter happens if and only if
  \[
    S_2 \coloneqq \sum_{Q = 1}^{\infty}
      Q^{n-m-1+d} \approxfcodim(Q)^{m+1-d}
      g\paren{\pointapproxf{Q}}
    = \infty
  \]
  when $g$ is increasing and $\approxfdim$ is decreasing. Indeed, note that
  without loss of generality we may assume that $2\lipf > \cggapprox$, and let
  $c \coloneqq \frac{2\lipf}{\cggapprox} > 1$. Furthermore, for ease of notation
  let
  \[
    \sigma(z, Q) \coloneqq
      Q^{n-m-1+d} \approxfcodim(Q)^{m+1-d}
      g\paren{z \pointapproxf{Q}}.
  \]
  Now, on one hand $g$ increasing immediately implies that $S_1 \leq S_2$.
  On the other hand,
  \begin{align*}
    S_2
    &= \sum_{Q = 1}^{c-1} \sigma(1,Q)
     + \sum_{q = 1}^{\infty}
       \sum_{cq \leq Q < c(q+1)} \sigma(1,Q) \\
    &\ll \sum_{q = 1}^{\infty} \sigma(1.cq) \\
    &\ll \sum_{q = 1}^{\infty} \sigma(c^{-1}.q) \\
    &= S_1,
  \end{align*}
  where the last inequality is due to the fact that $\psi$ is decreasing and $g$
  is increasing.
\end{proof}

\section{Proofs of Theorem~\ref{thm:bound_over_an_interval}
         and Corollary~\ref{cor:metric_convergence}}
\label{sec:proof_of_the_main_theorem}

We will first prove the following local version of
Theorem~\ref{thm:bound_over_an_interval}, which can then be extended via a
compactness argument.

\begin{theorem}
  \label{thm:local_bound_over_an_interval}
  Under the hypotheses of Theorem~\ref{thm:bound_over_an_interval}, fix a point
  $\bm{x} \in \domf \cap \supp\nu$ and let $B \ni \bm{x}$ be a ball such that
  $\tilde{B} = 3^{n+1}B \subset \domf$.
  Then for $Q$ large enough we may find constants
  $C,\rho > 0$, the latter dependant on $B$, such that
  \[
    \nu\paren{\maxderset^n_{\bm{f}}(Q, B)}
    \leq C \paren{ \frac{\bm{\psi}(Q) \bm{\varphi}(Q)}
                        {\rho^{n+1}} }^\frac{\alpha}{n+1}
         \nu(B).
  \]
\end{theorem}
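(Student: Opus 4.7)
The approach is to reformulate \eqref{eq:d_dimensional_inequalities} as a short-vector problem for a family of lattices parametrised by $\bm{x}$ and then invoke the Kleinbock--Lindenstrauss--Weiss quantitative non-divergence estimate (\cite[Theorem~4.3]{KleinLW2004}, which generalises \cite{KleinM1998} to Federer measures). Fix $\bm{x} \in \maxderset^n_{\bm{f}}(Q, B)$ together with a witness polynomial $P = a_0 + \dotsb + a_n X^n$, and let $h = h(\bm{x}) \in \intrange{m}$ be an index achieving $\max_i \abs{P'(f_i(\bm{x}))}$. With $g_Q$ the diagonal matrix having entries $\psi_0(Q)^{-1}, \dotsc, \psi_m(Q)^{-1}, \varphi_{m+1}(Q)^{-1}, \dotsc, \varphi_n(Q)^{-1}$, the system is equivalent to the lattice $g_Q U^h_{\bm{f}}(\bm{x}) \Z^{n+1}$ containing a non-zero vector of sup-norm at most $1$. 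Hence
\[
  \maxderset^n_{\bm{f}}(Q, B)
  \subseteq \bigcup_{h=0}^{m}
    \big\{ \bm{x} \in B :
           \lambda_1\paren*[\big]{g_Q U^h_{\bm{f}}(\bm{x}) \Z^{n+1}} \leq 1
    \big\}.
\]

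Fix $h$, and note that $\abs{\det U^h_{\bm{f}}}$ is bounded away from zero on $B \cap \supp\nu$ by the hypothesis $\det(U^0_{\bm{f}} \dotsm U^m_{\bm{f}}) \neq 0$ and continuity. After rescaling $g_Q U^h_{\bm{f}}$ to unit covolume, the shortness threshold becomes $\varepsilon_Q \asymp \paren*[\big]{\bm{\psi}(Q)\bm{\varphi}(Q)}^{1/(n+1)}$. Applying the non-divergence estimate to this family of unimodular lattices then yields the claimed bound $\ll \paren*[\big]{\varepsilon_Q/\rho}^\alpha \nu(B)$, provided that for every $1 \leq \tau \leq n$ and every $I \in \indexsets{n}{\tau}$ the components of $\grass[I][g_Q U^h_{\bm{f}}(\cdot)]$ are $(C,\alpha)$-good on $\tilde{B}$ with respect to $\nu$ and satisfy
\[
  \sup_{\bm{x} \in B \cap \supp\nu}
    \norm*[\big]{\grass[I][g_Q U^h_{\bm{f}}(\bm{x})]}
  \gg \rho^\tau
     \paren*[\big]{\bm{\psi}(Q) \bm{\varphi}(Q)}^{-\tau/(n+1)}
\]
for some $\rho = \rho(B) > 0$ independent of $Q$.

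Both conditions will be verified through the Schur/Vandermonde factorisation of the minors described after \eqref{eq:grassmanian_map}. Each minor of $M_{\bm{f}}$ with rows indexed by $I \in \indexsets{m}{\tau}$ factors as $\Vand(\bm{f}_I) s_\lambda(\bm{f}_I)$ for some partition $\lambda$, and the analogous identity for $U^h_{\bm{f}}$, with the derivative row $\bm{v}'_h$ replacing some $\bm{v}_i$, produces derivatives of Schur polynomials in $\bm{f}_I$. Since $\vandmin \leq \Vand(\bm{f}) \leq \vandmax$ on $\domf \cap \supp\nu$, these minors are comparable (up to a uniform constant) to symmetric polynomial expressions in $\bm{f}_I$, and their $(C,\alpha)$-goodness is delivered by the symmetric goodness hypothesis~\eqref{eq:condition_minors_d} together with Lemma~\ref{lm:good_function_properties}. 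The diagonal rescaling by $g_Q$ multiplies each such minor by $\prod_{i \in I}(g_Q)_{ii}$, and conditions~\eqref{eq:condition_t_positive_d} (when $m+1 \notin I$) and~\eqref{eq:condition_derivative_bound} (when $m+1 \in I$) are precisely what is needed to guarantee that for each $\tau$ some admissible $I$ produces a Plücker coordinate with sup-norm of the required order of magnitude.

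The main technical obstacle will be extending the clean Schur factorisation of Section~\ref{sec:schur_polynomials} to accommodate the derivative row $\bm{v}'_h$ of $U^h_{\bm{f}}$ and carefully bookkeeping how conditions~\eqref{eq:condition_derivative_bound}--\eqref{eq:condition_t_positive_d} combine to give the Plücker lower bounds uniformly in $\tau$ and across all admissible row selections. Once the single-$h$ estimate is in place, taking a union over $h \in \intrange{m}$ multiplies the final bound only by a constant depending on $n$, completing the proof of the local estimate with the advertised exponent $\alpha/(n+1)$.
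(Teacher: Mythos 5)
Your overall strategy --- recasting \eqref{eq:d_dimensional_inequalities} as a shortest-vector problem for the rescaled lattices $g_Q U^h_{\bm{f}}(\bm{x})\Z^{n+1}$ and feeding the resulting family into a quantitative non-divergence estimate --- is exactly the route the paper takes (it uses \cite[Theorem~2.2]{Klein2008}, stated as Theorem~\ref{thm:km_sublattices}). The gap lies in how you propose to verify the two hypotheses of that estimate. You ask for $(C,\alpha)$-goodness and for the lower bound $\gg \rho^\tau$ for \emph{every} $I \in \indexsets{n}{\tau}$, and you flag as ``the main technical obstacle'' the extension of the Schur factorisation to minors containing the derivative row $\bm{v}'_h$. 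Neither demand can be met from the stated hypotheses: condition~\eqref{eq:condition_minors_d} supplies symmetric goodness only for \emph{one} chosen index set $I$ per rank $\tau$, and it says nothing about expressions involving derivatives of $\bm{f}$, so minors through the row $\bm{v}'_h$ are simply not covered. Your plan therefore stalls exactly at the step you defer.

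The two missing ideas are these. First, for the lower bound only one large Pl\"ucker coordinate is needed, since the relevant quantity is a supremum over components; condition~\eqref{eq:condition_derivative_bound} exists precisely to guarantee that $t_{m+1}$ is bounded below, so that for every $\tau \leq n$ one can choose $I$ with $m+1 \notin I$ and $e^{t_I}$ bounded below. For such $I$ the minor of $U^h_{\bm{f}}$ coincides with a minor of $M_{\bm{f}}$, the clean Vandermonde--Schur factorisation applies, and the non-symmetry hypothesis yields the uniform lower bound over primitive $\bm{w}$ via Lemma~\ref{lm:rational_linear_combination}; the derivative row never has to be factorised at all. (The case $\tau = n+1$ is handled separately by the hypothesis $\det U^h_{\bm{f}} \neq 0$.) Second, for the goodness of the full norm $\bm{x} \mapsto \norm{g_{\bm{t}}U^h_{\bm{f}}(\bm{x})\bm{w}}$ one does not check every component: the chosen component is $(\tilde C,\alpha)$-good by \eqref{eq:condition_minors_d} and is comparable to the norm on $B \cap \supp\nu$, so part~\ref{lm:good_function_properties:ratio} of Lemma~\ref{lm:good_function_properties} transfers goodness from that single component to the norm itself. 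Without these two reductions your argument requires control of derivative minors and of all index sets, which the hypotheses of Theorem~\ref{thm:bound_over_an_interval} do not grant.
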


\begin{proof}[Proof of Theorem~\ref{thm:bound_over_an_interval} given
              Theorem~\ref{thm:local_bound_over_an_interval}]
   Let $\domfgood{\theta} \subset \domf$ be a compact subset such that
   $\nu(\domfgood{\theta}) \geq \nu(\domf)$, which exists because
   $\domf$ is bounded and $\nu$ is Radon. Then note that, since
   $\domf \cap \supp\nu$ is contained in the interior of $\domf$ by hypothesis,
   for every $\bm{x} \in \domfgood{\theta}$ we may find a ball
   $B_{\bm{x}} \ni \bm{x}$ as in Theorem~\ref{thm:local_bound_over_an_interval},
   as well as the respective constants $C_{\bm{x}}$ and $\rho_{\bm{x}}$.
   Hence by compactness there is a finite subset
   $\{x_k\}_{k \in K} \subset \domfgood{\theta}$ such that
   $\{B_{\bm{x}_k}\}_{k \in K}$ is an open cover of
   $\domfgood{\theta}$. Therefore the result follows by observing that
   \[
      \nu\paren{\maxderset^n_{\bm{f}}(Q, \domfgood{\theta})}
      \leq \sum_{k \in K} \nu\paren{\maxderset^n_{\bm{f}}(Q, B_{\bm{x}_k})}
   \]
   and by taking $C = \max_K C_{\bm{x}_k}$ and $\rho = \max_K \rho_{\bm{x}_k}$.
\end{proof}

Through the Dani-Kleinbock-Margulis correspondence between Diophantine
Approximation and flows on homogeneous spaces \cite{Dani1986,KleinM1998}, we
will reinterpret the problem of finding points $x \in \domf$ for which
\eqref{eq:d_dimensional_inequalities} has a solution as a shortest vector
problem. First, we expand \eqref{eq:d_dimensional_inequalities} into the $m+1$
systems of inequalities
\begin{equation}
  \label{eq:split_d_dimensional_inequalities}
  \begin{aligned}
    &\begin{cases}
      \abs{P(f_k(\bm{x}))} < \psi_k(Q) & \text{for } 0 \leq k \leq m \\
      P'(f_h(\bm{x})) \leq \varphi_{m+1}(Q) \\
      \abs{a_k} \leq \varphi_k(Q) & \text{for } m+1 < k \leq n
    \end{cases}
    &\qquad 0 &\leq h \leq m,
  \end{aligned}
\end{equation}
and observe that these can be rewritten in matrix form using the matrices
$U_{\bm{f}}^h$.

However, to be able to view this as a smallest vector problem we also need to
rescale the inequalities.
Consider the scaling matrix
\begin{equation}
  \label{eq:definition_of_g}
  g_{\bm{t}}
  \coloneqq \diag\paren{e^{t_0}, \dotsc, e^{t_m},
                        e^{-t_{m+1}}, \dotsc, e^{-t_n}
                       },
\end{equation}
where $\bm{t} = (t_0,\dotsc,t_n) \in \R^{n+1}$ is such that
\[
  t_{\intrange[0]{m}} = t_{\intrange[m+1]{n}},
\]
and where for every $1 \leq \tau \leq n+1$ and $I \in \indexsets{n}{\tau}$ we
defined
\begin{equation}
  \label{eq:notation_t_range}
  t_I = \sum_{i \in I} t_i.
\end{equation}
Then we need $\delta = \delta(Q) > 0$ such that
\begin{equation}
  \label{eq:definition_of_delta}
  \begin{cases}
    \delta = e^{t_k} \psi_k(Q) & \text{for } 0 \leq k \leq m \\
    \delta = e^{-t_k} \varphi_k(Q) & \text{for } m < k \leq n,
  \end{cases}
\end{equation}
and multiplying those $n+1$ equations together we see that
\begin{equation}
  \label{eq:delta_as_product}
  \delta^{n+1} = \bm{\psi}(Q) \bm{\varphi}(Q).
\end{equation}
Therefore, taking logarithms we may rewrite $t_k$ in terms of $\psi_k$ and
$\varphi_k$, as
\begin{equation}
  \label{eq:parameters_from_approximation_functions}
  \begin{cases}
    t_k = \log\delta - \log\psi_k(Q) & \text{for } 0 \leq k \leq m \\
    t_k = \log\varphi_k(Q) - \log\delta & \text{for } m < k \leq n.
  \end{cases}
\end{equation}
We can now see that \eqref{eq:d_dimensional_inequalities} has a solution
for a given $\bm{x} \in \domf$ if and only if for every $0 \leq h \leq m$ the
lattice $g_{\bm{t}}U_{\bm{f}}^h(\bm{x}) \Z^{n+1}$ has a non-zero vector with
sup-norm at most $\delta$, thus we have indeed reduced to a shortest vector
problem. In other words,
\[
  \maxderset^n_{\bm{f}}(Q, B)
  = \left\{\bm{x} \in B :
           \lambda\paren{g_{\bm{t}}U_{\bm{f}}^h(\bm{x}) \Z^{n+1}}
           < \paren*[\big]{\bm{\psi}(Q) \bm{\varphi}(Q)}^{\frac{1}{n+1}}
    \right\},
\]
where $\lambda(\Gamma) = \inf_{v \in \Gamma \setminus \{0\}} \norm{v}$ denotes
the length of the shortest vector in a discrete subgroup
$\Gamma \subset \R^{n+1}$.

\begin{remark}
  \label{rmk:lower_bound_t}
  Condition~\eqref{eq:cnd:t_positive_d} of
  Theorem~\ref{thm:bound_over_an_interval} is equivalent to asking that for
  every $1 \leq \tau \leq m+1$ there is a choice of $I \in \indexsets{m}{\tau}$
  such that $t_I = t_I(Q)$ is bounded below. Indeed,
  by~\eqref{eq:parameters_from_approximation_functions}
  \begin{align*}
    t_I &= \tau \log \delta - \sum_{i \in I} \log \psi_i(Q) \\
        &= \frac{\tau}{n+1} \log\paren{\bm{\psi}(Q)\bm{\varphi}(Q)}
           - \log \psi_I(Q) \\
        &\geq c
  \end{align*}
  precisely when $\psi(Q)\varphi(Q) \geq e^c \psi_I(Q)^{\frac{n+1}{\tau}}$.
\end{remark}

\begin{example}
  \label{ex:original_case}
  Let $d = m = 1$, as in the context of
  Theorem~\ref{thm:bernik_goetze_gusakova}. Furthermore, let
  $\psi_0(Q) = \psi_1(Q) = Q^{-\frac{n-1}{2}}$,
  $\varphi_2(Q) = \varepsilon^{n+1} Q$
  and $\varphi_3(Q) = \dotsb = \varphi_n(Q) = Q$. Then by
  \eqref{eq:delta_as_product} we have $\delta = \varepsilon$.
  Moreover, the
  equations~\eqref{eq:parameters_from_approximation_functions} become
  \begin{equation}
    \label{eq:parameters_original_case}
    \begin{cases}
      t_k = \log \varepsilon + \frac{n-1}{2} \log Q
      & \text{for } 0 \leq k \leq 1 \\
      t_2 = n\log \varepsilon + \log Q \\
      t_k = \log Q - \log \varepsilon
      & \text{for } 3 \leq k \leq n.
    \end{cases}
  \end{equation}
  Therefore $t_{\intrange{1}} = 2 \log \varepsilon + (n-1) \log Q$, which in
  particular gives that $t_{\intrange{1}} \geq c$ for
  \[
    \log\varepsilon \geq \frac{\log c}{2} - \frac{n - 1}{2} \log Q,
  \]
  i.e. $\varphi_2(Q) \gg Q^{\frac{3-n^2}{2}}$.
  Then $\bm{\psi}(Q)\bm{\varphi}(Q) \gg Q^{\frac{1-n^2}{2}}$ and we easily see
  that condition~\eqref{eq:cnd:t_positive_d} of
  Theorem~\ref{thm:bound_over_an_interval} is satisfied for all choices of $I$,
  since
  \[
    \bm{\psi}(Q)^{\frac{n+1}{2}} = \psi_0(Q)^{n+1} = \psi_1(Q)^{n+1}
    = Q^{\frac{1-n^2}{2}}.
  \]
\end{example}

The main tool in our proof will be the following Theorem from \cite{Klein2008}.
Here $\primtuples_\tau$ denotes the set of elements
\[
  \bm{w} = \bm{w}_1 \wedge \dotsb \wedge \bm{w}_\tau \in \bigwedge^\tau \Z^{n+1}
\]
where $\{\bm{w}_1,\dotsc,\bm{w}_\tau\}$ is a \emph{primitive $\tau$-tuple}, i.e.
it can be completed to a basis of $\Z^{n+1}$. Furthermore, $\norm{\cdot}$ will
denote both the sup-norm and the norm it induces on $\bigwedge \R^{n+1}$.

\begin{note}
  The elements of $\primtuples_\tau$ can be identified with the primitive
  subgroups of $\Z^{n+1}$ of rank $\tau$, i.e. those non-zero subgroups
  $\Gamma \subseteq \Z^{n+1}$ of rank $\tau$ such that
  $\Gamma = \Gamma_\R \cap \Z^{n+1}$, where $\Gamma_\R$ denotes the linear
  subspace generated by $\Gamma$ in $\R^{n+1}$.
  Therefore, up to a sign they can also be identified with the rational
  $\tau$-dimensional subspaces of $\R^{n+1}$.
\end{note}

\begin{theorem}[{\cite[Theorem~2.2]{Klein2008}}]
  \label{thm:km_sublattices}
  Fix $n,N \in \N$ and $\tilde{C}, D, \alpha, \rho > 0$. Given an
  $N$-Besicovitch metric space $X$, let $B$ be a ball in $X$ and $\nu$ be a
  measure which is $D$-Federer on $\tilde{B} = 3^{n+1}B$.
  Suppose that $\eta \colon \tilde{B} \to \GL_{n+1}(\R)$ is a map such that for
  every $1 \leq \tau \leq n+1$ and for every $\bm{w} \in \primtuples_\tau$:
  \begin{enumerate}
    \item \label{lst:cnd:km_sublattices_goodness}
      the function $\bm{x} \mapsto \norm{\eta(\bm{x})\bm{w}}$ is
      $(\tilde{C},\alpha)$-good on $\tilde{B}$ with respect to $\nu$, and
    \item \label{lst:cnd:km_sublattices_rho}
      $\norm{\eta(\cdot)\bm{w}}_{\nu,B} \geq \rho^\tau$.
  \end{enumerate}
  Then for any $0 < \delta \leq \rho$ we have
  \[
    \nu \left( \left\{ \bm{x} \in B :
                       \lambda\left( \eta(\bm{x})\Z^{n+1} \right) < \delta
               \right\}
        \right)
    \leq C \left( \frac{\delta}{\rho} \right)^\alpha \nu(B)
  \]
  with $C = (n+1)\tilde{C}(ND^2)^{n+1}$.
\end{theorem}

\begin{note}
  In light of Lemma~\ref{lm:good_function_properties}, we may extend this to
  $\delta > \rho$ as well, since we may always exchange $\tilde{C}$ with
  $\max\{\tilde{C}, (n+1)^{-1}(ND^2)^{-n-1}\}$, so that $C \geq 1$.
\end{note}

For our purposes we would like to take
$\eta(\bm{x}) = g_{\bm{t}} U^h_{\bm{f}}(\bm{x})$, and to show that it satisfies
hypotheses \ref{lst:cnd:km_sublattices_goodness} and
\ref{lst:cnd:km_sublattices_rho} we will need the following Lemma.
Here, for each $I \subseteq \indexsets{n}{\tau}$ we will denote
by $\bm{e}_I$ the standard basis element
$\bm{e}_{i_1} \wedge \dotsm \wedge \bm{e}_{i_\tau}$ of
$\bigwedge^\tau \R^{n+1}$.

\begin{lemma}
  \label{lm:exterior_product_components}
  Let $\bm{w} = \bm{w}_1 \wedge \dotsm \wedge \bm{w}_\tau \in \primtuples_\tau$
  and let $A$ be an $(n+1) \times (n+1)$ matrix. Then, for every
  $I \subseteq \indexsets{n}{\tau}$, the component of $A\bm{w}$
  corresponding to $\bm{e}_I$ is an integer linear combination of the minors
  $\minor{A}{I,J}$, where $J$ runs through $\indexsets{n}{\tau}$. Furthermore,
  the coefficients are inrependent from $I$ and not all zero.
  \begin{proof}
    Let $W = (\bm{w}_1 | \dotsb | \bm{w}_\tau)$ be the matrix obtained by
    juxtaposition of the vectors $\bm{w}_1,\dotsc,\bm{w}_\tau$, and recall the
    well-known fact that the $\bm{e}_I$ component of $\bm{w}$ is just the
    $\tau \times \tau$ minor $\minor{W}{I,\intrange*{\tau}}$
    of $W$ (see e.g.  \cite[Chapter~10, Section~3]{ShafaR2013}), where
    $\intrange*{\tau} = \{1,\dotsc,\tau\}$. Now observe that
    \[
      A\bm{w} = (A\bm{w}_1) \wedge \dotsm \wedge (A\bm{w}_\tau)
      = AW (\bm{e}_1 \wedge \dotsm \wedge \bm{e}_\tau)
    \]
    and that $(A\bm{w}_1 | \dotsb | A\bm{w}_\tau) = AW$. Finally, the statement
    follows by the Cauchy-Binet formula (see e.g.
    \cite[Example~10.31]{ShafaR2013} or
    \cite[Cauchy-Binet~Corollary, p.~214]{BroidW1989}), i.e.
    \[
      \minor{AW}{I,\intrange*{\tau}}
      = \sum_{J \in \indexsets{n}{\tau}} \minor{A}{I,J} \minor{W}{J,\intrange*{\tau}}.
      \qedhere
    \]
  \end{proof}
\end{lemma}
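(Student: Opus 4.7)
The plan is to view the lemma as a combination of two standard facts: the interpretation of components of a decomposable $\tau$-vector as Plücker coordinates (i.e.\ $\tau \times \tau$ minors of a matrix of column vectors), and the Cauchy–Binet formula for minors of a product. So the whole proof amounts to setting up notation correctly and tracking indices.

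First I would form the $(n+1) \times \tau$ integer matrix $W = [\bm{w}_1 \mid \dotsb \mid \bm{w}_\tau]$ whose columns are the generating vectors of the primitive tuple. The well-known expansion of $\bm{w}_1 \wedge \dotsm \wedge \bm{w}_\tau$ in the standard basis of $\bigwedge^\tau \R^{n+1}$ says that the coefficient of $\bm{e}_J$ equals the $\tau \times \tau$ minor $\minor{W}{J,\{1,\dotsc,\tau\}}$, which is an integer since $W$ has integer entries.

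Next I would use that exterior power commutes with linear maps, i.e.\ $A\bm{w} = (A\bm{w}_1) \wedge \dotsm \wedge (A\bm{w}_\tau)$, and note that the matrix whose columns are $A\bm{w}_1,\dotsc,A\bm{w}_\tau$ is precisely $AW$. Applying the basis expansion again, the $\bm{e}_I$ component of $A\bm{w}$ is the minor $\minor{AW}{I,\{1,\dotsc,\tau\}}$. The Cauchy–Binet formula then gives
\[
  \minor{AW}{I,\{1,\dotsc,\tau\}}
  = \sum_{J \in \indexsets{n}{\tau}}
    \minor{A}{I,J}\,\minor{W}{J,\{1,\dotsc,\tau\}},
\]
which is exactly the asserted expression, with integer coefficients $\minor{W}{J,\{1,\dotsc,\tau\}}$ that depend on $\bm{w}$ only and not on the choice of $I$.

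Finally I would dispatch the non-vanishing claim: since $\{\bm{w}_1,\dotsc,\bm{w}_\tau\}$ is primitive, in particular the vectors are linearly independent, so $W$ has rank $\tau$ and at least one of its maximal minors is non-zero. There is really no hard step here; the only thing to take care of is to invoke Cauchy–Binet in the right orientation (summing over row-index subsets $J$ of the left-hand factor against column-index subsets of the right-hand factor of size $\tau$), which is straightforward once one has fixed the convention that $W$ is $(n+1) \times \tau$ with the full column index set $\{1,\dotsc,\tau\}$ playing a passive role.
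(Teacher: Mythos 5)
Your proof is correct and follows essentially the same route as the paper's: expand $\bm{w}$ via the minors of $W$, identify the matrix of the images with $AW$, and apply Cauchy--Binet. You even make explicit the non-vanishing of the coefficients (via linear independence of the $\bm{w}_i$), which the paper leaves implicit.
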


It follows that the component of $g_{\bm{t}} U^h_{\bm{f}}(\bm{x})\bm{w}$
corresponding to $\bm{e}_I$ is of the form
\[
  e^{t_I} \sum_{J \in \indexsets{n}{\tau}} c_J \minor{U^h_{\bm{f}}}{I,J}
\]
with $c_J \in \Z$ not all zero and independent from $I$. Therefore we have that
\[
  \norm{g_{\bm{t}} U^h_{\bm{f}}(\bm{x})\bm{w}}
  \gg \abs*[\Bigg]{\sum_{J \in \indexsets{n}{\tau}} c_J \minor{U^h_{\bm{f}}}{I,J}}
\]
as long as $e^{t_I}$ is bounded below, which we know from
Remark~\ref{rmk:lower_bound_t} to be guaranteed by
condition~\eqref{eq:cnd:t_positive_d} of
Theorem~\ref{thm:bound_over_an_interval}.
We can now prove that for every $1 \leq \tau \leq n+1$ the norm of
$g_{\bm{t}} U^h_{\bm{f}}(\bm{x})\bm{w}$ is bounded below uniformly in
$w \in \primtuples_\tau$.

This is straighforward for $\tau = n+1$, since in that case
\[
  \norm*[\big]{g_{\bm{t}} U^h_{\bm{f}}(\bm{x})\bm{w}}
  = \abs{c_{\intrange{n}} \det U^h_{\bm{f}}}
  \geq \abs{\det U^h_{\bm{f}}}
  > 0
\]
by remark~\ref{rmk:non_zero_determinant}. When $\tau \leq n$, on the other hand,
condition~\eqref{eq:cnd:derivative_bound} of
Theorem~\ref{thm:bound_over_an_interval} guarantees that
\[
  t_{m+1} = \sum_{i = 0}^m t_i - \sum_{i = m+2}^n t_i
\]
is bounded below, hence we may always find an index set
$I \in \indexsets{n}{\tau}$ such that $m+1 \notin I$ and $e^{t_I}$ is bounded
below. But then
\[
  \grass[I][U_{\bm{f}}^h]
  = \grass[I][M_{\bm{f}}]
  = \grass[\tilde{I}][M_{\bm{f}}]
\]
for some $\tilde{I} \in \indexsets{m}{\tilde{\tau}}$ and
$1 \leq \tilde{\tau} \leq m+1$. Therefore it is enough to check that for every
such $\tilde{I}$ and $\tilde{\tau}$
\begin{equation}
  \label{eq:grass_bounded_below}
  \norm{\bm{c} \cdot \grass[I][M_{\bm{f}(\bm{x})}]}_{\nu,B}
  = \sup_{\bm{x} \in B \cap \supp\nu}
    \abs{\bm{c} \cdot \grass[I][M_{\bm{f}(\bm{x})}]}
  \gg 1
\end{equation}
uniformly in non-zero integer vectors $\bm{c}$. By
Lemma~\ref{lm:rational_linear_combination}, this can be guaranteed by requiring
that $\bm{f}_I$ is non-symmetric of degree $n + 1 - \tau$, because when
$\Vand(\bm{f})$ is bounded we have
\begin{equation}
  \label{eq:grass_bounded_by_schur}
  \abs{\bm{c} \cdot \grass[I][M_{\bm{f}(\bm{x})}]}
  \gg \abs{\bm{c} \cdot \Schur[n,\tau][\bm{f}_I(\bm{x})]}
\end{equation}
and from Proposition~\ref{prop:schur_linear_basis} we know that the components
of $\Schur[n,\tau][\bm{T}] = \SchurD{\tau}{n+1-\tau}{\bm{T}}$ form a basis for
the module of symmetric polynomials in $\tau$ variables and degree bounded by
$n + 1 - \tau$.

\begin{lemma}
  \label{lm:kernel_linearly_independent_over_Q}
  Given a continuous map $\bm{g} = (g_0,\dotsc,g_r) \colon \domf \to \R^r$,
  let $\tilde{\bm{g}} = (\tilde{g}_0,\dotsc,\tilde{g}_{\tilde{r}})$ be a basis
  for the linear span $\linspan[\R]{g_0,\dotsc,g_r}$ and let $R$ be the real
  matrix such that $\tilde{\bm{g}} R = \bm{g}$. Then
  $\ker(R) \cap \Q^{r+1} = \{\bm{0}\}$ if and only if $g_0,\dotsc,g_r$ are
  linearly independent over $\Q$.
  \begin{proof}
    The components of $\bm{g}$ are linearly dependent over $\Q$ if and only if
    there is a non-zero $\bm{q} \in \Q^{r+1}$ such that
    \[
      0 = \bm{g} \cdot \bm{q} = \tilde{\bm{g}} R \bm{q},
    \]
    but then it must be that $\bm{q} \in \ker(R)$, since by hypothesis the
    components of $\tilde{\bm{g}}$ are linearly independent over $\R$.
  \end{proof}
\end{lemma}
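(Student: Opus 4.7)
The plan is to establish the equivalence directly from the definitions, via a single chain of if-and-onlys.  First I would record the key consequence of $\tilde{\bm{g}}$ being a basis of $\linspan[\R]{g_0,\dotsc,g_r}$: its components are $\R$-linearly independent, so for any $\bm{v} \in \R^{\tilde{r}+1}$ the relation $\tilde{\bm{g}}\,\bm{v} = 0$ forces $\bm{v} = \bm{0}$.  This is the only non-trivial input, and it comes for free from the hypothesis.

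Next I would take an arbitrary $\bm{q} \in \Q^{r+1}$ and use the factorization $\bm{g} = \tilde{\bm{g}}\,R$ to rewrite the $\Q$-linear relation $\bm{g} \cdot \bm{q} = 0$ as $\tilde{\bm{g}}(R\bm{q}) = 0$.  Applying the preliminary observation with $\bm{v} = R\bm{q} \in \R^{\tilde{r}+1}$ gives $R\bm{q} = \bm{0}$, i.e.\ $\bm{q} \in \ker R$; conversely $\bm{q} \in \ker R$ immediately yields $\bm{g} \cdot \bm{q} = \tilde{\bm{g}}\,R\,\bm{q} = 0$.  Thus for rational vectors the two conditions $\bm{g} \cdot \bm{q} = 0$ and $\bm{q} \in \ker R$ coincide.

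Putting these equivalences together, the components $g_0,\dotsc,g_r$ fail to be linearly independent over $\Q$ precisely when there exists a non-zero $\bm{q} \in \Q^{r+1}$ with $\bm{q} \in \ker R$, i.e.\ precisely when $\ker(R) \cap \Q^{r+1} \neq \{\bm{0}\}$; the contrapositive is the statement of the lemma.  There is no real obstacle here — continuity of $\bm{g}$ plays no role in the algebraic argument and only enters implicitly to make sense of the basis $\tilde{\bm{g}}$ and the factorization $\bm{g} = \tilde{\bm{g}}\,R$ in the ambient function space.
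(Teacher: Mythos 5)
Your argument is correct and is essentially identical to the paper's proof: both rewrite a rational linear relation $\bm{g}\cdot\bm{q}=0$ as $\tilde{\bm{g}}(R\bm{q})=0$ and invoke the $\R$-linear independence of the components of $\tilde{\bm{g}}$ to conclude $R\bm{q}=\bm{0}$, with the converse being immediate. Your write-up merely makes the two directions of the equivalence slightly more explicit than the paper does.
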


\begin{lemma}
  \label{lm:rational_linear_combination}
  Let $\bm{g} = (g_1,\dotsc,g_r) \colon \domf \to \R^r$ be a continuous map with
  components linearly independent over $\Q$. Then there is a $\rho > 0$ such
  that $\norm{g}_{\nu,B} \geq \rho$ for every integer linear combination $g$ of
  the components of $\bm{g}$.
  \begin{proof}
    Let $\tilde{\bm{g}}$ be a basis for the linear span
    $\linspan[\R]{g_0,\dotsc,g_r}$. Further, let $\sphere[r]$ be the unit sphere
    in $\R^{r+1}$ and note that if $\bm{b} \in \Z^r \setminus \{0\}$, then
    $\tilde{\bm{b}} \coloneqq \frac{\bm{b}}{\norm{\bm{b}}}
     \in \sphere[r] \cap \Q^{r+1}$.
    Therefore
    \[
      \min_{\bm{b} \in \Z^r \setminus \{0\}}
        \norm{\bm{g} \cdot \bm{b}}_{\nu,B}
      \geq \min_{\tilde{\bm{b}} \in \sphere[r]}
        \norm*{\bm{g} \cdot \tilde{\bm{b}}}_{\nu,B}
      = \min_{\tilde{\bm{b}} \in \sphere[r]}
        \norm*{\tilde{\bm{g}} \cdot (R\tilde{\bm{b}})}_{\nu,B}
      \eqqcolon \rho,
    \]
    which is well defined since $\tilde{\bm{g}} R \tilde{\bm{b}}$ is continuous
    in $\tilde{\bm{b}}$ and $\sphere[r]$ is compact. Finally,
    Lemma~\ref{lm:kernel_linearly_independent_over_Q} implies that $\rho > 0$.
  \end{proof}
\end{lemma}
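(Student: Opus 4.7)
The plan is to reduce the uniform bound over integer linear combinations to a compactness argument on the unit sphere of $\R^r$. Given $\bm{b} \in \Z^r \setminus \{\bm{0}\}$, the trivial bound $\norm{\bm{b}} \geq 1$ combined with $\R$-linearity yields
\[
  \norm{\bm{g} \cdot \bm{b}}_{\nu,B}
  \geq \norm{\bm{g} \cdot (\bm{b}/\norm{\bm{b}})}_{\nu,B},
\]
so the required $\rho$ is a lower bound for the infimum of the continuous function $\tilde{\bm{b}} \mapsto \norm{\bm{g} \cdot \tilde{\bm{b}}}_{\nu,B}$ over the unit sphere $\sphere[r-1] \subset \R^r$, which is in fact a minimum by compactness. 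Everything therefore reduces to showing that this minimum is strictly positive.

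Since the components $g_i$ are only assumed to be $\Q$-linearly independent, I would next pass to a basis $\tilde{\bm{g}}$ of the real span $\linspan[\R]{g_1,\dotsc,g_r}$ (as functions on $B \cap \supp \nu$) and write $\bm{g} = \tilde{\bm{g}} R$, so that $\bm{g} \cdot \tilde{\bm{b}} = \tilde{\bm{g}} \cdot (R\tilde{\bm{b}})$. The $\R$-linear independence of $\tilde{\bm{g}}$ forces $\norm{\tilde{\bm{g}} \cdot (R\tilde{\bm{b}})}_{\nu,B}$ to vanish exactly when $R\tilde{\bm{b}} = \bm{0}$, while Lemma~\ref{lm:kernel_linearly_independent_over_Q} identifies the $\Q$-independence hypothesis with the statement $\ker R \cap \Q^r = \{\bm{0}\}$. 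Consequently, the quantity of interest is positive at every rational point of the sphere, which already covers every vector of the form $\bm{b}/\norm{\bm{b}}$ that actually arises from the reduction above.

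The main obstacle is precisely that compactness demands positivity on the \emph{whole} sphere, while Lemma~\ref{lm:kernel_linearly_independent_over_Q} only rules out rational vectors in $\ker R$: if $R$ fails to be injective over $\R$, the continuous minimum could be attained at an irrational direction of $\ker R$ and the argument breaks. To close this gap I would either strengthen the hypothesis to $\R$-linear independence of the $g_i$ on $B \cap \supp \nu$ (which makes $R$ injective and so the compactness argument yields $\rho > 0$ directly), or keep the infimum restricted to $\sphere[r-1] \cap \Q^r$ and complement it with a Diophantine estimate — of Minkowski or Liouville type — quantifying how closely an integer vector can approximate a fixed irrational linear subspace of $\R^r$, turning the lack of accumulation points at $\ker R \cap \sphere[r-1]$ into a uniform bound.
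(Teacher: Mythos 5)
Your reduction is the same as the paper's: normalise $\bm{b}$ to the unit sphere using $\norm{\bm{b}} \geq 1$, pass to a real basis $\tilde{\bm{g}}$ of the span with $\bm{g} = \tilde{\bm{g}}R$, and try to conclude by compactness. The ``main obstacle'' you flag is not a defect of your write-up relative to the paper --- it is precisely the step the paper's own proof elides. The printed argument defines $\rho$ as the minimum of $\norm*{\tilde{\bm{g}} \cdot (R\tilde{\bm{b}})}_{\nu,B}$ over the \emph{whole} sphere and then invokes Lemma~\ref{lm:kernel_linearly_independent_over_Q} to claim $\rho > 0$; but that lemma only shows that $\ker R$ contains no nonzero rational vector, whereas the compactness minimum can be attained at an irrational direction of $\ker R$ whenever $R$ fails to be injective over $\R$. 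So you have correctly located a genuine gap in the paper's proof (one the author appears to acknowledge in the warning notes attached to this lemma and to Definition~\ref{def:non_symmetric}).

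Of your two proposed repairs, only the first can work, because the lemma is actually false under mere $\Q$-linear independence. Take $g_1(x) = x$ and $g_2(x) = \sqrt{2}\,x$ on an interval: these are $\Q$-linearly independent, yet $\norm{a g_1 + b g_2}_{\nu,B} = \abs{a + b\sqrt{2}}\,\norm{x}_{\nu,B}$ can be made arbitrarily small over $(a,b) \in \Z^2 \setminus \{0\}$ via continued-fraction approximations to $\sqrt{2}$. Hence no Minkowski- or Liouville-type estimate on the distance from integer vectors to $\ker R$ can close the argument as stated: integer directions \emph{do} accumulate on an irrational hyperplane fast enough to defeat any uniform $\rho$. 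The correct fix is your first option --- assume the components of $\bm{g}$, restricted to $B \cap \supp\nu$, are linearly independent over $\R$, so that $R$ is injective and compactness alone yields $\rho > 0$; correspondingly, the non-symmetry hypothesis that feeds into this lemma should be imposed for real rather than rational symmetric polynomials.
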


Having shown that $\eta(\bm{x}) = g_{\bm{t}} U^h_{\bm{f}}(\bm{x})$ satisfies
condition~\ref{lst:cnd:km_sublattices_rho} of
Theorem~\ref{thm:km_sublattices}, we note that \eqref{eq:grass_bounded_by_schur}
implies that $\eta$ satisfies condition~\ref{lst:cnd:km_sublattices_goodness} as
well.
Indeed, write $\varpi$ for
$\norm*[\big]{g_{\bm{t}} U^h_{\bm{f}}(\bm{x})\bm{w}}_{\nu,B}$ and $\varpi_I$ for
the component of $g_{\bm{t}} U^h_{\bm{f}}(\bm{x})\bm{w}$ corresponding to
$\bm{e}_I$. Since $B$ is bounded and $\varpi,\varpi_I$ are continuous,
\eqref{eq:grass_bounded_below} implies that $\varpi \leq c \varpi_I$ on
$B \cap \supp\nu$ for some $c > 0$. Furthermore,
\eqref{eq:grass_bounded_by_schur} shows that $\varpi_I$ is $(C, \alpha)$-good on
$B$ with respect to $\nu$, since $(\Schur[][\bm{f}_I], \nu)$ is
$(C, \alpha)$-good by hypothesis. Therefore by
Lemma~\ref{lm:good_function_properties:ratio} we have that $\varpi$ is
$(c^\alpha C, \alpha)$-good on $B$ with respect to $\nu$.

This concludes the proof of Theorem~\ref{thm:local_bound_over_an_interval}.

\begin{proof}[Proof of Corollary~\ref{cor:metric_convergence}]
  Note that the first part is just a special case of
  Corollary~\ref{cor:analytic_nondegenerate}.
  Then for each integer $k > 1$ apply Theorem~\ref{thm:bound_over_an_interval}
  with $\theta_k = 1 - \frac{1}{k}$, resulting in a sequence of subsets
  $\domfgood{k} \subset \domf$ with $\nu(\domfgood{k}) > \theta_k \nu(\domf)$
  and such that
  \[
    \nu\paren{\maxderset^n_{\bm{f}}(Q, \domfgood{k})} \ll_k
    \left( \bm{\psi}(Q) \bm{\varphi}(Q) \right)^\frac{\alpha}{n+1}
    \nu(\domfgood{k})
  \]
  for $Q$ large enough, where the implied constant is independent of $Q$.
  Therefore by condition~\eqref{eq:cnd:metric_convergence} we have
  \[
    \sum_{Q = 1}^{\infty}
      \nu\paren{\maxderset^n_{\bm{f}}(Q, \domfgood{k})}
    \ll \nu(\domfgood{k})
      \sum_{Q = 1}^{\infty}
        \paren*[\big]{ \bm{\psi}(Q)\bm{\varphi}(Q) }^\frac{\alpha}{n+1}
    < \infty
  \]
  and by the Borel-Cantelli Lemma this implies that
  $\nu\paren*[\big]{\maxderset^n_{\bm{f}}(\domfgood{k})} = 0$.

  Now observe that for every $k, Q > 1$ we have
  $\maxderset^n_{\bm{f}}(Q, \domfgood{k})
  \subseteq \maxderset^n_{\bm{f}}(Q, \domf)$,
  hence
  $\maxderset^n_{\bm{f}}(\domfgood{k})
  \subseteq \maxderset^n_{\bm{f}}(\domf)$. Thus
  \begin{align*}
  \nu\paren*[\big]{\maxderset^n_{\bm{f}}(\domf)}
    &\leq \nu\paren{\maxderset^n_{\bm{f}}(\domf)
                    \setminus \maxderset^n_{\bm{f}}(\domfgood{k})}
          + \nu\paren{\maxderset^n_{\bm{f}}(\domfgood{k})} \\
    &=    \nu\paren{\maxderset^n_{\bm{f}}(\domf)
                    \setminus \maxderset^n_{\bm{f}}(\domfgood{k})} \\
    &\leq \nu\paren{\domf \setminus \domfgood{k}} \\
    &=    \nu\paren{\domf} - \nu\paren{\domfgood{k}} \\
    &\leq \frac{1}{k} \nu\paren{\domf} \to 0
  \end{align*}
  as $k \to \infty$, and we conclude that
  $\nu\big(\maxderset^n_{\bm{f}}(\domf)\big) = 0$, as required.
\end{proof}

\section{Final remarks}%
\label{sec:final_remarks}

There is a notable gap between the hypotheses of
Theorem~\ref{thm:bernik_goetze_gusakova} and those of
Theorem~\ref{thm:lower_bound_for_manifold_count}.
For example, when $\bm{f}$ is a polynomial map our theorem only applies to at
most finitely many values of $n$. It would therefore be interesting to explore
the limit of the techniques presented in this paper, and a possible approach
would be to adapt the work of Aka, Breuillard, Rosenzweig, and de Saxc\'{e}
\cite{AkaBRS2018} to determine the precise obstruction to the applicability of
Theorem~\ref{thm:km_sublattices} to the present problem.

We also note that Theorem~\ref{thm:bound_over_an_interval} suggests that the
volume of the approximation targets plays a greater role than the length of
their sides in determining whether a certain rate of approximation is achievable
or not. In other words, we conjecture the following improvement of
Proposition~\ref{prop:manifold_khincin_point_divergence} for the set
$\dualappr_{n,\bm{f}}^*(\psi_0,\dotsc,\psi_m)$ of points $\bm{x} \in \domf$ such
that $\bm{f}(\bm{x}) \in \dualappr_{n,m+1}^*(\psi_0,\dotsc,\psi_m)$, where
the latter is the set of $\bm{x} \in \R^{m+1}$ such that
\[
  \abs{x_k - \alpha_k} < \pointapproxf[\psi_k]{\polyheight{\bm{\alpha}}}
\]
for infinitely many $\bm{\alpha} \in \Alg{m+1}{n}$.

\begin{conjecture}
  \label{conj:manifold_khincin_point_divergence}
  Let $\psi_0,\dotsc\psi_m \colon \R^+ \to \R^+$ be decreasing functions
  such that $\psi_i \in O(\psi_j)$ for every $0 \leq i < d$ and
  $d \leq j \leq m$, and suppose that there is a $\kappa > 0$ such that
  \[
    \kappa^{n-m+d} >
    \lim_{t \to \infty} \frac{\psi_d(\kappa^t) \dotsm \psi_m(\kappa^t)}
                             {\psi_d(\kappa^{t+1}) \dotsm \psi_m(\kappa^{t+1})}.
  \]
  Further, let $g$ be a dimension function such that $r^{-d}g(r)$ is
  non-increasing, and assume that $\bm{f}$ is Lipschitz continuous,
  that $\Vand(\bm{f}) \neq 0$, and
  that $\bm{f}$ satisfies condition~\eqref{eq:cnd:minors_d} on $\domf$.
  Then
  \[
    \Haus{\dualappr_{n,\bm{f}}^*(\psi_0,\dotsc,\psi_m)} =
    \begin{cases}
      0            & \text{if } S_{n,d}^g(\psi_0,\dotsc,\psi_m) < \infty \\
      \Haus{\domf} & \text{if } S_{n,d}^g(\psi_0,\dotsc,\psi_m) = \infty
    \end{cases}
  \]
  where
  \[
    S_{n,d}^g(\psi_0,\dotsc,\psi_m) \coloneqq
    \sum_{Q = 1}^{\infty}
      Q^n \frac{\psi_d(Q)\dotsm\psi_m(Q)}{Q^{m+1-d}}
      g\paren{\frac{\psi_0(Q)\dotsm\psi_{d-1}(Q)}{Q^d}}.
  \]
\end{conjecture}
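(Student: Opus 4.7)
The plan is to split the conjecture into its convergence and divergence halves, handling each by a weighted variant of the arguments already used for Proposition~\ref{prop:manifold_khincin_point_divergence} and Lemma~\ref{lm:manifold_khinchin_point_convergence}.

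For the convergence half, I would first reduce to dual approximation by polynomials. Using the Mean Value Theorem exactly as in Remark~\ref{rmk:approximation_set_inclusion}, a point $\bm{x} \in \dualappr_{n,\bm{f}}^*(\psi_0,\dotsc,\psi_m)$ gives rise to infinitely many integer polynomials $P$ of degree at most $n$ with $\abs{P(f_k(\bm{x}))} \ll \psi_k(\polyheight{P})$ for all $k$, so $\dualappr_{n,\bm{f}}^*(\psi_0,\dotsc,\psi_m)$ is contained (up to constants) in the set of $\bm{x}$ whose image $\bm{f}(\bm{x})$ lies in infinitely many boxes of measure $\prod_k \psi_k(Q)$ anchored at algebraic points of height $Q$. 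A standard Borel--Cantelli covering argument, summing Lebesgue measures of such boxes along the manifold (picking up a factor $\psi_d(Q)\dotsm\psi_m(Q)/Q^{m+1-d}$ from the transverse directions) and covering the $d$-dimensional slice by $\HausdorffM^g$-balls of radius $\psi_0(Q)\dotsm\psi_{d-1}(Q)/Q^d$ (which is legitimate because $r \mapsto r^{-d} g(r)$ is non-increasing), yields that $S_{n,d}^g < \infty$ forces $\Haus{\dualappr_{n,\bm{f}}^*(\psi_0,\dotsc,\psi_m)} = 0$.

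For the divergence half, I would use the ubiquity framework as in Section~\ref{sec:ubiquity}, but with \emph{rectangular} neighbourhoods in place of balls. Fix $\kappa > 1$ satisfying the hypothesis, and take the resonant set and weight exactly as in the proof of Proposition~\ref{prop:manifold_khincin_point_divergence}, namely
\[
  J = \Big\{ \bm{\alpha} \in \Alg{m+1}{n} : \hat{\bm{\alpha}} \in \Omega,\ \max_{d \leq k \leq m} \abs{f_k(\hat{\bm{\alpha}}) - \alpha_k} < \tfrac{1}{2}\,\psi_k(\polyheight{\bm{\alpha}})/\polyheight{\bm{\alpha}} \Big\},
\]
with $R_{\bm{\alpha}} = \hat{\bm{\alpha}}$ and $\beta_{\bm{\alpha}} = \polyheight{\bm{\alpha}}$. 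For the ubiquity scale I would set $\rho_k(Q) = \psi_k(Q)/Q$ for $0 \leq k < d$ and apply Corollary~\ref{cor:n+1_points} with $\psi_k(Q)$ as given for $d \leq k \leq m$, $\varphi_{m+1}(Q) = \dotsb = \varphi_n(Q) = Q$, so that $\bm{\psi}(Q)\bm{\varphi}(Q) = \prod_{k=0}^{d-1} \psi_k(Q) \cdot \prod_{k=d}^m \psi_k(Q) \cdot Q^{n-m}$ is bounded above and below (this is where the $Q^n$ versus $Q^{m+1-d}$ normalisation in $S_{n,d}^g$ comes from), and the condition $\psi_k(Q)/Q$ decreasing follows from $\psi_k$ decreasing. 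Corollary~\ref{cor:n+1_points} then supplies, for each $\bm{x}$ in a set of measure $\gtrsim \Leb{\Omega}$, an $\bm{\alpha}$ of height $\ll \kappa^t$ with $\abs{x_k - \alpha_k} \ll \psi_k(\kappa^t)/\kappa^t$ in every direction, which is precisely the anisotropic local ubiquity statement required. Finally, I would invoke a weighted ubiquity theorem (such as the rectangular version of Theorem~\ref{thm:ubiquity_divergence} due to Beresnevich--Velani, with the dimension function $g$ measuring the $d$-dimensional slice) to turn divergence of $S_{n,d}^g$ into full $\HausdorffM^g$-measure. The growth condition on $\prod_{k=d}^m \psi_k$ relative to $\kappa^{n-m+d}$ is exactly the hypothesis \eqref{eq:condition_rho_regular} in disguise.

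The principal obstacle is the last ingredient: Theorem~\ref{thm:ubiquity_divergence} as stated applies only to approximation by \emph{balls} of radius $\approxfubq(\kappa^t)$, whereas Conjecture~\ref{conj:manifold_khincin_point_divergence} genuinely requires approximation by rectangles whose side lengths $\psi_k(Q)/Q$ may be of very different orders of magnitude for different $k$. The reduction we used in Proposition~\ref{prop:manifold_khincin_point_divergence}, where a single function $\psi$ controls all $k < d$, is not available here. Extending the local-ubiquity-plus-Hausdorff-measure machinery of \cite{BeresV2009} to rectangular targets is technically involved: one needs a weighted analogue of the $\approxfubq$/$\rho$ interplay and a compatible covering lemma. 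Outside of that extension, which is the conceptual heart of the problem, every other step is an adaptation of arguments already present in Sections~\ref{sec:conjugate_coordinates}--\ref{sec:ubiquity}.
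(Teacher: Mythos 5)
The statement you are addressing is Conjecture~\ref{conj:manifold_khincin_point_divergence}, which the paper explicitly leaves open: there is no proof in the paper to compare against, so the only question is whether your proposal actually closes the problem. It does not, and to your credit you say so yourself. The obstacle you single out --- that Theorem~\ref{thm:ubiquity_divergence} handles only isotropic targets $B(R_{\bm{\alpha}}, \approxfubq(\beta_{\bm{\alpha}}))$, while the conjecture requires rectangular targets whose side lengths $\psi_k(Q)/Q$ may have different orders of magnitude --- is exactly the missing ingredient, and no rearrangement of Corollary~\ref{cor:n+1_points} supplies it. What Corollary~\ref{cor:n+1_points} does give you is the anisotropic local density of resonant points, as you correctly observe; the passage from there to full $\HausdorffM^g$-measure of the limsup set is the content of a weighted mass-transference or rectangular ubiquity theorem that is not in the paper's toolbox. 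So the divergence half remains open under your proposal, and your sketch is an outline of a strategy rather than a proof.

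Two further points. First, even granting a rectangular ubiquity theorem, you would need to check that its divergence sum matches $S_{n,d}^g$ as defined: when $\psi_0 = \dotsb = \psi_{d-1} = \psi$ the conjectured sum carries $g\big((\psi(Q)/Q)^d\big)$, whereas Proposition~\ref{prop:manifold_khincin_point_divergence} carries $g(\psi(Q)/Q)$, and these normalisations coincide only for $d = 1$; any candidate weighted theorem must be calibrated against this discrepancy rather than assumed to agree. Second, your convergence half is also incomplete: a box with side lengths $r_0, \dotsc, r_{d-1}$ is not covered by a single ball of radius $r_0 \dotsm r_{d-1}$, and the natural cover by roughly $\prod_k (r_k / \min_k r_k)$ balls of radius $\min_k r_k$ contributes $\prod_k (r_k / \min_k r_k)\, g(\min_k r_k)$ to $\HausdorffM^g_\rho$, which need not be comparable to $g(r_0 \dotsm r_{d-1})$ for a general dimension function $g$. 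The paper only establishes the convergence statement for Lebesgue measure (Lemma~\ref{lm:manifold_khinchin_point_convergence}, via Theorem~\ref{thm:independent_polynomial_khinchin}), and upgrading it to $\HausdorffM^g$ for the conjectured sum is itself part of what the conjecture asks.
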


Furthermore, observe that a version of \cite[Lemma~4.6]{DasFSU2018} for flows
$\domf \to \GL_{n+1}(\R)$ would allow us to extend
Theorem~\ref{thm:bound_over_an_interval} to more general measures.
In this spirit and motivated by \cite{KleinLW2004, Weiss2002, PolliV2005}, as well as
recent work by Khalil and Luethi, we propose the following:

\begin{conjecture}
  Let $\bm{f} \colon \domf \subseteq \R^d \to \R^{m+1}$ be a continuous map, and
  let $\nu$ be a measure on $\domf$ such that
  $(\Schur^{n-d} \circ \bm{f})_* \nu$ is Federer, decaying, and rationally
  non-planar. Also let $\psi \colon \R^+ \to \R^+$ be a decreasing function.
  Then for any ball $B \subseteq \domf$
  \[
    \nu\paren{\dualappr_{n,\bm{f}}^*(\psi) \cap B} = \nu(B)
    \quad \text{if} \quad
    \sum_{Q = 1}^\infty Q^{n-m-1} \psi^{m+1}(Q) = \infty.
  \]
\end{conjecture}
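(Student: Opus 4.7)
The plan is to adapt the strategy of Section~\ref{sec:ubiquity} to the general measure $\nu$ described in the conjecture. The backbone of that section is Proposition~\ref{prop:manifold_khincin_point_divergence}, whose proof uses three ingredients: Corollary~\ref{cor:n+1_points} to produce many tailored algebraic conjugate points near $\bm{f}(\bm{x})$ on a subset of large measure, the local ubiquity framework of Lemma~\ref{lm:ubiquitous_system}, and the divergence form of ubiquity (Theorem~\ref{thm:ubiquity_divergence}). Each of these must be upgraded from the Lebesgue measure on $\R^d$ (with a symmetrically good parametrisation) to a general $\nu$ on $\domf$ such that $(\Schur^{n-d} \circ \bm{f})_* \nu$ is Federer, decaying, and rationally non-planar.

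The first task is to extend Theorem~\ref{thm:bound_over_an_interval} to this broader class of measures. The proof of Theorem~\ref{thm:local_bound_over_an_interval} applies Theorem~\ref{thm:km_sublattices} to $\eta(\bm{x}) = g_{\bm{t}} U^h_{\bm{f}}(\bm{x})$, and the two conditions to verify are the $(\tilde{C},\alpha)$-goodness of $\bm{x} \mapsto \norm{\eta(\bm{x})\bm{w}}$ with respect to $\nu$, and the uniform lower bound $\norm{\eta(\cdot)\bm{w}}_{\nu,B} \geq \rho^\tau$ for every $\bm{w} \in \primtuples_\tau$. By Lemma~\ref{lm:exterior_product_components}, \eqref{eq:grass_bounded_by_schur}, and Proposition~\ref{prop:schur_linear_basis}, both reduce to statements about rational linear combinations of the components of $\Schur^{n-d}(\bm{f})$ restricted to $B$. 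The $(C,\alpha)$-decay of $(\Schur^{n-d} \circ \bm{f})_* \nu$ delivers $\nu$-goodness of such combinations via the standard correspondence between decay of a pushforward and $(C,\alpha)$-goodness of its coordinates, while rational non-planarity plays the role of symmetric independence (compare Corollary~\ref{cor:symmetric_goodness_equivalence}) and yields the uniform lower bound through the pushforward analogue of Lemma~\ref{lm:rational_linear_combination}. The only genuinely new ingredient required is a version of Theorem~\ref{thm:km_sublattices} in which $\nu$ is not itself assumed to be Federer on $\domf$, and this is precisely where an adaptation of \cite[Lemma~4.6]{DasFSU2018} to flows $\domf \to \GL_{n+1}(\R)$ would enter.

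Granted such an extension, the analogue of Corollary~\ref{cor:n+1_points} follows by the same Minkowski--Bertrand--Eisenstein construction of Section~\ref{sec:tailored_polynomials}, and the ubiquity set-up of Lemma~\ref{lm:ubiquitous_system} transfers with Lebesgue replaced by $\nu$: taking $\psi_k(Q) = Q \rho(Q)$ for $0 \leq k < d$ and $\psi_k(Q) = \psi(Q)$ for $d \leq k \leq m$ produces a locally $\nu$-ubiquitous system of resonant points $\hat{\bm{\alpha}}$ with weights $\polyheight{\bm{\alpha}}$, for an appropriate ubiquitous function $\rho$ satisfying a regularity condition analogous to \eqref{eq:condition_rho_regular}. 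The inclusion $\lsset(\approxfubq) \subseteq \dualappr_{n,\bm{f}}^*(\psi)$ is then established exactly as before via Lemma~\ref{lm:lipschitz_bound}, which applies on each ball $B$ by continuity together with the fact that one may shrink $B$ without affecting the conclusion.

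The principal obstacle is the final divergence step, as Theorem~\ref{thm:ubiquity_divergence} is stated for the Lebesgue measure on $\R^d$ whereas we need a $\nu$-version. In the spirit of the comments preceding the conjecture, this amounts to proving a measure-theoretic ubiquity theorem under decay hypotheses on a nonlinear pushforward of $\nu$, an extension which is not currently available in full generality but which should be within reach for, say, friendly measures on sufficiently regular self-similar sets, by combining \cite{KleinLW2004} and \cite{BeresV2009} with recent work on effective equidistribution for unipotent flows. Once such a $\nu$-ubiquity theorem is in place, the divergence of $\sum Q^{n-m-1} \psi^{m+1}(Q)$ translates to the divergence of the corresponding ubiquity series via Cauchy condensation exactly as in the closing display of the proof of Proposition~\ref{prop:manifold_khincin_point_divergence}, yielding $\nu\paren{\dualappr_{n,\bm{f}}^*(\psi) \cap B} = \nu(B)$.
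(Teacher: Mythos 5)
The statement you are addressing is posed in the paper as an open conjecture, not a theorem: the author explicitly motivates it by noting that a version of \cite[Lemma~4.6]{DasFSU2018} for flows $\domf \to \GL_{n+1}(\R)$ \emph{would} allow Theorem~\ref{thm:bound_over_an_interval} to be extended to such measures, and then ``proposes'' the conjecture. Your submission is a roadmap rather than a proof, and you concede as much at the two decisive points. First, your extension of Theorem~\ref{thm:bound_over_an_interval} rests on a version of Theorem~\ref{thm:km_sublattices} in which $\nu$ itself need not be Federer on $\domf$ and in which the goodness and non-planarity hypotheses are imposed only on the pushforward $(\Schur^{n-d} \circ \bm{f})_* \nu$; you identify this as ``precisely where an adaptation of \cite[Lemma~4.6]{DasFSU2018} would enter'' but do not carry out that adaptation, and the transfer of decay of a nonlinear pushforward back to $(C,\alpha)$-goodness of the functions $\bm{x} \mapsto \norm{\eta(\bm{x})\bm{w}}$ with respect to $\nu$ on $\domf$ is exactly the nontrivial content being conjectured. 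Second, the divergence step requires a $\nu$-measure analogue of Theorem~\ref{thm:ubiquity_divergence}, which you acknowledge is ``not currently available in full generality''; without it the ubiquity construction yields nothing.

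Two further points are glossed over even at the level of the roadmap. The conjecture assumes only that $\psi$ is decreasing, with no analogue of the regularity condition \eqref{eq:condition_rho_regular} that Proposition~\ref{prop:manifold_khincin_point_divergence} needs; in the Lebesgue case this condition is removed via \cite[Corollary~2]{BeresDV2006} (see Remark~\ref{rmk:condition_rho_regular}), but no such removal is available for a general $\nu$, so your ``appropriate ubiquitous function $\rho$ satisfying a regularity condition analogous to \eqref{eq:condition_rho_regular}'' imports a hypothesis the conjecture does not grant. Moreover, any divergence Borel--Cantelli argument giving full $\nu$-measure of a limsup set requires quantitative control (doubling, quasi-independence on average) of $\nu$ itself on $\domf$, whereas the conjecture's hypotheses constrain only the pushforward. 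In short, the proposal correctly identifies the architecture one would use and the obstacles one would face --- essentially the same ones the paper cites as the reason the statement remains conjectural --- but it does not close any of them, so it cannot be accepted as a proof.
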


\printbibliography

\bigskip

\noindent
Department of Mathematics\\
University of York\\
York, YO10 5DD\\
United Kingdom\\
E-mail: \texttt{ap1466@york.ac.uk}

\end{document}